\newtheorem{theorem}{Theorem}[section]
 \newtheorem{corollary}[theorem]{Corollary}
 \newtheorem{lemma}[theorem]{Lemma}
 \newtheorem{proposition}[theorem]{Proposition}
 \theoremstyle{definition}
 \newtheorem{definition}[theorem]{Definition}
 \theoremstyle{remark}
 \newtheorem{remark}[theorem]{Remark}
 \newtheorem*{example}{Example}
 \numberwithin{equation}{section}
\DeclareMathOperator{\Alg}{\mathsf{Alg}}
\DeclareMathOperator{\Cyc}{\mathsf{Cyc}}
\DeclareMathOperator{\diag}{diag}
\DeclareMathOperator{\disc}{disc}
\DeclareMathOperator{\Hyplat}{\mathsf{HypLat}}
\DeclareMathOperator{\Lat}{\mathsf{Lat}}
\DeclareMathOperator{\myspan}{span}
\def \one {\boldsymbol{1}}
\DeclareMathOperator{\ran}{ran}
\DeclareMathOperator{\rank}{rank}
\DeclareMathOperator{\srank}{\mbox{$\ast$}-rank}
\def \Re{}
\DeclareMathOperator{\slim}{s-lim}
\DeclareMathOperator{\sdet}{\mbox{$\ast$}-det}
\begin{document}

\title {On the Spectral Analysis of Direct Sums \\
  of Riemann-Liouville Operators\\
  in  Sobolev Spaces of Vector Functions}

%


\maketitle
\begin{abstract}
Let $J_k^\alpha$  be a real power of the integration operator
$J_k$ defined on Sobolev space $W_p^k[0,1]$. We investigate the
spectral properties of the operator $A_k=\bigoplus_{j=1}^n \lambda_j
J_k^\alpha$ defined on $\bigoplus_{j=1}^n W_p^k[0,1]$.\ Namely, we
describe the commutant  $\{A_k\}'$, the double commutant
$\{A_k\}''$ and the algebra  $\Alg A_k$. Moreover, we describe the
lattices $\Lat A_k$ and $\Hyplat A_k$ of invariant and
hyperinvariant subspaces of $A_k$, respectively. We also calculate
the spectral multiplicity $\mu_{A_k}$ of $A_k$ and describe the
set $\Cyc A_k$ of its cyclic subspaces.
In passing, we present a simple counterexample for the implication
\begin{equation*}
\Hyplat(A\oplus B)=\Hyplat A\oplus \Hyplat B\Rightarrow
\Lat(A\oplus B)=\Lat A\oplus \Lat B
\end{equation*}
to be valid.
\end{abstract}

\section
{Introduction}
 \label{intro}
It is well known
 \cite{M.S.Brodskii, I.C.Gohberg and M.G.Krein,B. Sz.-Nagy and C. Foias 1, N.K.Nikolskii}
that the Volterra integration operator $J:\ f(x)\to \int_0^xf(t)\,dt$ as well as its real powers $J^\alpha$
 play an exceptional role in the spectral theory of nonselfadjoint operators in $L_2[0,1]$. The paper is devoted to the spectral analysis of direct sums of multiples of powers $J^\alpha$ of the integration operator $J$ in Sobolev spaces.
To describe its content we first briefly recall basic facts on the integration operator.

It is well known
 \cite{M.S.Brodskii, I.C.Gohberg and M.G.Krein, B. Sz.-Nagy and C. Foias 1, N.K.Nikolskii}
that  $J$  is unicellular on $L_p[0,1]$  for $p\in [1,\infty)$
and the  lattice $\Lat J$ of its invariant subspaces is
anti-isomorphic to the segment $[0,1]$.
  The same is also true (see
  \cite{I.C.Gohberg and M.G.Krein,N.K.Nikolskii})
  for the
  simplest Volterra operators
  \begin{equation*}
     J^{\alpha}:\ f(x)\to
     \int_0^x\frac{(x-t)^{\alpha-1}}
     {\Gamma(\alpha)}f(t)\,dt,\qquad \Re \alpha >0,
  \end{equation*}
  being the positive powers  of the integration operator $J$.

More precisely, it is known  (see \cite{M.S.Brodskii,I.C.Gohberg
and M.G.Krein,B. Sz.-Nagy and C. Foias 1,N.K.Nikolskii}) that
\begin{equation}\label{1point1}
\begin{split}
     \Lat  J^\alpha &=
     \Hyplat   J^\alpha=
     \{E_a:\  a\in [0,1]\},\\
     E_a:&= \{ f\in L_p[0,1] : \ f(x)=0 \ \text{ for a.a. }
\ x\in[0,a]\}.
\end{split}
\end{equation}
Description \eqref{1point1} yields (and, in fact, is equivalent to)
\cite{M.S.Brodskii,I.C.Gohberg and M.G.Krein,N.K.Nikolskii}  the
following description of cyclic vectors of $J^\alpha$
\begin{equation}
f \text{   is a cyclic vector for   }  J^\alpha\Leftrightarrow
      \int_0^\varepsilon |f(x)|^p\,dx>0\ \ \ \
      \text{ for all }\ \varepsilon >0.
      \label{equ5.1}
\end{equation}
This condition is called the $\varepsilon$ - condition.

Description \eqref{1point1} of $\Hyplat   J^\alpha$ is closely
connected with the description of the commutant $\{J^\alpha\}'$. The
commutant $\{J\}'$ of the operator $J$  defined on $L_2[0,1]$
 as well as the (weakly closed) algebra $\Alg J$ generated by $J$
and $\mathbb I$  were originally  described by D. Sarason \cite{D.
Sarason1} (see also a simple proof in \cite{J.A.Erdos}). Another,
description of $\Alg J$ for $J$ acting in $L_p[0,1]$ has also been
obtained in \cite{M.M.Malamud 1,preprint MMM}. Namely, it
was shown in \cite{M.M.Malamud 1,preprint MMM} that if $J$
is defined on $L_p[0,1]$ ($1< p<\infty$), then
$\{J^\alpha\}^{\prime}=\Alg J^\alpha$ and
$K\in\{J^\alpha\}^{\prime}$ if and only if it is bounded
and admits a representation
    \begin{equation}
             (Kf)(x)=
             \frac{d}{dx}\int_0^xk(x-t)f(t)\,dt,\ \ \
             \ k\in L_{p'}[0,1],\label{convoper}
      \end{equation}
where ${p'}^{-1}+p^{-1}=1$. Using a criterion of boundedness of
$K$ defined  on $L_2[0,1]$  (see  \cite[Proposition 3.1']{preprint
MMM}) it can easily be shown that for $p=2$ description
\eqref{convoper} is equivalent to that obtained in \cite{D.
Sarason1}.

 Now, let $ A=J^\alpha\otimes B(=\bigoplus_{j=1}^n\lambda_jJ^\alpha)$ be
a tensor product of the operator $J^\alpha$ defined on $L_p[0,1]$
and the $n\times n$ nonsingular diagonal matrix
  $B=\diag (\lambda_1,\dots,\lambda_n )\in\mathbb C^{n\times n}$.
The investigation of such operators with $B= B^*$ was initiated by G.
Kalisch \cite{G.K.Kalisch}. He has extended the known  Livsic
theorem (see  \cite{M.S.Brodskii,I.C.Gohberg and
M.G.Krein})  to the case of (abstract) Volterra operators with
finite-dimensional real part and characterized those of them that
are unitarily equivalent to $A$ with $B= B^*$ and $\alpha=1$ (see
also \cite{M.S.Brodskii, I.C.Gohberg and M.G.Krein}).

Later on, sufficient conditions for a Volterra operator\\ $K: f\to
\int_0^xK(x,t)f(t)\,dt$ defined on  $L_p[0,1]\otimes \mathbb C^{n}$
to be similar to the
 operator $A$ have been indicated in \cite{M.M.Malamud 5}. So, $A$
 may be treated as a similarity model for a wide
 class of Volterra operators. This result has been applied in \cite{M.M.Malamud 5}
to the problem of unique recovery of a Dirac type system by its monodromy matrix (see also
references therein).

 Further, one of the authors  \cite{M.M.Malamud 1,M.M.Malamud 2}
described the lattices $\Lat  A$ and $\Hyplat   A$ and the set
$\Cyc  A$ of cyclic subspaces of the operator $ A=J^\alpha\otimes
B(=\bigoplus_{j=1}^n\lambda_jJ^\alpha)$ defined on $L_p[0,1]\otimes
\mathbb C^n,\ p\in (1,\infty )$. In particular, in
\cite{M.M.Malamud 1,M.M.Malamud 2} necessary and sufficient
conditions for a sequence $\{\lambda_i\}_{i=1}^n$ guaranteeing the
splitting of each of the  lattices $\Lat A$ and  $\Hyplat A$, as
well as of the commutant $\{A\}^{\prime}$ and double commutant
$\{A\}^{\prime\prime}$ of  $A$ were found. More precisely, it was
proved in \cite{M.M.Malamud 1,M.M.Malamud 2} that each of the
following relations
   \begin{align}
\Lat \bigoplus_{j=1}^n\lambda_jJ^\alpha&= \bigoplus_{j=1}^n\Lat
\lambda_jJ^\alpha,\label{equ1.3}\\
\Hyplat \bigoplus_{j=1}^n\lambda_jJ^\alpha&=
\bigoplus_{j=1}^n\Hyplat
\lambda_jJ^\alpha,\label{equ1.4}\\
\biggl\{\bigoplus_{j=1}^n\lambda_jJ^\alpha\biggr\}'=\bigoplus_{j=1}^n\{\lambda_jJ^\alpha\}'&=
\biggl\{\bigoplus_{j=1}^n\lambda_jJ^\alpha\biggr\}''=\bigoplus_{j=1}^n\{\lambda_jJ^\alpha\}''\label{equ1.6}
\end{align}
is equivalent to the condition
\begin{equation}
     \arg\lambda_i\ne \arg\lambda_j\pmod{2\pi}
     \ \qquad  1\leqslant i< j\leqslant n.
     \label{equ1.2}
\end{equation}

Some partial cases of the equivalence
\eqref{equ1.3}$\Leftrightarrow$\eqref{equ1.2} have been obtained
earlier in \cite{L.T.Hill, B.P. Osilenker and
             V.S.Shulman, B.P. Osilenker and V.S.Shulman2}
(see Remark \ref{OsilenkerShulman}).

It is easily seen that \eqref{equ1.6} is equivalent to the
following fact : for any  $\lambda\not\in (0,+\infty)$ an operator
equation
\begin{equation}
J^\alpha X=\lambda XJ^\alpha\label{1.8A}
     \end{equation}
has only zero bounded solution $X$. Moreover, in \cite{M.M.Malamud
1, M.M.Malamud 2} a description of all nonzero solutions $X$ of
\eqref{1.8A}  with $\lambda\in (0,+\infty)$ was obtained.
Recently, equation \eqref{1.8A}, and even more general ones with a
bounded $A$ in place of $J^\alpha$, has attracted attention of
several mathematicians (see, for instance, \cite{A.Biswas and
A.Lambert and S.Petrovic, A.Biswas and S.Petrovic, M.T.Karaev},
and \cite{Paul S. Bourdon and Joel H. Shapiro, John B. Conway and
Gabriel Prajitura, Shkarin2}).
 In particular, some results from
\cite{M.M.Malamud 1} on equation \eqref{1.8A} were rediscovered in
\cite{A.Biswas and A.Lambert and S.Petrovic} and \cite{M.T.Karaev}
(the case  $\alpha=1$)  and in
 \cite{A.Biswas and S.Petrovic} (the case  $\alpha\in \mathbb Z_+\setminus\{0\}$).
  These authors treat  any solution $X$ of $AX=\lambda XA$ as
an extended eigenvector of $A$ (see Remark \ref{remee} (2)).

Note also that if \eqref{equ1.2} is not fulfilled then $A$ is not
cyclic. The set $\Cyc A$ of cyclic subspaces of $A$ was described
in \cite{M.M.Malamud 1,M.M.Malamud 2} by using a notion of
$*$-determinant (see Definition \ref{Def5.2}). For example,
vectors $f_1:=(f_{11},f_{12})$, $f_2:=(f_{21},f_{22})$ generate a
cyclic subspace of the operator $A=J\oplus J$ defined on
$L_p[0,1]\oplus L_p[0,1]$ if and only if the function
$\sdet\left(\begin{matrix}f_{11}&f_{12}\\
f_{21}&f_{22}\end{matrix}\right):=f_{11}*f_{22}-f_{12}*f_{21}$
satisfies $\varepsilon$ - condition \eqref{equ5.1} (here $f*g$
stands for the convolution of functions  $f,\ g\in
L_1[0,1]\ :\ (f*g)(x):=\int_0^x f(x-t) g(t)\,dt$).

Passing to the case of the Sobolev space we should mention the
pioneering work of E. Tsekanovskii  \cite{E.R.Tsekanovskii}.  More
precisely, it is shown in \cite{E.R.Tsekanovskii} (see also
\cite{P.V.Ostapenko and V.G.Tarasov}) that the integration
operator $J_k\ :\ f(x)\rightarrow \int_0^x f(t)\,dt$ defined on
$W_p^k[0,1]$
  is unicellular too and $\Lat  J_k$ consists of continuous part $\Lat ^c J_k$
  and discrete part $\Lat ^dJ_k$, $\Lat  J_k=\Lat ^c J_k\cup \Lat ^d J_k$. Here
\begin{equation}\label{neweq9}
 \begin{split}
\Lat ^c J_k=&\bigl\{E_{a,0}^k:\ a\in (0,1]\bigr\}\cup E_{0,0},\quad\\
E_{a,0}^k:=\bigl\{f\in W_p^k[0,1]:\  &f(x)=0\ \text{ for } x
\in[0,a]\bigr\},\qquad E_{0,0}:= W_{p,0}^k[0,1],
\end{split}
\end{equation}
is a continuous chain and $\Lat ^d J_k=\{E_l^k\}_{l=0}^k$ with
$E_k^k:=W_p^k[0,1]$ and
   \begin{equation}
      E_l^k=\bigl\{f\in W_p^k[0,1] :\
      f(0)=\dots =f^{(k-l-1)}(0)=0\bigr\},\quad
       l\in\{1,\dots,k-1\},
      \label{equ1.7}
   \end{equation}
  is a discrete chain.
\noindent
 It is clear that, for $0\leqslant a_1\leqslant
a_2\leqslant 1$,
\begin{align*}
\{0\}&=E_{1,0}^k\subset E_{a_2,0}^k\subset E_{a_1,0}^k\subset
E_{0,0}^k\\
&=W_{p,0}^k[0,1]=E_0^k\subset E_1^k\subset\dots \subset
E_k^k=W_p^k[0,1].
\end{align*}
 In \cite{I.Yu.Domanov and M.M.Malamud} we investigated
 the spectral properties of the complex powers
$J_k^\alpha$ of the integration operator $J_k$ defined on Sobolev
space $W_p^k[0,1]$.
  Namely, in \cite{I.Yu.Domanov and M.M.Malamud} were described
  the lattices $\Lat  J_k^\alpha$ and $\Hyplat   J_k^\alpha$, the set of cyclic subspaces $\Cyc
  J_k^\alpha$, the  operator algebra $\Alg  J_k^\alpha$, the commutant $\{J_k^\alpha\}^{\prime}$
and the double commutant $\{J_k^\alpha\}^{\prime\prime}$. In
particular,
  it turns out that $\{J_k^\alpha\}'=\{J_k^\alpha\}''$ and $\{J_k^\alpha\}'$ and $\Alg J_k^\alpha$  can be  described as follows:
\begin{gather}
 R\in \{J_k^\alpha\}'\Leftrightarrow
(Rf)(x)=cf(x)+\int_0^xr(x-t)f(t)\,dt,\ \ \ r\in
  W_p^{k-1}[0,1],\label{commutantsobolev}\\
    \begin{split}
   R\in &\Alg J_k^\alpha\\
    \Leftrightarrow &\begin{cases}
    R\in \{J_k^\alpha\}',
   \ r^{(l)}(0)=0,\ \ l\ne m\alpha-1,
                \ m\leqslant [\frac{k-1}{\alpha}],& 1\leqslant \alpha\leqslant k-1,        \\
        R\in \{J_k^\alpha\}',\ r\in
  W_{p,0}^{k-1}[0,1],& 2\leqslant k\leqslant \alpha+\frac 1p.
  \end{cases}
  \end{split}
\label{algsobolev}
\end{gather}
  It was also shown in \cite{I.Yu.Domanov and M.M.Malamud} that the operator $J_k^\alpha$ is unicellular
  on $W_p^k[0,1]$ if and only if either $k=1$ or $\alpha=1$. Moreover, the unicellularity of $J_k^\alpha$
  is equivalent to the validity of the "Neumann-Sarason"
  identity  $\Alg  J_k^\alpha=\{J_k^\alpha\}^{\prime\prime}$.

In this paper  we extend the main results from \cite{I.Yu.Domanov
and M.M.Malamud} and \cite{M.M.Malamud 2} to the case of the
operator $A_k:=J_k^\alpha\otimes B$ defined on Sobolev space
$W_p^k[0,1]\otimes\mathbb C^n$ of vector-functions. Moreover, we
investigate the spectral properties of the operator
$A_k:=\bigoplus_{j=1}^{n}\lambda_jJ_{k_j}^\alpha$.

The paper is organized as follows.
 In Section
\ref{Preliminaries}, we  collect some auxiliary results about
invariant subspaces for $C_0$ contractions and accretive
operators. Here we also present  and  complete  some results from
\cite{M.M.Malamud 2} for the operator
$A=\bigoplus_{j=1}^n\lambda_jJ^\alpha$ defined on $\bigoplus_{j=1}^n
L_p[0,1]$.

In Section \ref{OperatorAk0}, it is shown that the  operator
$A=\bigoplus_{j=1}^n\lambda_jJ^\alpha$  defined on\\
 $\bigoplus_{j=1}^n L_p[0,1]$ and the operator
$A_{k,0}=\bigoplus_{j=1}^n\lambda_jJ_{k,0}^\alpha$ defined on
$\bigoplus_{j=1}^n W_{p,0}^k[0,1]$ are isometrically equivalent.
Hence all results on the operator $A$ presented in  Section
\ref{Preliminaries} are immediately extended to the case of
the operator $A_{k,0}$.

In Section \ref{OperatorAk}, we provide a spectral analysis of the
operator $A_k=\bigoplus_{j=1}^n\lambda_jJ_{k,0}^\alpha$ defined on
$\bigoplus_{j=1}^n W_p^k[0,1]$.
A descriptions  of the (weakly closed) algebra $\Alg  A_k$, commutant
$\{A_k\}^{\prime}$ and  double commutant $\{A_k\}^{\prime\prime}$
is presented in Subsection \ref{sec7}, Subsection \ref{sec6} and
Subsection \ref{sec777}, respectively.

In Subsection \ref{subsec4.2}, we obtain a description of the
lattice $\Lat A_k$ assuming that $A_k:=\bigoplus_{j=1}^n\lambda_j
J_{k_j}^\alpha$ satisfies condition \eqref{equ1.2}.
  This description is essentially based on a description of $\Lat
  T$ (Theorem \ref{theorem4.2})
  for  finite-dimensional operator $T$ in $\bigoplus_{j=1}^n\mathbb C^{k_j}$.
In Subsection \ref{hyperinvAk}, a description of the lattice
$\Hyplat A_k$ is contained.  We emphasize that $\Hyplat
A_{k,0}=\Hyplat ^c A_k$ and the "continuous part"
   of $\Hyplat   A_k$ does not depend on $\alpha$.

It turns out that
 under condition \eqref{equ1.2} $\Hyplat   A_k$ as well as the commutant
$\{A_k\}^{\prime}$ of the operator $A_k$ splits, that is, relations
\eqref{equ1.4}-\eqref{equ1.6} remain valid with $\Hyplat  A$ and
$\{A\}^{\prime}$ replaced by $\Hyplat A_k$ and $\{A_k\}^{\prime}$,
respectively.
 On the other hand, under condition \eqref{equ1.2} $\Lat  A_k$
does not split for $k\geqslant 1$ in contrast to \eqref{equ1.3}.

In this connection we recall (see \cite{J.B.Conway and B. P.Y.Wu})
that for  a direct sum $T_1\oplus T_2$ of two operators on a Banach
space the relations \eqref{equ1.4}-\eqref{equ1.6} are equivalent
to each other and both are implied by \eqref{equ1.3}.  Thus, the
operator $A_k$   presents a simple counterexample to the
validity of the implication
   \begin{equation*}
      \Hyplat  (T_1\oplus T_2)=\Hyplat  T_1\oplus \Hyplat   T_2\ \ \ \Longrightarrow
      \Lat (T_1\oplus T_2)=\Lat T_1\oplus \Lat  T_2 .
   \end{equation*}
Other counterexamples can be found in \cite{J.B.Conway and B.
P.Y.Wu}.

In Subsection \ref{sec5}, we compute the spectral multiplicity
and present a description of the cyclic subspaces $\Cyc A_{k}$ for
the operator $A_{k}$.

It should be emphasized that descriptions of the sets $\Cyc A_{k}$
and $\Cyc A_{k,0}$ essentially differ. Namely, the first
description does not depend on a choice of a sequence $\{\lambda_j\}_1^n$,
though the second one depends on $\{\arg\lambda_j\}_1^n$ and is
similar to that obtained in \cite{M.M.Malamud 2} for
$\bigoplus_{j=1}^n L_p[0,1]$.

A description of the  set of cyclic subspaces of the operator $
A=\bigoplus_{j=1}^m\lambda_j J_{k_j}^\alpha\oplus\bigoplus_{j=m+1}^n
\lambda_jJ_{k_j,0}^\alpha$ acting in the mixed space $
\bigoplus_{j=1}^m W_p^{k_j}[0,1]\oplus\bigoplus_{j=m+1}^n
W_{p,0}^{k_j}[0,1]$  is presented too.

Main results of the paper have been announced (without proofs) in
 \cite{I.Yu.Domanov and M.M.Malamud 2}.

\subsection
{
 Notations and agreements
}
\begin{enumerate}
\item  $X,X_1,X_2$ stand for Banach spaces;
\item $[X_1,X_2]$ is the space of
    bounded linear operators from $X_1$ to $X_2$; $[X]:=[X,X]$;
\item $\mathbb I$ and $\mathbb I_k$  denote the identity
operators on $X$ and on $\mathbb C^k$, respectively;
    $\mathbb O:=0\cdot\mathbb I$,
    $\mathbb O_k:=0\cdot\mathbb I_k$;
\item  $J(0;k)$ denotes the Jordan nilpotent cell of order $k$;
\item  $\ker   T=\{x\in X\ :\ Tx=0\}$  is the kernel of $T\in [X]$;
\item $\ran T=\{Tx\ :\ x\in X\}$ is the range of $T\in[X]$;
\item $\Cyc T$ denotes the set of cyclic subspaces of an operator
$T\in[X]$ (see Definition \ref{definitioncyclic});
\item $\{T\}^{\prime}$ and $\{T\}^{\prime\prime}$ denote the
commutant and the double commutant ( or bicommutant) of an
operator $T\in[X]$, respectively;
\item $\Alg \{T_1,\dots, T_n\}$ stands for a weakly closed subalgebra of $[X]$
generated by $T_1,\dots,T_n\in [X]$  and the identity $\mathbb I$;
\item $\Lat \mathcal{A}$ denotes the lattice of invariant subspaces
of the algebra $\mathcal{A}$;
\item  $\Lat T$ ($:=\Lat(\Alg T)$) and $\Hyplat T$
($:=\Lat(\{T\}')$) denote the lattices of invariant
and hyperinvariant subspaces of $T\in[X]$, respectively;
\item  $\myspan   E$ is the closed linear span of the set $E\subset
X$;
\item $r*f$ stands for the convolution of functions
    $r,\ f\in L_1[0,1]\ :\ (r*f)(x):=\int_0^x r(x-t) f(t)\,dt$;
\item $ \mathbb Z_+ := \{n\in \mathbb Z:\ n\geqslant 0 \}$; $ \mathbb R_+ := \{ x\in \mathbb R:\ x\geqslant 0 \}$.
\end{enumerate}
As usual, $W_p^k[0,1]$ $(p\in (1,\infty),\ k\in \mathbb
Z_+\setminus\{0\})$ stands for the Sobolev space consisting of
functions $f$ having $k-1$ absolutely continuous derivatives and
$f^{(k)}\in L_p[0,1]$.
  $W_p^k[0,1]$ is a Banach space equipped with the norm
  \begin{equation*}
      \|f\|_{W_p^k[0,1]}=
      \left[
      \sum_{j=0}^{k-1} |f^{ (j)} (0) |^p
      +\int_0^1 |f^{ (k)} (t) |^p\, dt
      \right] ^{1/p}.
  \end{equation*}
  $
  W_{p,0}^k[0,1]:=\{f\in W_p^k[0,1] :\ f(0)=\dots =f^{(k-1)}(0)=0\}
  $.

We set $W_p^0[0,1]:=L_p[0,1]$ and $W_{p,0}^0[0,1]=L_p[0,1]$.

Let $J_{k,0}^\alpha$ and $J_k^\alpha:= J_{k,k}^\alpha$ stand for  the
operator $J^\alpha$ defined on  $W_{p,0}^k[0,1]$ and $W_p^k[0,1]$,
respectively. The operator $J_{k,0}^\alpha$ is well defined on
$W_{p,0}^k[0,1]$ for any $\Re \alpha >0$. The operator
$J_k^\alpha$ is well defined on $W_p^k[0,1]$ if either $
\alpha\in \mathbb Z_+\setminus\{0\}$ or $\alpha
>k-\frac{1}{p}$. Therefore throughout the paper we assume that
\begin{enumerate}

\item the operator $A:=\bigoplus_{j=1}^n\lambda_j J^\alpha$ is
defined on $\bigoplus_{j=1}^n L_p[0,1]$ for $\Re \alpha >0$;
\item the operator  $A_{k,0}:=\bigoplus_{j=1}^n\lambda_j
J_{k_j,0}^\alpha$ is defined on $\bigoplus_{j=1}^n
W_{p,0}^{k_j}[0,1]$ with $k_j\geqslant 0$ and $\Re \alpha
>0$;
\item the operator  $A_k:=\bigoplus_{j=1}^n\lambda_j J_{k_j}^\alpha$
is defined on $\bigoplus_{j=1}^n W_{p}^{k_j}[0,1]$ with $k_j\geqslant
1$ and for $\alpha\in \mathbb Z_+\setminus\{0\}$ or $\alpha
>\max\limits_{1\leqslant j \leqslant n} k_j-\frac{1}{p}$.
\end{enumerate}
We  will also assume that $\lambda_j\ne 0$ for $j\in\{1,\dots,n\}$.
\section
{Preliminaries}
 \label{Preliminaries}
\subsection{Invariant
 subspaces  of some operators}
Here we present some known  results on invariant subspaces of  finite-dimensional
 nilpotent operators and $C_0$ contractions.We also
recall a condition about splitting of $\Alg
(A\oplus B)$, where $A,B\in [X]$.
\begin{theorem}\label{theorem4.2}
\cite{L.Brickman and P.A.Fillmore,I.Gohberg and P.Lancaster and
L.Rodman}
      If $Q$ is nilpotent on a finite-dimensional vector space V, then
      \begin{equation}
             \Lat (Q)
             =\bigcup_M\left\{[M,Q^{-1}M]:\
             M\in \Lat (Q\upharpoonright QV)\right\},
             \label{eqno(2.3)}
      \end{equation}
      where $[M,Q^{-1}M]$ is an interval in the lattice of all
      subspaces of $V$.
      Each interval satisfies the equation
      \begin{equation}
             \dim Q^{-1}M-\dim M=\dim\ker   Q.
      \label{eqno(2.4)}
      \end{equation}
\end{theorem}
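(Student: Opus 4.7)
The plan is to establish the description of $\Lat(Q)$ by exhibiting a natural map from invariant subspaces to pairs consisting of an invariant subspace of the compression $Q\upharpoonright QV$ together with an intermediate choice, and showing that this map is surjective. Specifically, to any $N\in\Lat(Q)$ I associate the subspace $M:=N\cap QV$. The main observation is that $M$ is automatically $Q$-invariant inside $QV$: because $N$ is $Q$-invariant we have $QN\subseteq N$, and because $Q(QV)\subseteq QV$ we also have $QM\subseteq QV$, so $QM\subseteq N\cap QV=M$. Moreover, the same computation $QN\subseteq N\cap QV=M$ rewrites as $N\subseteq Q^{-1}M$, while trivially $M\subseteq N$. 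Thus $N$ lies in the interval $[M,Q^{-1}M]$ with $M\in\Lat(Q\upharpoonright QV)$, which gives the inclusion $\Lat(Q)\subseteq\bigcup_M[M,Q^{-1}M]$.

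For the reverse inclusion I would fix $M\in\Lat(Q\upharpoonright QV)$ and let $N$ be any subspace with $M\subseteq N\subseteq Q^{-1}M$. The key identity is $Q(Q^{-1}M)=M$: the inclusion $Q(Q^{-1}M)\subseteq M\cap QV$ is immediate, and since $M\subseteq QV$ one actually has $M\cap QV=M$, so equality holds. Consequently $QN\subseteq Q(Q^{-1}M)=M\subseteq N$, which proves $N\in\Lat(Q)$. Combining the two inclusions yields \eqref{eqno(2.3)}.

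For the dimension formula I would apply the first isomorphism theorem to the restricted map $Q\colon Q^{-1}M\to M$. This map is surjective by the identity $Q(Q^{-1}M)=M$ just established, and its kernel is $\ker Q\cap Q^{-1}M=\ker Q$, since every vector killed by $Q$ certainly lies in $Q^{-1}M$. Therefore $\dim Q^{-1}M=\dim M+\dim\ker Q$, which is \eqref{eqno(2.4)}.

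No step in this argument is genuinely difficult; the only point that deserves attention, and which is the technical heart of the converse, is the identity $Q(Q^{-1}M)=M$, whose non-trivial half rests on the hypothesis $M\subseteq QV$ built into the definition of $\Lat(Q\upharpoonright QV)$. I would also remark that nilpotency of $Q$ is not used in the proof of the statement itself; its role is to make the description useful in practice, since iterating the recipe on the smaller space $QV$ eventually terminates, yielding an inductive parametrisation of $\Lat(Q)$.
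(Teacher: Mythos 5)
Your argument is correct. Note that the paper itself gives no proof of this statement: it is quoted as a known result from the cited references (Brickman--Fillmore, Gohberg--Lancaster--Rodman), and your proof is essentially the standard one from that literature, based on the assignment $N\mapsto M:=N\cap QV$ for one inclusion and on the identity $Q(Q^{-1}M)=M$ (valid precisely because $M\subseteq QV$) for the other inclusion and for the dimension count $\dim Q^{-1}M-\dim M=\dim\ker Q$. The only point worth tightening is the sentence establishing $Q(Q^{-1}M)=M$: as written you only justify the inclusion $Q(Q^{-1}M)\subseteq M$, and the reverse inclusion should be stated explicitly (every $m\in M\subseteq QV$ is of the form $Qv$ with $v\in Q^{-1}M$); your closing remark that nilpotency is not needed for the set equality itself, only for the usefulness of the recursion on $QV$, is also accurate.
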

The following result was first discovered by P. Halmos \cite{P.R.Halmos} for operators defined on  finite-dimensional spaces.
The generalization to $C_0$ contractions on Hilbert spaces
belongs to H. Bercovici \cite[Proposition 5.33]{Bercovici}, \cite[Corollary 2.11]{
Bercovici2} and P. Wu \cite[Theorem 1.2]{Wu}, and \cite[Theorem 5]{Wu2})(see also references therein).
     \begin{theorem}\label{Halmos}
Let $T$ be a $C_0$-contraction defined on  a separable Hilbert
space. Then every invariant subspace of $T$ is the closure of the
range and the kernel of some bounded linear transformation that
commutes with $T$,  that is,
          \begin{equation*}
                \Lat T=
                \bigl\{\ker   C : \  C\in\{T\}^{\prime}\bigr\}=
                \bigl\{\overline{\ran C}:\ C\in \{T\}^{\prime}\bigr\}.
          \end{equation*}
\end{theorem}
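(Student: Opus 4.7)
The plan is to prove the two equalities
\[
\Lat T = \bigl\{\ker C : C \in \{T\}'\bigr\} = \bigl\{\overline{\ran C} : C \in \{T\}'\bigr\}.
\]
The inclusions $\supseteq$ are immediate: if $C \in \{T\}'$ and $Cf = 0$, then $CTf = TCf = 0$, so $\ker C$ is $T$-invariant; similarly $T(\ran C) = C(\ran T) \subseteq \ran C$, which gives $\overline{\ran C} \in \Lat T$. The work therefore lies in the two inclusions $\subseteq$.

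For these, I would use the Sz.-Nagy--Foias functional model. By Bercovici's classification, every $C_0$ contraction on a separable Hilbert space is quasi-similar to its Jordan model
\[
\widetilde{T} = \bigoplus_{\alpha} S(\theta_\alpha),
\]
with $\theta_{\alpha+1}$ dividing $\theta_\alpha$ and each Jordan block $S(\theta) = P_{H(\theta)} M_z\big|_{H(\theta)}$ acting on $H(\theta) := H^2 \ominus \theta H^2$. For a single block, Beurling's theorem identifies $\Lat S(\theta)$ with the subspaces $u H(\theta/u)$ indexed by the inner divisors $u$ of $\theta$. A direct computation shows that the compressed Toeplitz operator $T_{\theta/u} := P_{H(\theta)} M_{\theta/u}\big|_{H(\theta)}$ lies in $\{S(\theta)\}'$ and satisfies $\ker T_{\theta/u} = u H(\theta/u)$, while $T_u := P_{H(\theta)} M_u\big|_{H(\theta)}$ also commutes with $S(\theta)$ and satisfies $\overline{\ran T_u} = u H(\theta/u)$. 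This establishes both representations blockwise in the scalar case.

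Next, I would extend the construction to the full Jordan model $\widetilde{T}$. Invariant subspaces of $\widetilde{T}$ need not split along the direct-sum decomposition, so one must assemble more elaborate commutant elements by combining the blockwise operators above with intertwiners between distinct $S(\theta_\alpha)$ (which exist precisely because of the divisibility chain $\theta_{\alpha+1}\mid\theta_\alpha$). The description of $\Lat \widetilde{T}$ in terms of Jordan-type data, together with the known structure of $\{\widetilde T\}'$, then supplies exactly what is needed to realize each $\widetilde{M} \in \Lat \widetilde{T}$ as $\ker \widetilde{C}_1$ and as $\overline{\ran \widetilde{C}_2}$ for suitable $\widetilde{C}_1, \widetilde{C}_2 \in \{\widetilde{T}\}'$.

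Finally, I would transfer back via the quasi-affinities $X : T \to \widetilde{T}$ and $Y : \widetilde{T} \to T$ provided by the Jordan model: these induce a bijection $\Lat T \leftrightarrow \Lat \widetilde{T}$ via $M \mapsto \overline{XM}$ and $\widetilde{M} \mapsto \overline{Y\widetilde{M}}$, and convert every $\widetilde{C} \in \{\widetilde{T}\}'$ into $C := Y \widetilde{C} X \in \{T\}'$. The main obstacle is the middle step: verifying on the Jordan model that every invariant subspace really is the kernel (and a closure of range) of a commutant element, and then checking in the transfer that kernels and closed ranges survive the composition with $X$ and $Y$ accurately enough that no invariant subspace of $T$ is missed. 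This is the delicate point where the quasi-similarity machinery of Bercovici--Wu is essential.
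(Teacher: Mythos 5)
First, a point of comparison: the paper does not prove Theorem \ref{Halmos} at all. It is stated as a known result, attributed to Halmos in the finite-dimensional case and to Bercovici and Wu for $C_0$ contractions, with precise references given in place of a proof. So there is no in-paper argument for your sketch to match; the only fair comparison is with the proofs in the cited sources, which do indeed live in the Sz.-Nagy--Foias functional model and the theory of Jordan models, so your choice of machinery is the right one. Your easy inclusions ($\ker C$ and $\overline{\ran C}$ are invariant for $C\in\{T\}'$) and your single-block computation (that $(\theta/u)(S(\theta))$ has kernel $uH(\theta/u)$ and that $u(S(\theta))$ has closed range $uH(\theta/u)$) are correct.

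The attempt nevertheless has genuine gaps at exactly the two places where the theorem is hard, and you partly acknowledge this yourself. (a) The transfer step is not sound as stated: quasi-affinities $X,Y$ do not in general induce a bijection $\Lat T\leftrightarrow\Lat\widetilde T$ via $M\mapsto\overline{XM}$, and establishing a usable correspondence for $C_0$ operators is essentially the content of the Bercovici--Wu theory you are trying to invoke. Worse, even where such a correspondence holds, kernels do not transport the way ranges do: for $C=Y\widetilde CX$ one gets $\overline{\ran C}=\overline{Y(\overline{\ran\widetilde C})}$ because $X$ has dense range, which is serviceable for the range half of the statement, but $\ker C=X^{-1}(\ker\widetilde C)$, which contains, and need not equal, the invariant subspace $M$ you started from. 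So the kernel half cannot be obtained by conjugating commutant elements of the model; the arguments of Bercovici and Wu for that half work directly with $T$ itself (via the commutant lifting theorem and operators such as $m''(T)$, where $m''$ is the minimal function of the compression of $T$ to $M^{\perp}$, which automatically maps the whole space into $M$), not by transport from the Jordan model. (b) The multi-block case --- invariant subspaces of $\bigoplus_{\alpha}S(\theta_\alpha)$ that do not split along the direct sum --- is only gestured at; saying that the known structure of $\{\widetilde T\}'$ ``supplies exactly what is needed'' is an appeal to the conclusion rather than a proof of it. As it stands, your argument reduces the theorem to statements equivalent in difficulty to the theorem itself; since the paper treats the result as a quoted black box, the efficient repair is to do the same and cite Bercovici and Wu rather than to reprove them.
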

\begin{definition}(see \cite{B. Sz.-Nagy and C. Foias 1},\cite{N.K.Nikolskii})
Let $A$ and $B$ be  bounded operators defined on a Banach space $X_1$ and $X_2$ respectively.
$A$ is said to be quasisimilar to $B$ if there
exist deformations $K:\ X_1\rightarrow X_2$ and $L:\
X_2\rightarrow X_1$ (i.e. $\overline{\ran K}=X_2$, $\ker
K=\{0\}$, $\overline{\ran L}=X_1$, $\ker L=\{0\}$) such that
$AL=LB$ and $KA=BK$.
\end{definition}
\begin{remark}\label{remarkHalmos}
\begin{itemize}
\item[(i)]  Standard manipulations with Cayley transform implies
that Theorem \ref{Halmos} holds also for quasinilpotent accretive operators with
finite-dimensional real part.

\item[(ii)]  Let operator $A$ be  defined on a Banach space. Let also
$A$ be quasisimilar to a $C_0$ contraction $T$. Then, obviously
the statement of Theorem \ref{Halmos} is true for $A$, that is,
$\Lat A=\bigl\{\ker C : \ C\in\{A\}^{\prime}\bigr\}=
                \bigl\{\overline{\ran C}:\ C\in \{A\}^{\prime}\bigr\}$.
\end{itemize}
\end{remark}
Let $X$ be a Banach space and let $n$ be a positive integer. Then $X^{(n)}$ denotes the direct sum of $n$
copies of $X$. If $A$ is an operator on $X$, then $A^{(n)}$ denotes the direct sum of $n$ copies of $A$ (regarded as an operator on $X^{(n)}$).

The following theorem is
implicitly contained in  \cite{D. Sarason2} (see also \cite[Theorem 7.1,
Theorem 7.2]{H.Radjavi and P.Rosenthal} )
\begin{theorem}\label{corRadjRos}
Let $T_1,\dots, T_r\in [X]$ and
\begin{equation*}
\Lat( T_1^{(n)}\oplus\dots \oplus T_r^{(n)})=\Lat
T_1^{(n)}\oplus\dots \oplus\Lat T_r^{(n)},\qquad  n=1,2,\dots
\end{equation*}
Then $\Alg (T_1\oplus\dots \oplus T_r)=\Alg  T_1\oplus\dots
\oplus\Alg T_r$.
 \end{theorem}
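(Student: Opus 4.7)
The plan is to combine the splitting hypothesis with the standard amplification criterion for a single-operator algebra: for $T\in[X]$, an operator $S\in[X]$ lies in $\Alg T$ if and only if for every $n\geqslant 1$ the amplification $S^{(n)}$ leaves every subspace in $\Lat T^{(n)}$ invariant. This follows from the equality of the WOT- and SOT-closures of convex sets in $[X]$, together with the observation that the cyclic $T^{(n)}$-invariant subspace generated by $(x_1,\dots,x_n)$ equals $\overline{\myspan}\{(p(T)x_1,\dots,p(T)x_n):p\text{ a polynomial}\}$.

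Set $T:=T_1\oplus\dots\oplus T_r$, acting on $X^{(r)}$. The inclusion $\Alg T_1\oplus\dots\oplus\Alg T_r\subseteq\Alg T$ is immediate, so fix $S\in\Alg T$. Writing $S$ as a WOT-limit of polynomials $p_\alpha(T)$, one has $S^{(n)}=\text{w-lim}\,p_\alpha(T^{(n)})\in\Alg T^{(n)}$ for every $n$. Applying the hypothesis at $n=1$, each coordinate block $0\oplus\dots\oplus X\oplus\dots\oplus 0$ belongs to $\Lat T_1\oplus\dots\oplus\Lat T_r=\Lat T$, whence $S$ preserves all of them and $S=S_1\oplus\dots\oplus S_r$ for some $S_j\in[X]$.

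To show $S_j\in\Alg T_j$ for each $j$, fix $n\geqslant 1$ and $M\in\Lat T_j^{(n)}$. After the rearrangement identifying $T^{(n)}$ with $T_1^{(n)}\oplus\dots\oplus T_r^{(n)}$, the subspace $0\oplus\dots\oplus M\oplus\dots\oplus 0$ lies in $\Lat T^{(n)}$ by hypothesis, and is therefore invariant under $S^{(n)}=S_1^{(n)}\oplus\dots\oplus S_r^{(n)}$; hence $S_j^{(n)}M\subseteq M$. Thus $S_j^{(n)}$ preserves every element of $\Lat T_j^{(n)}$ for every $n$, and the amplification criterion yields $S_j\in\Alg T_j$. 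The main obstacle is the amplification criterion itself, which is precisely the Sarason/Radjavi--Rosenthal machinery cited in the statement; granted that criterion, the remainder of the argument is coordinate-wise bookkeeping on $\Lat T^{(n)}$.
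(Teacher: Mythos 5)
Your overall strategy---the amplification criterion for $\Alg T$ combined with the splitting of $\Lat T^{(n)}$---is exactly the Sarason/Radjavi--Rosenthal route that the paper cites, but you have applied it to the wrong inclusion. The containment $\Alg T\subseteq \Alg T_1\oplus\dots\oplus\Alg T_r$ is the one that is immediate: a WOT-limit of the block-diagonal operators $p_\alpha(T)=\bigoplus_j p_\alpha(T_j)$ is block-diagonal, with $j$-th block a WOT-limit of $p_\alpha(T_j)$ and hence in $\Alg T_j$. What you dismiss as immediate, namely $\Alg T_1\oplus\dots\oplus\Alg T_r\subseteq\Alg T$, is false in general and is the entire content of the theorem: for $T_1=T_2=J$ on $L_2[0,1]$ one has $\Alg(J\oplus J)=\{R\oplus R:\ R\in\Alg J\}$ (Lemma \ref{lem2.6} with $M_i=N_i=\mathbb I$), so $\mathbb I\oplus\mathbb O$ belongs to $\Alg J\oplus\Alg J$ but not to $\Alg(J\oplus J)$. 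Consistently with this, the detailed part of your argument never genuinely uses the hypothesis: a subspace $0\oplus\dots\oplus M\oplus\dots\oplus 0$ with $M\in\Lat T_j^{(n)}$ is invariant for $T^{(n)}$ whether or not $\Lat T^{(n)}$ splits, so your verification that $S_j^{(n)}M\subseteq M$ only re-proves the trivial inclusion.

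The machinery you set up does prove the correct direction once the roles are reversed. Given $S=S_1\oplus\dots\oplus S_r$ with $S_j\in\Alg T_j$, apply the amplification criterion to $T$ itself: for each $n$ the hypothesis says that every $M\in\Lat T^{(n)}$ has the form $M_1\oplus\dots\oplus M_r$ with $M_j\in\Lat T_j^{(n)}$ (this is the one place the splitting assumption is indispensable); since $S_j\in\Alg T_j$ implies $S_j^{(n)}\in\Alg T_j^{(n)}$, each $S_j^{(n)}$ preserves $M_j$, hence $S^{(n)}$ preserves $M$; as this holds for every $n$, the criterion yields $S\in\Alg T$. With that reversal your proof is complete and coincides with the intended Radjavi--Rosenthal argument.
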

\subsection{Spectral analysis of the
 operator $A=\bigoplus_{i=1}^n\lambda_iJ^\alpha$ defined on
$\bigoplus_{i=1}^n L_p[0,1]$}

 Throughout this subsection $X$ stands for $L_p[0,1]$, with $p\in
(1,\infty)$. Here we present some results from \cite{M.M.Malamud
2} on spectral analysis of the  operator
$A=\bigoplus_{i=1}^n\lambda_iJ^\alpha$ defined on $\bigoplus_{1}^n X$.
Moreover, we obtain a description of $\Alg A$ and investigate its
properties.

We begin with the following simple statement.
\begin{lemma}\label{lem2.6}
Let $A_i,M_i,N_i\in [X]$ for $i\in\{1,\dots,n\}$ and
$A=\bigoplus_{i=1}^n A_i$. Assume also that the following identities
are satisfied
\begin{equation}\label{ver35} A_i^m=M_iA_1^mN_i,\qquad
m\in \mathbb Z_+,  \qquad i\in\{1,\dots,n\}.
\end{equation}
Then
\begin{equation}\label{alglemma}
\Alg A= \biggl\{\bigoplus_{i=1}^n R_i :\ \ R_1\in \Alg A_1,\ \
R_i=M_iR_1N_i,\ \ i\in\{2,\dots,n\} \biggr\}.
\end{equation}
\end{lemma}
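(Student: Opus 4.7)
The plan is to establish the two inclusions defining $\Alg A$ by working at the level of polynomials and then passing to weak operator limits. The hypothesis \eqref{ver35} with $m=0$ immediately yields $M_iN_i=\mathbb I$, and in particular (taking $i=1$) a "consistency" identity $p(A_1)=M_1p(A_1)N_1$ for every polynomial $p$, which will automatically propagate to $R_1=M_1R_1N_1$ for every $R_1\in\Alg A_1$ — this is what makes it legitimate to include the index $i=1$ in the formula $R_i=M_iR_1N_i$.

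For the inclusion $\supseteq$, I would start from a polynomial $p(z)=\sum_{m=0}^d c_mz^m$ and note that \eqref{ver35} gives, termwise,
\begin{equation*}
  p(A_i)=\sum_{m=0}^d c_m A_i^m=\sum_{m=0}^d c_m M_iA_1^mN_i=M_ip(A_1)N_i,
\end{equation*}
so that $p(A)=\bigoplus_{i=1}^n M_ip(A_1)N_i$. Thus the right-hand side of \eqref{alglemma} contains every polynomial in $A$. Now, for any $R_1\in\Alg A_1$, there is a net $(p_\gamma)$ of polynomials with $p_\gamma(A_1)\to R_1$ in WOT. Since the map $T\mapsto M_iTN_i$ on $[X]$ is WOT-continuous (a one-line duality check: $\langle M_iTN_if,g\rangle=\langle TN_if,M_i^*g\rangle$), we obtain $M_ip_\gamma(A_1)N_i\to M_iR_1N_i$ in WOT for each $i$, and therefore $p_\gamma(A)=\bigoplus_i M_ip_\gamma(A_1)N_i$ converges in WOT to $\bigoplus_i M_iR_1N_i$ (WOT convergence on the finite direct sum $\bigoplus_i X$ is just coordinatewise WOT convergence on block-diagonal operators). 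Hence $\bigoplus_i M_iR_1N_i\in\Alg A$.

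For the converse inclusion $\subseteq$, take $R\in\Alg A$ as a WOT limit of polynomials $p_\gamma(A)$. Each $p_\gamma(A)$ is block-diagonal, and block-diagonality is preserved under WOT limits, so $R=\bigoplus_{i=1}^n R_i$. Restricting the WOT convergence to the $i$-th block, $R_i$ is the WOT limit of $p_\gamma(A_i)=M_ip_\gamma(A_1)N_i$; for $i=1$ this shows $R_1\in\Alg A_1$, and for general $i$ the WOT-continuity of $T\mapsto M_iTN_i$ applied to $p_\gamma(A_1)\to R_1$ forces $R_i=M_iR_1N_i$.

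There is no real obstacle here beyond bookkeeping: the entire content is (i) the observation $M_iN_i=\mathbb I$ that lets the identity enter the algebra cleanly, and (ii) the WOT-continuity of the two-sided multiplication map $T\mapsto M_iTN_i$. The only point one must not skip is verifying that $R_1=M_1R_1N_1$ automatically holds for elements of $\Alg A_1$, so that the description \eqref{alglemma} is internally consistent when the index $i=1$ is read on the same footing as $i\geqslant 2$.
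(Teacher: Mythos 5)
Your proof is correct and follows essentially the same route as the paper's: both rest on the termwise identity $p(A_i)=M_ip(A_1)N_i$ for polynomials and then pass to weak-operator limits, the paper compressing the limit step into the identity $\Alg A=M\Alg\bigl(\bigoplus_{i=1}^nA_1\bigr)N$ while you spell out the WOT-continuity of $T\mapsto M_iTN_i$ and the preservation of block-diagonality. Your extra observation that $M_iN_i=\mathbb I$ and that $R_1=M_1R_1N_1$ holds automatically is a harmless (and accurate) consistency check that the paper leaves implicit.
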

\begin{proof}
Let  $M:= \bigoplus_{i=1}^n M_i$ and  $N:=\bigoplus_{i=1}^n N_i$.
 Then  for any polynomial
$p(\cdot)$ identities \eqref{ver35}  yield  $p(A_i)=
M_ip(A_1)N_i.$  Hence,
\begin{equation*}
p(A)=\bigoplus_{i=1}^np(A_i)=\bigoplus_{i=1}^nM_ip(A_1)N_i=
M\biggl(\bigoplus_{i=1}^np(A_1)\biggr)N.
\end{equation*}
On the other hand, by definition of $\Alg A$ polynomials $p(A)$
are dense in $\Alg A$ in weak operator topology. Hence the last
identities imply $\Alg A=M\Alg(\bigoplus_{i=1}^nA_1)N$.  To complete
the proof it remains to note that $\Alg(\bigoplus_{i=1}^nA_1) =
\bigoplus_{i=1}^n\Alg(A_1).$
\end{proof}
Next  we apply Lemma  \ref{lem2.6} to describe $\Alg A$ for the
operator  $A=\bigoplus_{i=1}^n\lambda_iJ^\alpha$ with factors
$\lambda_i$ having equal arguments,
      \begin{equation}
      \lambda_i=\lambda_1/s_i^\alpha\
      ,\ \
      1=s_1\leqslant s_2\leqslant \ldots \leqslant s_n,\ \ \ \ \ \ \ \ i\in\{1,\dots,n\}.\label{arg=arg}
      \end{equation}
\begin{theorem}\label{th5.3alg}
      Let the operator
      $
      A=\bigoplus_{i=1}^n \lambda_iJ^\alpha
      $
      be defined on
      $
      \bigoplus_{i=1}^{n}X
      $
      with $\lambda_i$ satisfying  condition \eqref{arg=arg}.
      Then  $\Alg A$ is
      \begin{equation}
      \begin{split}
      \Alg A=\Bigl\{R=\diag(R_1,\dots,R_n) :
      (R_if)(x)=\frac{d}{dx}\int_0^xr_i(x-t)f(t)\,dt,\\
      r_1\in L_{p'}[0,1],\quad
 r_i(x)=r_1(s_i^{-1}x),\quad  R_1\in
      [L_p[0,1]]\Bigr\}.
\end{split}\label{Ri}
      \end{equation}
\end{theorem}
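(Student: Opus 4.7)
The plan is to apply Lemma \ref{lem2.6} to the direct summands $A_i := \lambda_i J^\alpha$. By hypothesis \eqref{arg=arg} we have $\lambda_i^m = \lambda_1^m s_i^{-m\alpha}$ and $A_i^m = \lambda_i^m J^{m\alpha}$, so the hypothesis $A_i^m = M_i A_1^m N_i$ of the lemma reduces to producing bounded operators $M_i, N_i \in [L_p[0,1]]$ satisfying
\begin{equation*}
s_i^{-m\alpha}\, J^{m\alpha} = M_i\, J^{m\alpha}\, N_i, \qquad m \in \mathbb{Z}_+.
\end{equation*}
Since the Riemann--Liouville kernel $x^{m\alpha-1}/\Gamma(m\alpha)$ scales homogeneously under dilation, the natural candidates are the ``stretch-and-cut'' and ``compression'' operators
\begin{equation*}
(N_i f)(t) := f(s_i t)\, \chi_{[0,1/s_i]}(t), \qquad (M_i g)(x) := g(x/s_i), \qquad x, t \in [0,1],
\end{equation*}
which are bounded on $L_p[0,1]$ because $s_i \geq 1$, and which satisfy $M_i N_i = \mathbb{I}$ (this handles the case $m=0$).

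The main, and essentially only, computational step is the verification of the scaling identity for $m \geq 1$. For $x \in [0,1]$ one has $x/s_i \leq 1/s_i$, so no truncation loss occurs in $(J^{m\alpha} N_i f)(x/s_i)$; substituting $u = s_i t$ in
\begin{equation*}
(J^{m\alpha} N_i f)(x/s_i) = \int_0^{x/s_i} \frac{(x/s_i - t)^{m\alpha-1}}{\Gamma(m\alpha)}\, f(s_i t)\, dt
\end{equation*}
and collecting powers of $s_i$ produces exactly $s_i^{-m\alpha}(J^{m\alpha} f)(x) = (M_i J^{m\alpha} N_i f)(x)$. The potential obstacle here is purely bookkeeping: one must check that the Jacobian $dt = s_i^{-1}\,du$ combines with the factor $s_i^{-(m\alpha-1)}$ coming from $(x/s_i - u/s_i)^{m\alpha-1}$ to yield the correct overall power $s_i^{-m\alpha}$.

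Once \eqref{ver35} is verified, Lemma \ref{lem2.6} gives $\Alg A = \bigl\{\bigoplus_{i=1}^n R_i : R_1 \in \Alg A_1,\ R_i = M_i R_1 N_i\bigr\}$. Since $\Alg A_1 = \Alg J^\alpha$, the description \eqref{convoper} (applied with $\lambda_1 J^\alpha$ in place of $J^\alpha$) forces $R_1$ to have the convolution--derivative form with density $r_1 \in L_{p'}[0,1]$. The final step is to evaluate $R_i f = M_i R_1 N_i f$ explicitly; repeating the change of variable $u = s_i t$ inside the convolution $\int_0^y r_1(y-t) f(s_i t)\, dt$ and then setting $y = x/s_i$ converts the density $r_1$ into $r_i(x) := r_1(x/s_i)$ and produces precisely the formula \eqref{Ri}. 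Boundedness of $R_i$ on $L_p[0,1]$ is inherited from that of $M_i$, $R_1$ and $N_i$.
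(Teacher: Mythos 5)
Your proposal is correct and follows essentially the same route as the paper: the same intertwining operators $M_i$, $N_i$ (the paper's \eqref{neweq4new}), the same reduction via Lemma \ref{lem2.6}, and the same final evaluation of $M_iR_1N_i$ using the description \eqref{convoper} of $\Alg J^\alpha$. The scaling computation you single out as the key step is exactly the identity the paper asserts can ``easily be checked,'' and your bookkeeping of the powers of $s_i$ is right.
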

\begin{proof}
To apply Lemma  \ref{lem2.6} we introduce the operators $M_i$ and
$N_i$ by setting
\begin{equation}\label{neweq4new}
(M_if)(x): = f(s_i^{-1}x),\quad (N_if)(x):=\begin{cases}
f(s_ix),& x\in [0,s_i^{-1}],\\
0,&x\in [s_i^{-1},1].
\end{cases}\\
\end{equation}
Clearly, $\ker N_i=\{0\}$, $\ran N_i =\chi_{[0,s_i^{-1}]}L_p[0,1]
$, $\ker M_i= \chi_{[s_i^{-1},1]}L_p[0,1]$ and\\ $\ran M_i =
L_p[0,1].$ It can  easily be checked that $M_iN_i = I_{L_p[0,1]}$
and, moreover,
\begin{equation*}
(\lambda_iJ^\alpha)^m=M_i(\lambda_1J^\alpha)^m N_i,\qquad m\in
\mathbb Z_+, \qquad  i\in\{1,\dots,n\}.
\end{equation*}
Setting $A_i:= \lambda_iJ^\alpha$ and applying Lemma \ref{lem2.6}
we obtain
\begin{equation}\label{2.8}
       \Alg A=\Bigl\{R=\bigoplus_{i=1}^n R_i: R_1\in \Alg
      (\lambda_1J^\alpha),\quad R_i = M_iR_1N_i,\quad
      i\in\{2,\dots,n\}\Bigr\}.
\end{equation}
On the other hand, according to  \eqref{convoper}, any (bounded)
$R_1\in \Alg(\lambda_1J^\alpha)$ admits a representation
\begin{equation}
R_1\ :\ f(x)\rightarrow\frac{d}{dx}\int_0^xr_1(x-t)f(t)\,dt,\qquad
r_1\in L_{p'}[0,1].
\end{equation}
Straightforward  calculations show that that
\begin{equation*}
(M_iR_1N_if)(x)=\frac{d}{dx}\int_0^xr_1(s_i^{-1}(x-t))f(t)\,dt,\qquad
i\in\{2,\dots,n\}.
\end{equation*}
Combining the last equality with \eqref{2.8} we complete the
proof.
       \end{proof}
To state the results on  $\{A\}'$ we need some additional
notations. For any $a\in \mathbb R_+\backslash\{0\}$ we define an
operator $ L_a:\ X\rightarrow X $ by
\begin{equation} \label{neweq2}
      L_a:\ f(x)\to
                 g(x)=
                       \begin{cases}
                           f(ax),       &  0<a\leqslant 1, \\
                             \begin{cases}
                                   0,         & x\in [0,1-a^{-1}], \\
                                   f(ax-a+1), & x\in [1-a^{-1},1],
                             \end{cases} & a>1.
                       \end{cases}
\end{equation}
We set also
$$
L_a\{J^\alpha\}^{\prime}:=\{L_aK:\
K\in\{J^\alpha\}^{\prime}\},\qquad
\{J^\alpha\}^{\prime}L_a:=\{KL_a:\ K\in\{J^\alpha\}^{\prime}\}.
$$
It is easily checked that
$L_a\{J^\alpha\}^{\prime}=\{J^\alpha\}^{\prime}L_a$.
\begin{theorem}\cite[Proposition 4.6]{M.M.Malamud 2}
\label{pr6.3}
Suppose
      $
      A=\bigoplus_{i=1}^n \lambda_iJ^\alpha
      $
      is defined on
      $
      \bigoplus_{i=1}^{n}X
      $ and
      $\lambda_i$ satisfy condition \eqref{arg=arg}. Set also
      $a_{ij}:=s_i^{-1}s_j$ for $i,j\in\{1,\dots,n\}$.
      Then the commutant $\{A\}^{\prime}$ is of the form
\begin{equation*}
      \{A\}^{\prime}=
      \{
      K:\ K=(K_{ij})_{i,j=1}^n,\
      \ K_{ij}\in L_{a_{ij}}\{J^{\alpha}\}^{\prime}
      \}.
\end{equation*}
\end{theorem}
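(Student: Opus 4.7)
The plan is to reduce the commutation relation blockwise and then invoke the known description of operators intertwining powers of $J$.

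Writing $K=(K_{ij})_{i,j=1}^n$ with $K_{ij}\in[X]$, the identity $AK=KA$ is equivalent to the system
\begin{equation*}
\lambda_i J^\alpha K_{ij}=\lambda_j K_{ij} J^\alpha,\qquad i,j\in\{1,\dots,n\}.
\end{equation*}
Using $\lambda_j/\lambda_i=(s_i/s_j)^\alpha=a_{ij}^{-\alpha}$, each block equation becomes
\begin{equation*}
J^\alpha K_{ij}=a_{ij}^{-\alpha}\,K_{ij}\,J^\alpha.
\end{equation*}
Thus the task reduces to describing, for an arbitrary $a>0$, all bounded operators $Y\in[X]$ satisfying $J^\alpha Y=a^{-\alpha}YJ^\alpha$.

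The key algebraic identity I would verify first is
\begin{equation*}
J^\alpha L_a=a^{-\alpha}\,L_a J^\alpha,\qquad a>0.
\end{equation*}
For $0<a\leqslant 1$ this is a one-line change of variables $u=at$ in the integral defining $J^\alpha L_a f$; for $a>1$ the two-piece definition of $L_a$ in \eqref{neweq2} yields the same conclusion by an analogous substitution, the zero part on $[0,1-a^{-1}]$ being exactly what is needed to match the support of $L_a J^\alpha f$. Granted this identity, the inclusion ``$\supseteq$'' of the theorem is immediate: for any $K'\in\{J^\alpha\}'$,
\begin{equation*}
J^\alpha L_{a_{ij}}K'=a_{ij}^{-\alpha}L_{a_{ij}}J^\alpha K'=a_{ij}^{-\alpha}L_{a_{ij}}K' J^\alpha,
\end{equation*}
and assembling such blocks produces an element of $\{A\}'$.

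The converse inclusion is the main obstacle and is where the genuine content lies: one must show that every bounded solution $Y$ of $J^\alpha Y=a^{-\alpha}Y J^\alpha$ actually factors as $Y=L_a K'$ for some $K'\in\{J^\alpha\}'$. This classification of the bounded solutions of equation \eqref{1.8A} with $\lambda\in(0,+\infty)$ was obtained in \cite{M.M.Malamud 1, M.M.Malamud 2} and is referenced in the introduction; we invoke it directly to close the proof, obtaining $K_{ij}\in L_{a_{ij}}\{J^\alpha\}'$ for all $i,j$.
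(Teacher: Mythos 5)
Your proposal is correct, and it matches the paper's treatment: the paper gives no proof of this theorem at all, citing it as Proposition 4.6 of \cite{M.M.Malamud 2}, and your blockwise reduction to the single intertwining equation $J^\alpha Y=a^{-\alpha}YJ^\alpha$ followed by the classification of its bounded solutions is precisely the argument that lives in that reference. One caution: within this paper the single-block classification appears as Corollary \ref{mainfor01}, which the authors \emph{derive from} Theorem \ref{pr6.3} (together with Theorem \ref{split}), so for the converse inclusion you must cite the external source directly, as you do, rather than the in-paper corollary, or the argument becomes circular.
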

Next we complete Theorem \ref{th5.3alg} by establishing the
Neumann type identity, $\{A\}''= \Alg A$.  Note, that for the case
$p=2$ and $\alpha =1$ it follows from a general result of B.S.-Nagy and
C. Foias \cite{B. Sz.-Nagy and C. Foias 2} on a dissipative operator  with finite dimensional
imaginary part.

\begin{theorem}\label{NewmannforLp}
Suppose
      $
      A=\bigoplus_{i=1}^n \lambda_iJ^\alpha
      $
      is defined on
      $
      \bigoplus_{i=1}^{n}X
      $ and
      $\lambda_i$ satisfy condition \eqref{arg=arg}.
 Then
$\{A\}''= \Alg A$.
\end{theorem}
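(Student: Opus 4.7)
The inclusion $\Alg A \subseteq \{A\}''$ is automatic since $A,\mathbb{I}\in \{A\}'$ and the double commutant is weakly closed. For the converse, I take $T=(T_{ij})_{i,j=1}^n \in \{A\}''$ and proceed in two stages, using Theorem~\ref{pr6.3} to manufacture convenient elements of $\{A\}'$.

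First, I show $T$ is block-diagonal with diagonal entries in $\{J^\alpha\}'$. For each $i$, the block matrix $K^{(i)}$ having $J^\alpha$ in position $(i,i)$ and zeros elsewhere lies in $\{A\}'$, since $a_{ii}=1$ gives $L_{a_{ii}}\{J^\alpha\}' = \{J^\alpha\}'$ and $0 \in L_{a_{jl}}\{J^\alpha\}'$. Expanding $TK^{(i)}=K^{(i)}T$ entrywise yields $T_{ji}J^\alpha = 0$ for $j\ne i$ and $J^\alpha T_{il}=0$ for $l\ne i$. Since $J^\alpha$ is injective on $L_p[0,1]$ and has dense range (the adjoint backward Riemann--Liouville operator is injective), continuity forces $T_{ji}=T_{il}=0$ for $j,l\ne i$, while the $(i,i)$-entry gives $T_{ii}\in\{J^\alpha\}' = \Alg J^\alpha$. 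By the representation \eqref{convoper}, each diagonal block has the form $(T_{ii}f)(x)=\frac{d}{dx}\int_0^x t_i(x-s)f(s)\,ds$ with $t_i\in L_{p'}[0,1]$.

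Second, I link the kernels $t_i$ to $t_1$ using the dilations $M_i$ introduced in \eqref{neweq4new}. Because $s_1=1$, we have $a_{i1}=s_i^{-1}$ and hence $L_{a_{i1}}=L_{s_i^{-1}}=M_i$, so the block matrix $\tilde K^{(i)}$ with $M_i$ in position $(i,1)$ and zeros elsewhere lies in $\{A\}'$. The commutation $T\tilde K^{(i)}=\tilde K^{(i)}T$, with $T$ now diagonal, reduces to $T_{ii}M_i = M_i T_{11}$. Integrating this operator identity from $0$ to $z$ and changing variables $u=s_i^{-1}s$ in the resulting convolutions converts it into
\begin{equation*}
\int_0^z \bigl[t_i(z-s)-t_1(s_i^{-1}(z-s))\bigr](M_i f)(s)\,ds = 0,
\qquad f\in L_p[0,1],\ z\in[0,1].
\end{equation*}
Since $M_i$ is surjective onto $L_p[0,1]$ (e.g.\ $M_i\chi_{[0,s_i^{-1}]}\equiv 1$), differentiating in $z$ with a suitable choice of $f$ yields $t_i(\tau)=t_1(s_i^{-1}\tau)$ a.e.\ on $[0,1]$. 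Comparison with the description \eqref{Ri} of $\Alg A$ from Theorem~\ref{th5.3alg} then places $T$ in $\Alg A$, completing the proof.

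The sole non-routine step is translating the operator identity $T_{ii}M_i=M_iT_{11}$ into the pointwise kernel equality $t_i(\tau)=t_1(s_i^{-1}\tau)$; this is a direct change-of-variable computation combined with the surjectivity of $M_i$, and presents no deeper obstacle.
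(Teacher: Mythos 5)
Your argument is correct and follows essentially the same route as the paper: reduce $T\in\{A\}''$ to block-diagonal form with diagonal entries in $\{J^\alpha\}'$, then commute with the intertwiners $L_{a_{i1}}=M_i$ (elements of $\{A\}'$ by Theorem~\ref{pr6.3}) to force $t_i(x)=t_1(s_i^{-1}x)$ and invoke Theorem~\ref{th5.3alg}. The only cosmetic difference is in the first stage, where the paper cites the general inclusion $\{T_1\oplus T_2\}''\subset\{T_1\}''\oplus\{T_2\}''$ while you rederive block-diagonality by hand via the matrices $K^{(i)}$ (commuting with the coordinate projections $P_i\in\{A\}'$ would do this even more directly).
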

\begin{proof}
It is  known (and easily seen) that if $T_1$ and $T_2$ are bounded
operators on a Banach space $Y$, then  $\{T_1\oplus T_2\}''\subset
\{T_1\}''\oplus \{T_2\}''$. Hence $\{A\}''=\{\bigoplus_{i=1}^n
\lambda_iJ^\alpha\}''\subset \bigoplus_{i=1}^n\{
\lambda_iJ^\alpha\}''.$ \  It follows that any  $R\in \{A\}''$
admits a direct sum decomposition $R=\bigoplus_{i=1}^nR_i$ with
$R_i\in \{\lambda_iJ^\alpha\}'' = \{\lambda_iJ^\alpha\}', \  i\in
\{1,..., n\}$.  According to  \eqref{convoper}  $R_i$ admits a
representation  $(R_if)(x)= \frac{d}{dx}\int_0^xr_i(x-t)f(t)\,dt$,
where  $r_i\in L_{p'}[0,1]$ and it is such that $R_i\in [X]$.

Further, let  $K=(K_{ij})_{i,j=1}^n$ be an operator matrix  with
entries $K_{ij}= L_{a_{ij}}$ for $i>j$ and $K_{ij}=\mathbb O$ for
$i\leq j$. Let also
 $a_{ij}:=s_i^{-1}s_j$ for $i,j\in\{1,\dots,n\}$.
 Then, by Theorem \ref{pr6.3}, $K\in \{A\}'$. Clearly,  relation $RK=KR$
 yields
\begin{flalign}\label{2.11}
R_iL_{a_{i1}}=L_{a_{i1}}R_1, \qquad i\in\{2,\dots,n\}.
\end{flalign}
It is easily seen that
\begin{equation}\label{2.12}
(R_iL_{a_{i1}}f)(x)= \frac{d}{dx}\int_0^xr_i(x-t)f(s_i^{-1}t)\,dt,
\quad i\in\{2,\dots,n\}.
\end{equation}
 On the other hand,
\begin{align*}
(L_{a_{i1}}R_1 f)(x)
=&\frac{d}{dx_1}\int_0^{x_1}r_1(x_1-t)f(t)\,dt\Big\vert_{x_1=s_i^{-1}x}
\\
=s_i&\frac{d}{dx}\int_0^{s_i^{-1}x}r_1(s_i^{-1}x-t)f(t)\,dt
=\frac{d}{dx}\int_0^xr_1(s_i^{-1}(x-t))f(s_i^{-1}t)\,dt.
\end{align*}
Comparing  this relation with \eqref{2.12} and taking into account
\eqref{2.11} and the obvious relation  $\ran(L_{a_{i1}}) = X$, we
obtain $r_i(x)=r_1(s_i^{-1}x), \ i\in\{2,\dots,n\}.$
 By Theorem \ref{th5.3alg}, this means that $R\in \Alg A$, that is
 $\{A\}''\subset \Alg A$. Since the inclusion $\{A\}''\supset \Alg A$
is obvious, we get $\{A\}''= \Alg A$.
\end{proof}
In the following theorem we obtain  a description of $\Lat A$
similar to that of $\Lat T$ for $C_0$-contractions $T$ described
in Theorem \ref{Halmos}.  It is interesting to note that though a
description is completely the same, the operator  $A$ in not
accretive in $L_2[0,1]$ for $\alpha >1$ (cf. Remark
\ref{remarkHalmos} $(i)$).
\begin{theorem}\label{th5.3Halmos}
      Let
      $
      A=\bigoplus_{i=1}^n \lambda_iJ^\alpha
      $
      be defined on
      $
      \bigoplus_{i=1}^{n}X
      $
      and $\lambda_i$ satisfy conditions \eqref{arg=arg}.
       Then every invariant subspace of $A$ is the closure of the
        range $($ the kernel$)$  of a bounded linear transformation that commutes with
        $A$.
\end{theorem}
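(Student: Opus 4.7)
My plan is to deduce the theorem from Remark \ref{remarkHalmos}(ii), which transfers the Halmos-type description in Theorem \ref{Halmos} across a quasisimilarity. Hence the bulk of the work reduces to realizing $A$ as quasisimilar to a $C_0$-contraction on a Hilbert space.

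First, I would pass to a Hilbert-space model. Multiplying $A$ by a suitable complex scalar preserves condition \eqref{arg=arg} and affects neither $\Lat A$ nor $\{A\}'$, so we may assume $\lambda_i>0$ for every $i$ and, after a further positive rescaling, that
\[
T := \bigoplus_{i=1}^n \lambda_i J^\alpha
\]
acting on the Hilbert space $H := \bigoplus_{i=1}^n L_2[0,1]$ satisfies $\|T\|_H<1$. On $L_2[0,1]$ each summand $\lambda_i J^\alpha$ is a compact quasinilpotent Volterra operator, hence a completely non-unitary contraction with spectrum equal to the singleton $\{0\}$; by standard Sz.-Nagy--Foias theory (and Sarason's treatment in the case $\alpha=1$) each such summand is of class $C_0$, and a finite direct sum of $C_0$-contractions is itself $C_0$, with minimal function the least common multiple of those of the summands. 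Thus $T$ is a $C_0$-contraction on $H$.

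Next, I would exhibit an explicit quasisimilarity using the Volterra operator $J$ itself. Set
\[
K := J\otimes \mathbb I_n \colon \bigoplus_{i=1}^n L_p[0,1]\longrightarrow H,\qquad L := J\otimes \mathbb I_n \colon H\longrightarrow \bigoplus_{i=1}^n L_p[0,1].
\]
Both are bounded, because $J$ carries any $L_q[0,1]$ ($q\in(1,\infty)$) continuously into $C[0,1]$ via $\|Jf\|_\infty\le\|f\|_1\le\|f\|_q$. Both are injective, since $\ker J=\{0\}$. Both have dense range, since $JL_q[0,1]=W^{1}_{q,0}[0,1]$ contains every smooth function vanishing at $0$, and such functions are dense in $L_r[0,1]$ for every $r\in(1,\infty)$. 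The intertwinings $KA=TK$ and $AL=LT$ are immediate from the commutativity of $J$ with $J^\alpha$. Hence $A$ is quasisimilar to the $C_0$-contraction $T$.

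Applying Remark \ref{remarkHalmos}(ii) to this quasisimilarity then yields
\[
\Lat A = \{\ker C : C\in\{A\}'\} = \{\overline{\ran C} : C\in\{A\}'\},
\]
which is the assertion of the theorem. I expect the only delicate point to be the $C_0$-classification of $T$: compactness, quasinilpotence and the singleton spectrum of each summand reduce it to the classical fact that $J^\alpha$ is of class $C_0$ on $L_2[0,1]$, combined with the stability of the $C_0$ class under finite direct sums. Once this is granted, the construction of $K$ and $L$ and the verification of the intertwinings are routine, and the conclusion is a direct appeal to Remark \ref{remarkHalmos}(ii).
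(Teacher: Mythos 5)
The decisive step in your argument --- that after rescaling, $T=\bigoplus_{i=1}^n\lambda_iJ^\alpha$ on $\bigoplus_{i=1}^nL_2[0,1]$ is a $C_0$-contraction --- is false, and this is where the proof breaks down. A contraction of class $C_0$ must be annihilated by a nonzero $u\in H^\infty$; if $\|T\|<1$ then $u(T)=\sum_n\hat u(n)T^n$ converges in norm, so writing $u=z^kv$ with $v(0)\ne 0$ and using that $T^k$ is injective with dense range one gets $v(T)=0$, i.e. $v(0)\mathbb I$ equals a compact operator, which is impossible in infinite dimensions. Hence no scalar multiple of $J^\alpha$ is of class $C_0$: quasinilpotence plus contractivity does not place an operator in $C_0$. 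What the classical theory actually provides (and what Remark \ref{remarkHalmos}$(i)$ encodes) is that the \emph{Cayley transform} of a quasinilpotent \emph{accretive} operator with finite-dimensional real part is a $C_0$-contraction. But for $\alpha>1$ the operator $J^\alpha$ is not accretive on $L_2[0,1]$ --- the paper notes this explicitly just before the theorem --- so even the Cayley-transform route is unavailable for your $T$ as it stands.

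The missing idea is the reduction to $\alpha=1$. The paper first observes, using the description of $\Alg A$ in Theorem \ref{th5.3alg} (which is independent of $\alpha$ and of $\lambda_1$), that $\Alg A=\Alg A_1$ for $A_1:=\bigoplus_{i=1}^n s_i^{-1}J$; since $\Lat T=\Lat(\Alg T)$ and $\{T\}'=(\Alg T)'$, this yields $\Lat A=\Lat A_1$ and $\{A\}'=\{A_1\}'$, so one may assume $\alpha=1$ and $\lambda_i>0$ from the outset. Only then does one run the quasisimilarity argument --- with the same intertwiners $K=L=\bigoplus J$ that you construct, and that part of your proposal is sound --- to land on an accretive quasinilpotent operator $\bigoplus s_iJ$ on $\bigoplus L_2[0,1]$ whose real part has rank $n$, and invoke Remark \ref{remarkHalmos}$(i)$--$(ii)$. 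Without the algebra identity from Theorem \ref{th5.3alg}, your argument cannot get off the ground for $\alpha>1$, and even for $\alpha\le 1$ you would need to replace the $C_0$ claim by the accretive version of Theorem \ref{Halmos}.
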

\begin{proof}
Alongside the operator $A$ we consider the operator
$A_1:=\bigoplus_{i=1}^n \lambda_1s_i^{-1}J.$ By Theorem
\ref{th5.3alg}, $\Alg A=\Alg (\bigoplus_{i=1}^n
\lambda_1s_i^{-\alpha} J^\alpha)= \Alg (\bigoplus_{i=1}^n
\lambda_1s_i^{-1}J)= \Alg A_1$ . Hence  $\Lat A=\Lat A_1$ and
$\{A\}'=\{A_1\}'$. So we can assume that $\lambda_1=1$ and
$\alpha=1$. We put
\begin{align*}
 K:=&\bigoplus_{i=1}^n
J\in \biggl[\bigoplus_{i=1}^nL_p[0,1],\bigoplus_{i=1}^nL_2[0,1]\biggr],\\
L:=&\bigoplus_{i=1}^n J\in
\biggl[\bigoplus_{i=1}^nL_2[0,1],\bigoplus_{i=1}^nL_p[0,1]\biggr],\\
B:=&\bigoplus_{i=1}^n s_iJ\in \biggl[\bigoplus_{i=1}^{n}L_2[0,1]\biggr].
\end{align*}
It is clear that $\ker K=\{0\}$, $\ker L=\{0\}$, $\overline{\ran
K}=\bigoplus_{i=1}^nL_2[0,1]$, $\overline{\ran
L}=\bigoplus_{i=1}^nL_p[0,1]$, $KA_1 = BK$ and $A_1L = LB$. Hence
$A_1$ is quasisimilar to $B$. So, we can assume that $A_1$ is
defined on $\bigoplus_{i=1}^nL_2[0,1]$. Note that  $A_1$ is
accretive,  since $s_i>0$ for $i\in \{1,...,n\}$.  Now the
assertions of the theorem follow from Theorem \ref{Halmos} (see
also Remark \ref{remarkHalmos} $(i)$).
\end{proof}
Next, we recall a description of $\Hyplat A$.
\begin{theorem}\label{theorem29} \cite[Proposition 4.8]{M.M.Malamud 2}
Suppose
      $
      A=\bigoplus_{i=1}^n \lambda_iJ^\alpha
      $
      is defined on
      $
      \bigoplus_{i=1}^{n}X
      $
      and $\lambda_i$ satisfy condition \eqref{arg=arg}.
      Then
  the lattice $\Hyplat A$ is of the form
      \begin{equation*}
             \Hyplat  A=
             \biggl\{\bigoplus_{i=1}^nE_{a_i}:\ (a_1,\ldots,a_n)
             \in P(s_1,\ldots,s_n)\biggr\},
      \end{equation*}
      where
      \begin{equation*}
      \begin{split}
             P(s_1,\ldots,s_n):=
             \bigl\{&(a_1,\ldots,a_n)\in [0,1]^n:\qquad\qquad \\
              &s_ia_{i+1}\leqslant s_{i+1}a_i
             \leqslant s_{i+1}-s_i+s_ia_{i+1},\
             1\leqslant i\leqslant n-1\bigr\}.
      \end{split}
      \end{equation*}
\end{theorem}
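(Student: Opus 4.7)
The plan is to combine the commutant description of Theorem~\ref{pr6.3} with the description $\Hyplat J^\alpha = \{E_a : a \in [0,1]\}$ from~\eqref{1point1}. A hyperinvariant subspace $H \subset \bigoplus_{i=1}^n X$ must be invariant under every $K \in \{A\}'$, so I would use a short menu of well-chosen test operators from $\{A\}'$ to pin down first the direct-sum-of-$E_{a_i}$ form of $H$ and then the pairwise inequalities on $(a_1,\ldots,a_n)$, and finally verify that those inequalities already suffice for invariance under the full commutant.

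For necessity, first note that each coordinate projection $P_i$ lies in $\{A\}'$: take the block matrix with $\mathbb I \in L_1\{J^\alpha\}' = \{J^\alpha\}'$ in the $(i,i)$-slot and zeros elsewhere. Invariance of $H$ under $P_1,\ldots,P_n$ then forces $H = \bigoplus_{i=1}^n H_i$ with $H_i := P_i H \subset X$. Placing an arbitrary $M \in \{J^\alpha\}'$ in the $(i,i)$-slot and zeros elsewhere again produces an element of $\{A\}'$, so each $H_i$ must be hyperinvariant for $J^\alpha$, and~\eqref{1point1} yields $H_i = E_{a_i}$ for some $a_i \in [0,1]$. Finally, choosing $L_{a_{ij}}$ in the $(i,j)$-slot (with $a_{ij} = s_j/s_i$) and zeros elsewhere produces yet another element of $\{A\}'$, whose invariance of $\bigoplus_{i=1}^n E_{a_i}$ forces $L_{a_{ij}} E_{a_j} \subset E_{a_i}$ for every pair $i,j$.

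The core calculation is then the explicit identification of $L_a E_b$ directly from definition~\eqref{neweq2}: $L_a E_b = E_{b/a}$ for $0 < a \leq 1,\ b \leq a$ (and $\{0\}$ otherwise), while $L_a E_b = E_{(a-1+b)/a}$ for $a > 1$. Substituting $a = s_j/s_i$ and $b = a_j$ into $L_a E_b \subset E_{a_i}$ yields $s_j a_i \leq s_i a_j$ when $j < i$ and $s_j a_i \leq s_j - s_i + s_i a_j$ when $j > i$ (the degenerate subcase $a_j > s_j/s_i$ makes $L_a E_b = \{0\}$ and delivers the first inequality for free from $a_i \leq 1$). Rewriting these as monotonicity in $i$ of the ratios $a_i/s_i$ and $(1-a_i)/s_i$, I would then observe that the full family of pairwise inequalities is equivalent to requiring them only for adjacent indices $j = i+1$, which recovers the defining inequalities of $P(s_1, \ldots, s_n)$.

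For sufficiency, take any $K = (K_{ij})_{i,j=1}^n \in \{A\}'$ and use Theorem~\ref{pr6.3} to write $K_{ij} = L_{a_{ij}} M_{ij}$ with $M_{ij} \in \{J^\alpha\}'$. Hyperinvariance of $E_{a_j}$ under $J^\alpha$ gives $M_{ij} E_{a_j} \subset E_{a_j}$, and the inequalities established above give $L_{a_{ij}} E_{a_j} \subset E_{a_i}$, so $K_{ij} E_{a_j} \subset E_{a_i}$, and hence $K\bigl(\bigoplus_{i=1}^n E_{a_i}\bigr) \subset \bigoplus_{i=1}^n E_{a_i}$. The main obstacle I foresee is the $a > 1$ branch of the $L_a E_b$ calculation, where $L_a$ first annihilates $[0, 1-a^{-1}]$ and then stretches the rest; getting the effective starting point $1 - (1-b)/a$ right, and tracking how this converts into the right-hand bound $s_j - s_i + s_i a_j$ after the substitution $a = s_j/s_i$, is the most delicate piece of bookkeeping, but once that formula is in hand the remaining reduction to adjacent pairs is routine.
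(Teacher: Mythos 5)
Theorem \ref{theorem29} is stated in the paper without proof --- it is quoted from \cite[Proposition 4.8]{M.M.Malamud 2} --- so there is no in-paper argument to compare against; judged on its own, your proposal is correct and is the natural derivation from the commutant description of Theorem \ref{pr6.3}. The test operators (coordinate projections, diagonal blocks $M\in\{J^\alpha\}'$, and single off-diagonal blocks $L_{a_{ij}}$) all do lie in $\{A\}'$, the computation $L_aE_b\subseteq E_{b/a}$ for $a\leqslant 1$ (with collapse to $\{0\}$ when $b\geqslant a$) and $L_aE_b\subseteq E_{(a-1+b)/a}$ for $a>1$ is right, and substituting $a_{ij}=s_j/s_i$ correctly produces $s_ja_i\leqslant s_ia_j$ for $j<i$ and $s_ja_i\leqslant s_j-s_i+s_ia_j$ for $j>i$, which are exactly the monotonicity of $a_i/s_i$ and of $(1-a_i)/s_i$ and hence reduce to the adjacent-index inequalities defining $P(s_1,\ldots,s_n)$; the sufficiency direction via $K_{ij}=L_{a_{ij}}M_{ij}$ closes the loop.
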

\begin{definition}$($ cf.
\cite{N.K.Nikolskii}$)$\label{definitioncyclic}
\begin{itemize}
      \item[(1)]  A subspace $E$ of a Banach space $X_1$ is called
      a cyclic subspace for an operator $T\in [X_1]$ if\ \
      $\myspan  \{T^n E :\ n \geqslant 0\} = X_1$;
      \item[(2)] a vector $f(\in X_1)$ is called cyclic for $T$ if\ \ $\myspan  \{T^nf:\ n\geqslant
       0\}=X_1$;
      \item[(3)] the set of all cyclic subspaces of an operator $T$
        is denoted by $\Cyc T$.
\end{itemize}
\end{definition}
\begin{definition}
\begin{itemize}
      \item[(1)] The number
      \begin{equation*}
            \mu_T:=
            \inf_E\{ \dim E:\ E
            \text{ is a cyclic subspace of the operator } T \text{ on } X_1\}
      \end{equation*}
      is called the  spectral\ multiplicity of
      an operator $T$ on $X_1$;\\
     \item[(2)] operator $T$ is called cyclic if $\mu_T=1$.
\end{itemize}
\end{definition}
It is well known that the concept of spectral multiplicity plays an
important role in control theory (see for instance \cite{Wonham}).
Investigating some other problems of control theory, N.K.
Nikol'skii and V.I. Vasjunin \cite{Nikolskii and Vasjunin}
introduced one more  "cyclic" characteristic of an operator.
\begin{definition}\cite{Nikolskii and Vasjunin}
       Let $T\in[X]$. Then
      $$
      \disc T:=\sup\limits_{E\in\Cyc T}\min\{\dim E^{\prime}:\ E^{\prime}\subset E,E^{\prime}\in\Cyc T\}.
      $$
       $\disc T$ is called a disc-characteristic of an operator
       $T$. $($"disc" is the abbreviation of "Dimension of the Input Subspace of
       Control".$)$
\end{definition}
Clearly,  $\disc T\geqslant \mu_T$.

To present a description of $\Cyc A$ we recall  the following
definition.
      \begin{definition} \cite{M.M.Malamud 1, M.M.Malamud 2, M.S.Nikol'skii})
\label{Def5.2}
      The determinant of a functional matrix
      $
      F(x)=(f_{ij}(x))_{i,j=1}^n
      $
      $(f_{ij}\in X)$
      calculated with respect to the convolution product
      \begin{equation*}
             (f*g)(x) =
             \int_0^x f(x-t)g(t)\,dt =
             \int_0^x g(x-t)f(t)\,dt = (g*f)(x)
      \end{equation*}
      is called $*$ - determinant
      and is denoted by $*-\det F(x)$.
          Similarly, $*$ - minors of $F(x)$ are the minors calculated
          with respect to the convolution product.
      $\srank F(x)$ will is the highest order of $*$-minors of $F(x)$
      satisfying $\varepsilon$-condition \eqref{equ5.1}.
\end{definition}
Next we complete \cite[Theorem 2.3]{M.M.Malamud 2} by computing $\disc A$.
\begin{theorem}\label{th5.3}
      Suppose
      $
      A=\bigoplus_{i=1}^n \lambda_iJ^\alpha
      $
      is defined on
      $
      \bigoplus_{i=1}^{n}X
      $
      and $\lambda_i$ satisfy condition \eqref{arg=arg}.
      Then the system $\{f_l\}_{l=1}^N$ of vectors
      \begin{flalign*}
&\ & f_l=f_{l1}\oplus\dots\oplus f_{ln}\in
             \bigoplus_{i=1}^{n}X,
             &\ &
            l\in\{1,\dots,N\},\ \ i\in\{1,\dots,n\}
      \end{flalign*}
      generates a  cyclic subspace  for the operator $A$
      if and only if
\begin{itemize}
      \item[(i)]
      $
      N\geqslant n
      $;
      \item[(ii)]
      the matrix
      \begin{equation*}
             F_n(x)=
             \left(
                \begin{matrix}
                f_{11}(s_1x)&f_{12}(s_2x)&\ldots&f_{1n}(s_nx)\\
                \vdots&\vdots&&\vdots                  \\
                f_{N1}(s_1x)&f_{N2}(s_2x)&\ldots&f_{Nn}(s_nx)
               \end{matrix}
             \right)
      \end{equation*}
      is of maximal $\srank$ , namely, $\srank F_n(x)=n$;
      \item[(iii)]
      $\disc A=\mu_A=n$.
\end{itemize}
\end{theorem}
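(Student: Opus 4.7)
The plan is to treat the equivalence of cyclicity with (i) and (ii), together with the identity $\mu_A = n$, as essentially the content of \cite[Theorem 2.3]{M.M.Malamud 2}; the genuinely new assertion of the theorem is the computation $\disc A = n$ in (iii). My strategy is to recover the cyclic-subspace criterion by a reduction to the scalar case via Theorem \ref{th5.3alg} and then derive $\mu_A = n$ and $\disc A = n$ from it.

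First I would exploit the identities $A_i^m = M_i A_1^m N_i$ and $M_i N_i = \mathbb I$ used in the proof of Theorem \ref{th5.3alg}, where $A_i := \lambda_i J^\alpha$ and the intertwiners $M_i, N_i$ are defined in \eqref{neweq4new}. They yield
\begin{equation*}
A^m f_l = \bigoplus_{i=1}^n M_i A_1^m (N_i f_{li}), \qquad m \in \mathbb Z_+, \quad l \in \{1, \dots, N\},
\end{equation*}
and, because each $M_i$ has dense range in $X$, the closure of $\myspan\{A^m f_l : m \geqslant 0, l\}$ is controlled coordinatewise by the $\Alg(\lambda_1 J^\alpha)$-invariant hull of the rescaled traces $(N_i f_{li})(x) = f_{li}(s_i x)\,\chi_{[0, s_i^{-1}]}(x)$. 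The $\varepsilon$-criterion \eqref{equ5.1} for cyclicity of $\lambda_1 J^\alpha$ combined with the $*$-determinant calculus of Definition \ref{Def5.2} then converts the vectorial cyclicity statement into the single requirement $\srank F_n(x) = n$, proving the equivalence of cyclicity with (i) and (ii). The inequality $N \geqslant n$ in (i) is forced, since any matrix with fewer than $n$ rows has $\srank < n$.

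The identity $\disc A = n$ splits into two bounds. For the upper bound $\mu_A \leqslant n$ I would exhibit the explicit system $f_l = 0 \oplus \cdots \oplus \one \oplus \cdots \oplus 0$ with $\one$ in the $l$-th slot; the matrix $F_n$ is then diagonal with non-vanishing entries $\one(s_i x) \equiv 1$, so (ii) holds and this system is cyclic. Combined with $\mu_A \geqslant n$ forced by (i), this yields $\mu_A = n$, hence $\disc A \geqslant \mu_A = n$. For $\disc A \leqslant n$, let $E \in \Cyc A$ be generated by $\{f_l\}_{l \in \Lambda}$. By (ii) some $n \times n$ submatrix of the associated matrix $F_n$, formed from rows $l_1 < \dots < l_n$, has $*$-determinant satisfying the $\varepsilon$-condition. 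Applying the already-established equivalence of cyclicity with (i) and (ii) in the converse direction to the finite subfamily $\{f_{l_j}\}_{j=1}^n \subset E$ shows that it generates a cyclic subspace of $A$ contained in $E$ and of dimension at most $n$. Taking the supremum over $E$ yields $\disc A \leqslant n$ and hence $\disc A = n$.

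The main conceptual pitfall I foresee is that the witness of $\srank F_n = n$ must come from a literal choice of $n$ of the generators rather than from a linear combination of rows; this is not automatic for abstract rank notions, but Definition \ref{Def5.2} encodes $\srank$ in terms of minors of the original functional matrix, so the literal row-selection is legitimate. A secondary technical point is that when $E$ is infinite-dimensional one should first replace a dense generating system by a countable one and observe that $\srank F_n = n$ is already witnessed by finitely many of those rows before applying the extraction argument above.
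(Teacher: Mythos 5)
Your proposal is correct and follows essentially the same route as the paper: parts (i), (ii) and $\mu_A=n$ are delegated to \cite[Theorem 2.3]{M.M.Malamud 2}, and $\disc A=n$ is obtained by extracting from $F_n(x)$ an $n\times n$ submatrix of full $\srank$ and observing that the corresponding $n$ generators already span a cyclic subspace, which is exactly the paper's argument. Your additional touches (the explicit diagonal system of constants witnessing $\mu_A\leqslant n$, and the remark on reducing an infinite-dimensional cyclic $E$ to finitely many witnessing rows) are sound and, if anything, slightly more careful than the published proof, which only treats finite-dimensional $E$.
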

\begin{proof}
$(i)$, $(ii)$ and the equality $\mu_A=n$ were proved in \cite[Theorem
2.3]{M.M.Malamud 2}(see also  \cite[Proposition 3.2]{I.Yu.
Domanov0} for another proof).

$(iii)$
 Let us prove that $\disc A=n$. Let
$E=\myspan\{f_1,\dots,f_N\}$ be an $N$-dimensional subspace cyclic
for the operator $A$. It is necessary to show that this space
contains an $n$-dimensional subspace which is also cyclic for the
operator $A$. Since $\srank F_n(x)=n$, it follows that there
exists an $n\times n$ submatrix $G_n(x)$ of $F_n(x)$ such that
$\srank G_n(x)=n$. Hence we can choose $n$- vectors
$f_{i_1},\dots, f_{i_n}$ $(i_1,\dots,i_n\in\{1,\dots, N\})$ such
that $\myspan\{f_{i_1},\dots, f_{i_n}\}$ is a cyclic subspace for
$A$.
\end{proof}
\begin{corollary}\label{cor216}
 Let $K\in\{J^\alpha\}'$ and $K_n=\bigoplus_{i=1}^n K$ be defined on $\bigoplus_{i=1}^n L_2[0,1]$.
Then $\mu_{K_n}\geq n$.
\end{corollary}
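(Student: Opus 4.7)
The plan is to compare $K_n$ with the auxiliary operator $A:=\bigoplus_{i=1}^n J^\alpha$ acting on the same space $\bigoplus_{i=1}^n L_2[0,1]$. Taking $\lambda_1=\dots=\lambda_n=1$ (so $s_1=\dots=s_n=1$) in Theorem \ref{th5.3} gives $\mu_A=n$; hence it is enough to show that every cyclic subspace of $K_n$ is also cyclic for $A$, since then $\dim E\geqslant n$ for every $E\in\Cyc K_n$ and the conclusion follows by passing to the infimum.

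The key ingredient is the identity $\{J^\alpha\}'=\Alg J^\alpha$ recalled in the introduction (see the discussion around \eqref{convoper}). By hypothesis $K\in\{J^\alpha\}'$, so $K=w\text{-}\lim p_m(J^\alpha)$ for some sequence of polynomials $p_m$. Since weak-operator convergence on $L_2[0,1]$ is preserved under taking direct sums, the identity $p_m(A)=\bigoplus_{i=1}^n p_m(J^\alpha)$ yields $p_m(A)\to K_n$ weakly, so $K_n\in\Alg A$. The same reasoning (or the fact that $\Alg A$ is an algebra) shows $K_n^j\in\Alg A$ for every $j\geqslant 0$.

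Now let $E$ be any cyclic subspace for $K_n$ and fix $f\in E$ and $j\geqslant 0$. Writing $K_n^j$ as a weak-operator limit of polynomials $q_m(A)$, one has $q_m(A)f\to K_n^j f$ weakly in the Hilbert space $\bigoplus_{i=1}^n L_2[0,1]$; by Mazur's theorem, suitable convex combinations of $q_m(A)f$ converge to $K_n^j f$ in norm, and therefore
\begin{equation*}
K_n^j f\ \in\ \myspan\{A^k g:g\in E,\ k\geqslant 0\}.
\end{equation*}
Consequently $\myspan\{K_n^j g:g\in E,\ j\geqslant 0\}\subseteq \myspan\{A^k g:g\in E,\ k\geqslant 0\}$. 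Since the left-hand side equals the whole space, so does the right-hand side, i.e.\ $E\in\Cyc A$. Theorem \ref{th5.3} then forces $\dim E\geqslant n$, proving $\mu_{K_n}\geqslant n$.

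The only subtle point is the weak-to-norm passage on the vector level, supplied by Mazur's theorem; the algebraic identity $p(\bigoplus J^\alpha)=\bigoplus p(J^\alpha)$ and the preservation of weak-operator limits under direct sums are routine. I do not expect any genuine obstacle.
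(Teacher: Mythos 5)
Your proof is correct and follows essentially the same route as the paper: the paper also establishes $K_n\in\Alg A$ for $A=\bigoplus_{i=1}^n J^\alpha$ (citing the description of $\Alg A$ in Theorem \ref{th5.3alg} rather than taking weak-operator limits directly) and then concludes $\mu_{K_n}\geqslant\mu_A=n$ from Theorem \ref{th5.3}. The only remark worth making is that your explicit justification of the step ``$T\in\Alg S$ implies $\Cyc T\subseteq\Cyc S$'' via weak limits is a detail the paper leaves implicit, and the appeal to Mazur's theorem can be replaced by the simpler observation that a norm-closed linear subspace is weakly closed.
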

\begin{proof}
 It follows from Theorem \ref{th5.3alg} that $K_n\in \Alg A$, where $A=\bigoplus_{i=1}^n J$ is defined $\bigoplus_{i=1}^n L_2[0,1]$. Hence, by Theorem  \ref{th5.3} $\mu_{K_n}\geq \mu_A=n$.
\end{proof}
\begin{remark}
In the recent paper \cite[Proposition 7.6]{Bermudo and Rodriguez Shkarin} Corollary \ref{cor216}
was proved for the case $n=2$.
\end{remark}
Next we recall the following notation.
       Let $T_j\in[X_j]$ $(j=1,2)$  and
       $R\in \Cyc (T_1\oplus T_2)$.
       It is clear that $P_jR\in \Cyc T_j$, where
       $P_j$ is the  projection from  $X_1\oplus X_2$ onto $X_j$, $j\in\{1,2\}$.
       Following \cite{Nikolskii and Vasjunin}, we write
       $$
          \Cyc (T_1\oplus T_2)=\Cyc  T_1\vee \Cyc T_2
       $$
       if $P_jR\in \Cyc  T_j$ $(j=1,2)$ yields $R\in \Cyc (T_1\oplus T_2)$
       for every $R\subset X_1\oplus X_2$.
       In particular, if  $\Lat (T_1\oplus T_2)=\Lat  T_1\oplus \Lat  T_2$
       then $\Cyc (T_1\oplus T_2)=\Cyc  T_1\vee \Cyc T_2$.

Next we complete  \cite[Proposition 4.1, Proposition 4.2]{M.M.Malamud 2}.
\begin{theorem}\label{split}
      Suppose
      $
      A=\bigoplus_{j=1}^r \lambda_jJ^\alpha
      $
      is defined on
      $
      \bigoplus_{j=1}^rX
      $
      and
\begin{equation}
 \arg\lambda_{i}\ne \arg\lambda_{j}\pmod{2\pi},\qquad
 1\leqslant i<j\leqslant r.\label{algnealg}
\end{equation}
      Then
            \begin{align}
            &\Alg A=\{A\}'= \{A\}''=\bigoplus_{j=1}^r\Alg J^\alpha=\bigoplus_{j=1}^r\{J^\alpha\}'=\bigoplus_{j=1}^r\{J^\alpha\}'',
              \\
             &\Lat  A=\Hyplat  A
             =\bigoplus_{j=1}^r \Lat J^\alpha=\bigoplus_{j=1}^r \Hyplat J^\alpha,
             \label{eqref4.1bb}
             \\
             &\Cyc  A
             =\bigvee_{j=1}^r \Cyc J^\alpha ,\label{neweq3}\\
             &\disc A=\mu_A=1.
             \label{eqref4.1dd}
      \end{align}
\end{theorem}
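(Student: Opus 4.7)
The direct-sum descriptions of $\{A\}'$, $\{A\}''$, $\Lat A$ and $\Hyplat A$ in the theorem are already recorded in \cite[Propositions 4.1, 4.2]{M.M.Malamud 2}, and $\Alg J^\alpha = \{J^\alpha\}' = \{J^\alpha\}''$ follows from the convolution representation \eqref{convoper}. What remains to prove is therefore (a) $\Alg A = \bigoplus_{j=1}^r \Alg J^\alpha$, (b) $\Cyc A = \bigvee_{j=1}^r \Cyc J^\alpha$, and (c) $\disc A = \mu_A = 1$.

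For (a) the plan is to invoke Theorem \ref{corRadjRos} with $T_j := \lambda_j J^\alpha$; this reduces the problem to the splitting $\Lat A^{(m)} = \bigoplus_j \Lat(\lambda_j J^\alpha)^{(m)}$ for every positive integer $m$. The commutant of $A^{(m)}$ splits as $\bigoplus_j \{(\lambda_j J^\alpha)^{(m)}\}'$ by the same intertwining argument used at multiplicity one: under \eqref{algnealg}, each off-diagonal entry $X$ between the $(\lambda_{j'} J^\alpha)^{(m)}$ and $(\lambda_j J^\alpha)^{(m)}$ summands satisfies $J^\alpha X = (\lambda_{j'}/\lambda_j)XJ^\alpha$ with $\lambda_{j'}/\lambda_j \notin (0,+\infty)$, so $X = 0$ by the discussion of \eqref{1.8A}--\eqref{equ1.6}. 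Lifting the quasisimilarity construction of Theorem \ref{th5.3Halmos} from $A$ to $A^{(m)}$ shows that $A^{(m)}$ is quasisimilar to an accretive $C_0$-contraction, so Remark \ref{remarkHalmos}(ii) writes every element of $\Lat A^{(m)}$ as $\ker C$ with $C \in \{A^{(m)}\}'$. Because the commutant is block-diagonal, these kernels split as $\bigoplus_j \ker C_j$, giving the required splitting; Theorem \ref{corRadjRos} then yields $\Alg A = \bigoplus_j \Alg(\lambda_j J^\alpha) = \bigoplus_j \Alg J^\alpha$. This is the main obstacle, because Theorem \ref{corRadjRos} demands splitting of $\Lat A^{(m)}$ for every $m$, not only $m = 1$, forcing the commutant and quasisimilarity arguments of Section \ref{Preliminaries} to be re-run at higher multiplicity.

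Assertion (b) is immediate from $\Lat A = \bigoplus_j \Lat J^\alpha$ together with the remark preceding the theorem. For (c), the vector $\one \oplus \cdots \oplus \one$ has every component satisfying the $\varepsilon$-condition \eqref{equ5.1} and is therefore cyclic for each $\lambda_j J^\alpha$; by (b) the line it spans is cyclic for $A$, giving $\mu_A = 1$. To upgrade this to $\disc A = 1$, I would run a Baire category argument inside an arbitrary closed $E \in \Cyc A$: the closed subspaces $F_{j,n} := \{f \in E : P_j f = 0 \text{ a.e. on } [0, 1/n]\}$ are all proper in $E$, because $F_{j,n} = E$ would force $P_j E$ into a proper $J^\alpha$-invariant subspace of $L_p[0,1]$, contradicting $P_j E \in \Cyc J^\alpha$ (a direct consequence of $E \in \Cyc A$ and the block-diagonal form of $A$). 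The Baire theorem then produces $f \in E \setminus \bigcup_{j,n} F_{j,n}$; each $P_j f$ satisfies the $\varepsilon$-condition and is hence cyclic for $J^\alpha$, so by (b) the line $\myspan\{f\} \subset E$ is a one-dimensional cyclic subspace of $A$.
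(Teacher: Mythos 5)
Your reduction of the $\Alg$-splitting to Theorem \ref{corRadjRos} follows the same skeleton as the paper's second proof, and your splitting of the commutant of $A^{(m)}$ is correct (the off-diagonal entries vanish by Corollary \ref{mainfor01}(i)). The gap is the next step: the claim that ``lifting the quasisimilarity construction of Theorem \ref{th5.3Halmos}'' makes $A^{(m)}=\bigoplus_{j=1}^r(\lambda_jJ^\alpha)^{(m)}$ quasisimilar to an accretive $C_0$-contraction. That construction is available only under the equal-argument condition \eqref{arg=arg}: it rescales all summands onto a single ray, trades $J^\alpha$ for $J$ inside a common $\Alg$, and lands on $\bigoplus_i s_iJ$ with $s_i>0$, which is accretive with finite-dimensional real part. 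Under \eqref{algnealg} the blocks lie on $r$ distinct rays, so after the same reduction you are left with $\bigoplus_j\lambda_j'J$ where the $\lambda_j'$ still have distinct arguments; the real part of $\lambda J$ contains an infinite-rank indefinite term coming from the imaginary part of $J$, so this operator is not accretive, and no quasisimilarity to a $C_0$-contraction is established anywhere in the paper. Without the representation $E=\ker C$, $C\in\{A^{(m)}\}'$, the block-diagonality of the commutant gives you nothing about $\Lat A^{(m)}$ --- the paper itself emphasizes (Remark \ref{rem7.3} and the counterexample in the introduction) that splitting of the commutant and of $\Hyplat$ does not imply splitting of $\Lat$. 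Worse, given the per-block Halmos property and the block-diagonal commutant, the statement ``every invariant subspace of $A^{(m)}$ is a kernel of a commutant element'' is equivalent to the lattice splitting you are trying to prove, so the argument is circular at its core. The paper closes this hole differently: its second proof simply quotes the splitting of $\Lat\bigl(\bigoplus_{j=1}^r(\lambda_jJ^\alpha)^{(n)}\bigr)$ for every multiplicity from Theorem \ref{th4.1Lp}, i.e.\ from \cite{M.M.Malamud 2}, before invoking Theorem \ref{corRadjRos}; its first proof avoids higher multiplicity altogether and exhibits $\mathbb O\oplus\dots\oplus\mathbb O\oplus\lambda_rJ^\alpha$ as a strong limit of $AP_n(A)$ by applying polynomials to the cyclic vector $\frac{x^{\alpha-1}}{\Gamma(\alpha)}\oplus\dots\oplus\frac{x^{\alpha-1}}{\Gamma(\alpha)}$ and using commutativity of convolution. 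Either route would repair your part (a).

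Parts (b) and (c) are fine. Part (b) is exactly the remark preceding the theorem. Your Baire-category argument for $\disc A=1$ is a correct and self-contained derivation of a fact the paper only cites from \cite{M.M.Malamud 2}; just take $E$ closed so that the Baire theorem applies to it as a complete space.
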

\begin{proof}
\eqref{eqref4.1bb}-\eqref{eqref4.1dd} and the splitting of
$\{A\}'$ and $\{A\}''$ were proved in \cite{M.M.Malamud 1}, \cite{M.M.Malamud 2}. We
present two different proofs of the splitting of $\Alg A$ due to the first
and to the second  author, respectively.

\emph{First  proof.}
 We will derive the splitting of $\Alg A$ from the splitting of $\Cyc
A$.

By \eqref{neweq3} $g:= \frac{x^{\alpha-1}}{\Gamma(\alpha)}\oplus\dots\oplus
\frac{x^{\alpha-1}}{\Gamma(\alpha)}\in \Cyc A$. Hence,
there exists a sequence $\{P_n(x)\}_{n=1}^\infty$ such that
 $\slim\limits_{n\rightarrow\infty} P_n(A)g
      =0\oplus\dots\oplus 0\oplus\frac{x^{\alpha-1}}{\Gamma(\alpha)}$.
       We claim that
\begin{equation}
        \slim\limits_{n\rightarrow\infty}AP_n(A)=
       \mathbb O\oplus\dots\oplus\mathbb O\oplus
       \lambda_rJ^\alpha.\label{splitpol}
\end{equation}
 Indeed, for any $f=f_1\oplus\dots\oplus f_r\in\bigoplus_{j=1}^rX$ one has
\begin{align*}
&\slim\limits_{n\rightarrow\infty}
                       AP_n(A)f=
      \slim\limits_{n\rightarrow\infty}
\left( \lambda_1J^\alpha P_n(\lambda_1J^\alpha)f_1\oplus\dots\oplus
\lambda_rJ^\alpha P_n(\lambda_rJ^\alpha)f_r\right) \\
     &=\slim\limits_{n\rightarrow\infty}
\left(\lambda_1\frac{x^{\alpha-1}}{\Gamma(\alpha)}*(P_n(\lambda_1J^\alpha)f_1)(x)\oplus\dots\oplus
\lambda_r\frac{x^{\alpha-1}}{\Gamma(\alpha)}*(P_n(\lambda_rJ^\alpha)f_r)(x)\right)
\\
&=\slim\limits_{n\rightarrow\infty}\left(
  \lambda_1f_1*(P_n(\lambda_1J^\alpha)\frac{x^{\alpha-1}}{\Gamma(\alpha)})\oplus\dots\oplus
,\lambda_rf_r*(P_n(\lambda_rJ^\alpha)\frac{x^{\alpha-1}}{\Gamma(\alpha)}\right)
\\
&=\lambda_1f_1*0\oplus\lambda_2f_2*0\oplus\dots\oplus\lambda_rf_r*\frac{x^{\alpha-1}}{\Gamma(\alpha)}
=\diag(\mathbb O,\dots,\mathbb O,\lambda_rJ^\alpha)f.
\end{align*}
So \eqref{splitpol} is proved. A similar argument shows that for
any $j\in\{1,\dots,r\}$ there exists a sequence of polynomials
$\{P_{j,n}\}_{n=1}^\infty$ such that
\begin{equation*}
        \slim\limits_{n\rightarrow\infty}AP_{j,n}(A)=
      \mathbb O\oplus\dots\oplus
        \mathbb O\oplus\lambda_jJ^\alpha\oplus\mathbb O\oplus\dots\oplus\mathbb
        O.
\end{equation*}
Hence the splitting of $\Alg A$ is proved.

\emph{Second proof.}  Keeping in mind notations of Theorem
\ref{th4.1Lp} (see below), for any $j\in\{1,\dots,r\}$ we let
$n_j=n$ and  $\lambda_{j1}:=\dots:=\lambda_{jn}:=\lambda_j$. Then
setting  $A(j) := \bigoplus_{i=1}^{n_j}\lambda_{ji} J^\alpha$ we
rewrite $A(j)$ and $A$ as
$$
A(j) = \bigoplus_{i=1}^{n} \lambda_j J^\alpha = (\lambda_j
J^\alpha)^{(n)} \quad  \text{and} \quad
A=\bigoplus_{j=1}^rA(j)=\bigoplus_{j=1}^r(\lambda_j J^\alpha)^{(n)},
$$
where the factors $\lambda_j$ have different arguments,
$\lambda_j\not = \lambda_k$ for $j\not = k.$ Therefore by Theorem
\ref{th4.1Lp}  the lattice $\Lat (\bigoplus_{j=1}^r(\lambda_j
J^\alpha)^{(n)})$ splits,  $\Lat (\bigoplus_{j=1}^r(\lambda_j
J^\alpha)^{(n)})=\bigoplus_{j=1}^r\Lat (\lambda_j J^\alpha)^{(n)}$.
One completes the proof by applying  Theorem \ref{corRadjRos} with
$T_j=\lambda_j J^\alpha,\  j\in\{1,\dots,n\}$.
\end{proof}
\begin{remark}\label{OsilenkerShulman}
Some particular statements of Theorem \ref{split} were obtained in
 \cite{A.Atzmon, L.T.Hill, B.P. Osilenker and V.S.Shulman, B.P. Osilenker and
 V.S.Shulman2} for the case $p=2$.

 Namely,  A. Atzmon \cite{A.Atzmon} proved that for every integer $k\geq
 2$, the operator $iJ^{1-1/k}\oplus e^{\frac{\pi i}{2k}}J^{1-1/k}$ is
 cyclic.

In \cite{B.P. Osilenker and V.S.Shulman, B.P. Osilenker and
V.S.Shulman2} B.P. Osilenker and V.S. Shulman proved  that
\eqref{algnealg} implies the splitting of  $\Lat
(\bigoplus_{j=1}^r\lambda_j J).$ Their proof cannot be extended to
the case $\alpha \not =1.$

L.T. Hill \cite{L.T.Hill}  showed that if $\alpha\in (0,1)$ and
$\lambda$ is a nonzero complex number, then $\Lat (J^\alpha\oplus
\lambda J^\alpha)$ splits if and only if $\lambda$ is not
positive. His proof cannot be extended  neither to the case
 of $\alpha >1$ nor to the number of summands $n>2.$

\end{remark}
The following result is easily implied by combining Theorems
\ref{pr6.3} and \ref{split}.
\begin{corollary}\label{mainfor01}\cite{M.M.Malamud 4},\cite{M.M.Malamud 2}
     Let $c\in\mathbb C$ and let
     $R\in [X]$ be a solution
     of the equation
     $
      R J^\alpha=c J^\alpha R
     $.
     Then the following statements hold
\begin{itemize}
     \item[(i)] if $c\not\in\mathbb R_+$, then $R=\mathbb O$;
     \item[(ii)] if $c=a^\alpha>0,\ a>0$, then
     $
     R\in L_a\{J^\alpha\}^{\prime}$, where $L_a$ is defined by \eqref{neweq2}.
\end{itemize}
\end{corollary}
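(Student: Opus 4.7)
The plan is to realize the intertwining equation $RJ^\alpha = cJ^\alpha R$ as a commutation relation for a $2\times 2$ off-diagonal operator matrix with respect to a direct sum $A = \lambda_1 J^\alpha \oplus \lambda_2 J^\alpha$ on $X\oplus X$, and then to read off the conclusion from Theorems \ref{split} and \ref{pr6.3}. A direct block computation shows that $\bigl(\begin{smallmatrix} 0 & 0 \\ R & 0 \end{smallmatrix}\bigr) \in \{A\}'$ iff $RJ^\alpha = (\lambda_2/\lambda_1)J^\alpha R$, and $\bigl(\begin{smallmatrix} 0 & R \\ 0 & 0 \end{smallmatrix}\bigr) \in \{A\}'$ iff $RJ^\alpha = (\lambda_1/\lambda_2)J^\alpha R$. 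Thus by choosing $\lambda_1,\lambda_2$ with the appropriate ratio equal to $c$, the given equation identifies $R$ with an off-diagonal block of some element of $\{A\}'$.

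For part (i), when $c\ne 0$ and $c\notin \mathbb R_+$, I would take $\lambda_1 = c$, $\lambda_2 = 1$, so that $\arg\lambda_1 \ne \arg\lambda_2 \pmod{2\pi}$ and condition \eqref{algnealg} holds. Theorem \ref{split} then yields the splitting $\{A\}' = \{J^\alpha\}' \oplus \{J^\alpha\}'$, which forces every element of $\{A\}'$ to be block diagonal, so the off-diagonal entry $R$ vanishes. (The edge case $c=0$, not covered by the cases stated, is immediate: $RJ^\alpha = 0$ together with the density of $\ran J^\alpha$ in $X$ gives $R = \mathbb O$.)

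For part (ii), write $c = a^\alpha$ with $a>0$. I would split into the subcases $0 < a \leqslant 1$ and $a > 1$ in order to meet the ordering condition $1 = s_1 \leqslant s_2$ required by Theorem \ref{pr6.3}. In the first subcase, set $\lambda_1 = 1$, $\lambda_2 = a^\alpha$, so that $s_2 = 1/a \geqslant 1$, and place $R$ in position $(2,1)$; Theorem \ref{pr6.3} then gives $R \in L_{a_{21}}\{J^\alpha\}'$ with $a_{21} = s_2^{-1}s_1 = a$. In the second subcase, set $\lambda_1 = a^\alpha$, $\lambda_2 = 1$, so that $s_2 = a \geqslant 1$, and place $R$ in position $(1,2)$; Theorem \ref{pr6.3} gives $R \in L_{a_{12}}\{J^\alpha\}'$ with $a_{12} = s_1^{-1}s_2 = a$. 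Either way, $R \in L_a\{J^\alpha\}'$, as required.

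There is no substantial obstacle; the result truly is a direct corollary, and the only bookkeeping concerns choosing the placement of $R$ (position $(1,2)$ versus $(2,1)$) and the order of $\lambda_1,\lambda_2$ so that the ordering hypothesis \eqref{arg=arg} of Theorem \ref{pr6.3} is satisfied in both subcases.
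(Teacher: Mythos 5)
Your proposal is correct and is exactly the paper's route: the paper introduces the corollary with the remark that it ``is easily implied by combining Theorems \ref{pr6.3} and \ref{split}'', and your argument is precisely that combination, with the off-diagonal block realization $K_{ij}J^\alpha=(\lambda_i/\lambda_j)J^\alpha K_{ij}$ and the case bookkeeping for \eqref{arg=arg} spelled out. Nothing is missing.
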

\begin{remark}\label{remee}
\begin{itemize}
\item[(i)] It was shown in \cite{Ciprian Foias J. P. Williams} that the operators
 $J$ and $cJ$ are similar if and only if $c=1$.   Corollary \ref{mainfor01}  implies that  operators
 $J^\alpha$ and $cJ^\alpha$ are not even quasisimilar for any $c\ne 1$.
\item[(ii)] In particular cases Corollary \ref{mainfor01} $(i)$ was recently reproved by another method in
\cite{A.Biswas and A.Lambert and S.Petrovic}, \cite{M.T.Karaev} (the case $\alpha =1$, $p=2$) and in
\cite{A.Biswas and S.Petrovic} (the case $\alpha\in\mathbb Z_+\setminus\{0\}$, $p=2$).
Some solutions $R$ of the equation $R J^\alpha=c J^\alpha R$ in the case $c>0$, $\alpha\in \mathbb Z_+$ were also indicated in
\cite{A.Biswas and A.Lambert and S.Petrovic}, \cite{A.Biswas and S.Petrovic}, \cite{M.T.Karaev}.
\end{itemize}
\end{remark}
We need the following lemma in the sequel.
\begin{lemma}\label{prodisc}
Suppose that $A\in [X_1]$ is quasisimilar to $B\in [X_2]$ with
intertwining deformations  $L$  and $K$. That is,  $AL=LB$ and
$KA=BK$. Let also $LK=A^2$ and $KL=B^2$. Then
\begin{itemize}

\item[(i)] $E\in \Cyc A\Leftrightarrow \overline{KE}\in \Cyc B $;

\item[(ii)] $F\in \Cyc B\Leftrightarrow \overline{LF}\in \Cyc A $;

\item[(iii)] $\disc A=\disc B$.
\end{itemize}
\begin{proof}
The proof is left for the reader.
\end{proof}
\end{lemma}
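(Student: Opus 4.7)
The plan is to use the two structural ingredients of the hypothesis in sequence: first, that $K$ and $L$ intertwine $A$ and $B$ (so $KA^n=B^nK$ and $A^nL=LB^n$ for every $n\in\mathbb Z_+$) with dense range and trivial kernel; second, that $LK=A^2$ and $KL=B^2$, which let us push density through the other direction. The assertions (i) and (ii) are then mirror images of each other, and (iii) follows from (i) and (ii) by exploiting that $K$ and $L$ are one-to-one, hence preserve dimension on finite-dimensional subspaces.

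For the forward direction of (i), given $E\in\Cyc A$, I would compute
\begin{equation*}
\myspan\{B^n\overline{KE}:n\geqslant 0\}\supset\myspan\{B^nKE:n\geqslant 0\}=K\,\myspan\{A^nE:n\geqslant 0\},
\end{equation*}
and then use the elementary fact that if $D\subset X_1$ is dense and $T:X_1\to X_2$ is continuous, then $TD$ is dense in $\overline{\ran T}$; applied to $T=K$ this yields $\overline{K\myspan\{A^nE\}}\supset\overline{\ran K}=X_2$, so $\overline{KE}\in\Cyc B$. For the converse, suppose $\overline{KE}\in\Cyc B$, so that $K\myspan\{A^nE:n\geqslant 0\}$ is dense in $X_2$. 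Applying $L$ (continuous, with dense range in $X_1$) and using $LK=A^2$ gives
\begin{equation*}
L\bigl(K\myspan\{A^nE:n\geqslant 0\}\bigr)=\myspan\{A^{n+2}E:n\geqslant 0\}\subset\myspan\{A^nE:n\geqslant 0\},
\end{equation*}
and the dense-image lemma again shows the left-hand side is dense in $X_1$. Hence $\myspan\{A^nE:n\geqslant 0\}$ is dense in $X_1$, i.e.\ $E\in\Cyc A$. Statement (ii) is obtained by swapping the roles of $A\leftrightarrow B$, $K\leftrightarrow L$, and using $KL=B^2$ in place of $LK=A^2$.

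For (iii), the key observation is that for any finite-dimensional $E\subset X_1$ the restriction $K|_E$ is injective (since $\ker K=\{0\}$), so $KE$ is a closed finite-dimensional subspace with $\dim KE=\dim E$, and $\overline{KE}=KE$. Writing $\nu_A(E):=\min\{\dim E':E'\subset E,\ E'\in\Cyc A\}$ for $E\in\Cyc A$ (and similarly $\nu_B$), the map $E'\mapsto KE'$ is a bijection between cyclic subspaces of $A$ contained in $E$ and cyclic subspaces of $B$ contained in $KE$ (its inverse is $F'\mapsto(K|_E)^{-1}F'$), and it preserves dimension. This gives $\nu_A(E)=\nu_B(KE)$ for every finite-dimensional $E\in\Cyc A$. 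Taking the supremum and using (ii) symmetrically yields $\disc A\leqslant\disc B$ and $\disc B\leqslant\disc A$.

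The argument is mostly bookkeeping; the only point that needs a little care is the dense-image lemma (if $D\subset X_1$ is dense and $T\in[X_1,X_2]$, then $TD$ is dense in $\overline{\ran T}$), which I would record as a short preliminary remark, since it is used three times. I do not anticipate a genuine obstacle.
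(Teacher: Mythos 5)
The paper itself gives no argument here (the proof is explicitly ``left for the reader''), so there is nothing to compare your route against; I can only judge it on its own merits. Parts (i) and (ii) are correct and complete: the dense-image lemma together with the intertwining relations $KA^n=B^nK$ handles the forward implication, and the relations $LK=A^2$, $KL=B^2$ combined with the $A$-invariance of $\myspan\{A^nE:n\geqslant 0\}$ handle the converse exactly as you describe. (Minor point: in your first display the middle ``$=$'' should be an inclusion or carry a closure, since $K$ applied to a closed span need not be closed; your verbal argument already accounts for this.)

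Part (iii) has a gap at the final step. Your bijection $E'\mapsto KE'$ does give $\nu_A(E)=\nu_B(KE)$ for \emph{finite-dimensional} $E\in\Cyc A$, but $\disc A$ is a supremum of $\nu_A(E)$ over \emph{all} cyclic subspaces $E$, so ``taking the supremum'' only yields $\sup\{\nu_A(E):E\in\Cyc A,\ \dim E<\infty\}\leqslant\disc B$. You need the additional observation that this restricted supremum already equals $\disc A$: if $E\in\Cyc A$ and $E'\subset E$ is a cyclic subspace of minimal dimension $d=\nu_A(E)<\infty$, then $E'$ is itself a finite-dimensional cyclic subspace with $\nu_A(E')=d$ (any cyclic $E''\subset E'$ lies in $E$, hence $\dim E''\geqslant d$). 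That one sentence closes the argument whenever every cyclic subspace contains a finite-dimensional cyclic subspace. In the remaining degenerate case, where some $E\in\Cyc A$ contains no finite-dimensional cyclic subspace and hence $\disc A=\infty$, your method does not show $\disc B=\infty$: a finite-dimensional cyclic $F'\subset\overline{KE}$ need not lie in $KE$, so it cannot be pulled back through $K|_E$, and pushing it forward with $L$ lands in $\overline{A^2E}$ rather than in $E$. This case is irrelevant for the operators to which the lemma is applied in the paper (there $\disc=\mu<\infty$), but for the lemma as stated you should either treat it or note the standing finiteness assumption.
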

Now we can consider the case of any diagonal nonsingular
matrix $B$.

Next we complete \cite[Proposition 3.2, Theorem 3.4, Corollary 3.5, Theorem 4.10, Theorem 4.11]{M.M.Malamud 2}.
\begin{theorem}\label{th4.1Lp}
     Suppose
     $
     A(j):= \bigoplus_{i=1}^{n_j}\lambda_{ji} J^\alpha
     $
     is defined on $
     \bigoplus_{i=1}^{n_j}X
     $,
      $
     \ j\in\{1,\dots,r\}
     $
     and
     $
     A:=\bigoplus_{j=1}^r A(j)
     $
     is defined on
     $
     \bigoplus_{j=1}^r(\bigoplus_{i=1}^{n_j}X)
     $.
Let also
     \begin{align*}
 \arg\lambda_{j1}&=\arg\lambda_{ji}\pmod{2\pi}, &
     &1\leqslant j \leqslant r,\quad 1\leqslant i \leqslant n_j,\\
  \arg\lambda_{i1}&\ne \arg\lambda_{j1}\pmod{2\pi}, &
&1\leqslant i<j\leqslant r.
     \end{align*}
    Then
      \begin{align}
             \Alg  A
             &=\bigoplus_{j=1}^r \Alg  A(j),
            \label{eqref4.1a}\\
      \{A\}'&= \bigoplus_{j=1}^r\{A(j)\}',
      \label{eqref4.1e}\\
            \{A\}''&= \bigoplus_{j=1}^r\{A(j)\}'',
            \label{eqref4.1f}\\
      \Lat  A
      &=\bigoplus_{j=1}^r \Lat  A(j),
      \label{eqref4.1b}\\
             \Hyplat  A
             &=\bigoplus_{j=1}^r \Hyplat  A(j),
             \label{eqref4.1c}\\
       \Cyc  A
        &=\bigvee_{j=1}^r \Cyc A(j),
       \label{eqref4.1d}\\
              \disc A &= \mu_A =\max\limits_{1\leqslant j\leqslant
              r}\mu_{A(j)}.\nonumber
      \end{align}
\end{theorem}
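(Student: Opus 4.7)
The plan is to reduce each assertion to the single-argument-class results of this subsection using Corollary \ref{mainfor01}(i) to decouple distinct blocks $A(i)$ and $A(j)$, and Theorem \ref{corRadjRos} to transfer lattice information to algebra information. For the commutant splitting \eqref{eqref4.1e}, write $X\in\{A\}'$ as a block matrix $(X_{ij})_{i,j=1}^r$ with $X_{ij}:\bigoplus_{l=1}^{n_j}L_p[0,1]\to\bigoplus_{l=1}^{n_i}L_p[0,1]$, further decomposed into scalar entries $(X_{ij})_{kl}\in [L_p[0,1]]$. The intertwining $XA=AX$ reads $(X_{ij})_{kl}J^\alpha=(\lambda_{ik}/\lambda_{jl})J^\alpha(X_{ij})_{kl}$; for $i\neq j$ the quotient $\lambda_{ik}/\lambda_{jl}$ has nonzero argument, so Corollary \ref{mainfor01}(i) forces $(X_{ij})_{kl}=0$, whence $X$ is block-diagonal with $X_{jj}\in\{A(j)\}'$. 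For the bicommutant \eqref{eqref4.1f}, the block projections $P_j$ and the extensions-by-zero of elements of each $\{A(j)\}'$ all lie in $\{A\}'$, so every $Y\in\{A\}''$ commutes with both, hence is block-diagonal with diagonal pieces in $\{A(j)\}''$.

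For the lattice splitting \eqref{eqref4.1b}, I would establish that $A$ is quasisimilar to a $C_0$-contraction, so that Remark \ref{remarkHalmos}(ii) applies. Each $A(j)=e^{i\theta_j}\tilde A(j)$ with $\theta_j:=\arg\lambda_{j1}$ and $\tilde A(j)$ a positive-multiple direct sum of $J^\alpha$; by the quasisimilarity construction in the proof of Theorem \ref{th5.3Halmos}, $\tilde A(j)$ is quasisimilar to an accretive quasinilpotent operator with finite-dimensional real part on a Hilbert space, hence by Remark \ref{remarkHalmos}(i) to a $C_0$-contraction. The $C_0$-class being preserved under multiplication by the unimodular scalar $e^{i\theta_j}$, $A(j)$ is thus quasisimilar to a $C_0$-contraction $T(j)$, and the finite direct sum $\bigoplus_j T(j)$ is again a $C_0$-contraction (take the product of minimal annihilating $H^\infty$-functions). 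Halmos's theorem then gives $\Lat A=\{\ker C:C\in\{A\}'\}$, and combining with \eqref{eqref4.1e}, every $C\in\{A\}'$ decomposes as $\bigoplus_j C_j$ with $C_j\in\{A(j)\}'$, yielding $\ker C=\bigoplus_j\ker C_j\in\bigoplus_j\Lat A(j)$; the reverse inclusion is immediate.

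The remaining assertions follow cleanly. The hyperinvariant splitting \eqref{eqref4.1c} is immediate from \eqref{eqref4.1b} and \eqref{eqref4.1e}: any $M=\bigoplus_j M_j\in\Hyplat A$ must be invariant under $\{A\}'=\bigoplus_j\{A(j)\}'$, forcing $M_j\in\Hyplat A(j)$. The algebra identity \eqref{eqref4.1a} follows from Theorem \ref{corRadjRos}: since $A(j)^{(n)}$ has the same equal-argument structure as $A(j)$, the operator $\bigoplus_j A(j)^{(n)}$ again satisfies the hypotheses of the present theorem, so \eqref{eqref4.1b} applied to it gives $\Lat\bigoplus_j A(j)^{(n)}=\bigoplus_j\Lat A(j)^{(n)}$ for every $n$. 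The cyclic splitting \eqref{eqref4.1d} then follows from \eqref{eqref4.1b} via the remark preceding Theorem \ref{split}. For the multiplicity equality, the projections $P_j$ send cyclic subspaces of $A$ onto cyclic subspaces of $A(j)$, giving $\mu_A\geq\max_j\mu_{A(j)}$; conversely, choosing $\tilde E_j\in\Cyc A(j)$ of dimension $N:=\max_j\mu_{A(j)}$ with basis $\{e_j^{(s)}\}_{s=1}^N$, the subspace $E:=\myspan\{\bigoplus_j e_j^{(s)}:1\leq s\leq N\}$ satisfies $P_j E=\tilde E_j$ for each $j$, so the $\vee$-splitting of $\Cyc A$ yields $E\in\Cyc A$ and $\mu_A\leq N$. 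The same diagonal-combination argument, together with Theorem \ref{th5.3}(iii) applied blockwise, gives $\disc A=\mu_A$.

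The main obstacle is the lattice splitting \eqref{eqref4.1b}: the polynomial-strong-limit technique used in the first proof of Theorem \ref{split} relies on the existence of a cyclic vector for $A$, which is unavailable once some $n_j\geq 2$. The quasisimilarity reduction to a $C_0$-contraction, combined with the already-proven commutant splitting, circumvents this obstruction by using Halmos's theorem to convert the algebraic block structure of $\{A\}'$ into the desired geometric splitting of $\Lat A$.
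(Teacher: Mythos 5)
Your block--matrix derivation of the commutant splitting \eqref{eqref4.1e} from Corollary \ref{mainfor01}(i), and of the bicommutant splitting \eqref{eqref4.1f} from it, is sound. The paper itself, however, simply cites \cite{M.M.Malamud 2} for \eqref{eqref4.1e}--\eqref{eqref4.1d} and for $\mu_A=\max_j\mu_{A(j)}$, and only proves \eqref{eqref4.1a} (via Theorem \ref{split} together with the description in Theorem \ref{th5.3alg}) and the equality $\disc A=\mu_A$ (by quasisimilarity to the $L_2$ setting, the Nikol'skii--Vasjunin results, and Lemma \ref{prodisc}). So the real question is whether your self-contained argument for the key relation \eqref{eqref4.1b} closes, since \eqref{eqref4.1a}, \eqref{eqref4.1c}, \eqref{eqref4.1d} and the multiplicity statements are all derived from it in your scheme.

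It does not close as written. The single-block reduction in Theorem \ref{th5.3Halmos} proceeds in two steps that are each \emph{blockwise} and do not survive the direct sum over distinct argument classes: (a) the passage from $\alpha$ to $1$ uses the equality $\Alg A(j)=\Alg(\bigoplus_i\lambda_{j1}s_{ji}^{-1}J)$, and equality of algebras block by block does not yield equality of the algebras (hence of the lattices) of the direct sums --- indeed that would require \eqref{eqref4.1a}, which you prove \emph{after} \eqref{eqref4.1b}, so the argument is circular there; (b) the passage from the accretive operator $B_j$ to a $C_0$-contraction is via the Cayley transform, which is a functional-calculus relation (same $\Lat$ and commutant), \emph{not} a quasisimilarity, so the assertion ``$A(j)$ is quasisimilar to a $C_0$-contraction $T(j)$'' is unsubstantiated; and even granting it blockwise, $\bigoplus_j e^{i\theta_j}B_j$ is not accretive and is not obtained from $\bigoplus_j T(j)$ by a single function of the direct sum, because the relating function depends on $j$. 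Consequently the hypothesis of Remark \ref{remarkHalmos}(ii) --- that the full operator $A$ is quasisimilar to a $C_0$-contraction --- is exactly the point that needs proof, and it is where the whole difficulty of \eqref{eqref4.1b} (established in \cite{M.M.Malamud 2} by direct analytic means) is concentrated. A secondary gap: for $\disc A=\mu_A$ you need, inside an \emph{arbitrary} $E\in\Cyc A$, a single subspace $E'$ of dimension $\max_j n_j$ with $P_jE'\in\Cyc A(j)$ for all $j$ simultaneously; the selection in Theorem \ref{th5.3}(iii) picks a sub-collection of the generators that depends on $j$, so ``the same argument blockwise'' requires an additional genericity step (or the $L_2$/Nikol'skii--Vasjunin route the paper actually takes).
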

\begin{proof}
Relations \eqref{eqref4.1e}-\eqref{eqref4.1d} and the equality
$\mu_A =\max\limits_{1\leqslant j\leqslant r}\mu_{A(j)}$ were
proved in \cite{M.M.Malamud 2}.
 Let us prove \eqref{eqref4.1a}. By Theorem \ref{split},
 for any $j\in \{1,\dots,r\}$
$$
\mathbb O\oplus\dots\oplus \mathbb O\oplus
\lambda_{j1}J^\alpha\oplus \mathbb O\oplus\dots\oplus \mathbb O\in
\Alg(\lambda_{11}J^\alpha\oplus\dots\oplus\lambda_{r1}J^\alpha).
$$
Thus,  by Theorem \ref{th5.3alg} we have that
$$
\mathbb O\oplus\dots\oplus \mathbb O\oplus A(j)\oplus \mathbb
O\oplus\dots\oplus \mathbb O\in \Alg A,
$$
and hence \eqref{eqref4.1a} is proved.

Let us prove that $\disc A = \mu_A$. Assume that $A_1:=A$ is
defined on\\ $\bigoplus_{j=1}^r(\bigoplus_{i=1}^{n_j}L_2[0,1])$. Then
\cite[Statement 1.13]{Nikolskii and Vasjunin2}  and \cite[Corollary
13]{Nikolskii and Vasjunin} imply the equality $\disc A_1 =
\mu_{A_1}$. We define
\begin{align*}
K&:=\bigoplus_{j=1}^r\bigoplus_{i=1}^{n_j}\lambda_{ji}J^\alpha\in
\biggl[\bigoplus_{j=1}^r\Bigl(\bigoplus_{i=1}^{n_j}L_2[0,1]\Bigr),\bigoplus_{j=1}^r\Bigl(\bigoplus_{i=1}^{n_j}L_p[0,1]\Bigr)\biggr],\\
L&:=\bigoplus_{j=1}^r\bigoplus_{i=1}^{n_j}\lambda_{ji}J^\alpha\in
\biggl[\bigoplus_{j=1}^r\Bigl(\bigoplus_{i=1}^{n_j}L_p[0,1]\Bigr),\bigoplus_{j=1}^r\Bigl(\bigoplus_{i=1}^{n_j}L_2[0,1]\Bigr)\biggr],
\end{align*}
and $A_2:=A$. It is clear that $K$ and $L$ are deformations and
$A_1L=LA_2$, $KA_1=A_2K$. Now  application of Lemma \ref{prodisc}
completes the proof.
\end{proof}
\section
{The operator $A_{k,0}$}
 \label{OperatorAk0}
Let  $J_{k,l}^\alpha$  stand for the operator   $J_k^\alpha$
acting on the subspace $E_l^k$ of  $W_p^k[0,1]$ defined by
\eqref{equ1.7} $(l\leqslant k-1)$ and $E_k^k := W_p^k[0,1]$.

Next we establish isometric equivalence of $J_{k,0}^\alpha$ and $J^\alpha$.
\begin{lemma}\label{scalL2.1}
      The operator $J_{k,l}^\alpha$ defined on $E_l^k$ is
      isometrically equivalent to the operator $J_l^\alpha$
      defined on $W_p^l[0,1]$.
        In particular, the operator $J_{k,0}^\alpha$ defined on
        $W_{p,0}^k[0,1]$ is isometrically equivalent to the operator
        $J_{0}^\alpha=:J^\alpha$ defined on $W_p^0[0,1]=L_p[0,1]$.
\end{lemma}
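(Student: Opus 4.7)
The plan is to exhibit an explicit isometric isomorphism $U: E_l^k \to W_p^l[0,1]$ that intertwines $J_{k,l}^\alpha$ with $J_l^\alpha$. The natural candidate is $(k-l)$-fold differentiation $U: f \mapsto f^{(k-l)}$, whose inverse is the iterated integration $U^{-1}: g \mapsto J^{k-l} g$. First I would verify that $U$ is a well-defined bijection between the two spaces: $f \in E_l^k$ automatically gives $f^{(k-l)} \in W_p^l[0,1]$, and for $g \in W_p^l[0,1]$ the function $J^{k-l}g$ lies in $W_p^k[0,1]$ and by construction vanishes together with its first $k-l-1$ derivatives at $0$, so it lies in $E_l^k$. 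The identity $U^{-1}U = \mathrm{id}$ on $E_l^k$ is the Taylor recovery formula $f = J^{k-l} f^{(k-l)}$, which holds precisely because $f^{(i)}(0)=0$ for $0 \leqslant i \leqslant k-l-1$; the identity $UU^{-1} = \mathrm{id}$ on $W_p^l[0,1]$ is a direct application of the fundamental theorem of calculus applied $k-l$ times.

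Next I would check that $U$ is an isometry. For $f \in E_l^k$ the definition of $\|\cdot\|_{W_p^k[0,1]}$ together with the vanishing boundary conditions yields
\begin{equation*}
\|f\|_{W_p^k[0,1]}^p = \sum_{i=k-l}^{k-1}|f^{(i)}(0)|^p + \int_0^1 |f^{(k)}(t)|^p\,dt.
\end{equation*}
Setting $g := Uf$, one has $g^{(j)}(0) = f^{(k-l+j)}(0)$ for $0 \leqslant j \leqslant l-1$ and $g^{(l)}(t) = f^{(k)}(t)$ a.e., so the right-hand side coincides with $\|g\|_{W_p^l[0,1]}^p$. Hence $U$ preserves norms.

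Finally I would verify the intertwining relation $U J_{k,l}^\alpha = J_l^\alpha U$, i.e.\ $(J^\alpha f)^{(k-l)} = J^\alpha (f^{(k-l)})$ for $f \in E_l^k$. Using the Taylor representation $f = J^{k-l} f^{(k-l)}$ and the semigroup property $J^\alpha J^{k-l} = J^{k-l+\alpha} = J^{k-l} J^\alpha$, one gets $J^\alpha f = J^{k-l}(J^\alpha f^{(k-l)})$; differentiating $k-l$ times strips the $J^{k-l}$ prefix and leaves $J^\alpha f^{(k-l)} = J_l^\alpha (Uf)$, as required. The special case $l = 0$, with $Uf = f^{(k)}$, yields at once the claimed isometric equivalence of $J_{k,0}^\alpha$ on $W_{p,0}^k[0,1]$ with $J^\alpha$ on $L_p[0,1]$.

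The main obstacle, really only a matter of bookkeeping, is ensuring that all the differentiations land in the correct Sobolev scale under the paper's running assumptions on $\alpha$ (either $\alpha \in \mathbb{Z}_+ \setminus \{0\}$ or $\alpha > k - 1/p$); once this is checked, the three steps above assemble into the full proof.
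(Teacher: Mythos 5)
Your proposal is correct and follows exactly the paper's own argument: the paper also uses $U=\frac{d^{k-l}}{dx^{k-l}}$ with inverse $J^{k-l}$ as the isometry and concludes from the identity $J_{k,l}^\alpha=U^{-1}J_l^\alpha U$. You simply spell out the norm computation and the intertwining verification that the paper leaves as ``clear.''
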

\begin{proof}
It is clear that the operator $ U=\frac{d^{k-l}}{dx^{k-l}}:\
E_l^k\rightarrow W_p^l[0,1] $ isometrically maps $E_l^k$ on
$W_p^l[0,1]$.
  Moreover,
  \begin{equation*}
      U^{-1}=U^*=
      J^{k-l} :\ W_p^l[0,1]\rightarrow E_l^k.
  \end{equation*}
The assertion follows now from the identity $
J_{k,l}^\alpha=U^{-1}J_l^\alpha U$.
\end{proof}
\begin{corollary}\label{corollary32}
The operator
$A_{k,0}:=\bigoplus_{i=1}^n\lambda_i J_{k_i,0}^\alpha$
defined on $\bigoplus_{i=1}^n W_{p,0}^{k_i}[0,1]$
is isometrically equivalent to the
operator
 $A:=\bigoplus_{i=1}^n\lambda_i J^\alpha$ defined on
$\bigoplus_{i=1}^n L_p[0,1]$.
\end{corollary}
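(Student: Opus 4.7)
The plan is to reduce Corollary \ref{corollary32} to Lemma \ref{scalL2.1} by taking direct sums of the isometric intertwiners component by component. Since $A_{k,0}$ and $A$ are each defined as orthogonal (external) direct sums, and since isometric equivalence respects direct sums in an obvious way, the corollary should follow immediately once the building blocks are identified.

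First I would apply Lemma \ref{scalL2.1} to each summand with $l=0$: for every $i\in\{1,\dots,n\}$, the operator $U_i := \frac{d^{k_i}}{dx^{k_i}}$ gives an isometry from $W_{p,0}^{k_i}[0,1]$ onto $L_p[0,1]$, with inverse $U_i^{-1} = J^{k_i}$, and satisfies
\begin{equation*}
   U_i\, J_{k_i,0}^\alpha = J^\alpha \, U_i.
\end{equation*}
Next I would assemble the direct sum $U := \bigoplus_{i=1}^n U_i$, which is then an isometric isomorphism
\begin{equation*}
   U :\ \bigoplus_{i=1}^n W_{p,0}^{k_i}[0,1] \ \longrightarrow\ \bigoplus_{i=1}^n L_p[0,1],
\end{equation*}
because each $U_i$ is an isometry and the norm on the direct sum is the $\ell_p$-combination of the component norms.

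Finally I would verify the intertwining. For $f = f_1\oplus\cdots\oplus f_n$,
\begin{equation*}
   U\, A_{k,0}\, f
   \;=\; \bigoplus_{i=1}^n U_i\bigl(\lambda_i J_{k_i,0}^\alpha f_i\bigr)
   \;=\; \bigoplus_{i=1}^n \lambda_i J^\alpha U_i f_i
   \;=\; A\, U f,
\end{equation*}
so $U A_{k,0} = A\, U$, which is exactly the asserted isometric equivalence.

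There is essentially no obstacle here: the corollary is a direct packaging of Lemma \ref{scalL2.1} into the direct-sum setting, and all that is needed is to observe that an orthogonal/external direct sum of isometries is again an isometry and that intertwining relations persist under direct sums. The only point worth flagging in the write-up is the admissibility of $\alpha$: one should note that $J_{k_i,0}^\alpha$ is well defined on $W_{p,0}^{k_i}[0,1]$ for arbitrary $\Re\alpha>0$ (as recorded in the standing assumptions), so no extra hypothesis on $\alpha$ is needed beyond the one already imposed on $A_{k,0}$.
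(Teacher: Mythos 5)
Your proposal is correct and is essentially the paper's own argument: the paper states Corollary \ref{corollary32} as an immediate consequence of Lemma \ref{scalL2.1} (applied with $l=0$ in each summand), and the componentwise isometries $U_i=\frac{d^{k_i}}{dx^{k_i}}$ with inverses $J^{k_i}$ together with the intertwining $U_iJ_{k_i,0}^\alpha=J^\alpha U_i$ are exactly the content of that lemma's proof. Assembling the direct sum $U=\bigoplus_{i=1}^n U_i$ and checking $UA_{k,0}=AU$ is the intended (and only needed) step.
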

Corollary \ref{corollary32} makes it possible to translate all results on the operator
 $A$ defined on $\bigoplus_{i=1}^n L_p[0,1]$ to the results on operator $A_{k,0}$ defined on
  $\bigoplus_{i=1}^n W_{p,0}^{k_i}[0,1]$. For instance, Theorem \ref{th5.3Halmos} takes the following form
\begin{theorem}\label{th5.3Halmosinsobolev}
      Let
      $
      A_{k,0}:=\bigoplus_{i=1}^n\lambda_i J_{k_i,0}^\alpha
      $
      be defined on
      $
      \bigoplus_{i=1}^n W_{p,0}^{k_i}[0,1]
      $
      and $\lambda_i$ satisfy condition \eqref{arg=arg}.
       Then every invariant subspace of $A_{k,0}$ is the closure of the
        range $($ the kernel$)$  of a bounded linear transformation that commutes with
        $A_{k,0}$.
\end{theorem}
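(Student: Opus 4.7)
The plan is to deduce Theorem \ref{th5.3Halmosinsobolev} directly from Theorem \ref{th5.3Halmos} via the isometric equivalence furnished by Corollary \ref{corollary32}. Since the hypothesis \eqref{arg=arg} on $\{\lambda_i\}$ is identical in both statements, and Theorem \ref{th5.3Halmos} has already established the desired description for the $L_p$-model operator $A=\bigoplus_{i=1}^n \lambda_i J^\alpha$, it suffices to transport the conclusion along the intertwining isometry.

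First, I would write down explicitly the unitary (isometric onto) operator $U:=\bigoplus_{i=1}^n \frac{d^{k_i}}{dx^{k_i}}$ from $\bigoplus_{i=1}^n W_{p,0}^{k_i}[0,1]$ onto $\bigoplus_{i=1}^n L_p[0,1]$ coming from Lemma \ref{scalL2.1}, whose inverse is $U^{-1}=\bigoplus_{i=1}^n J^{k_i}$. By the identity $J_{k_i,0}^\alpha = U_i^{-1} J^\alpha U_i$ established in Lemma \ref{scalL2.1}, one has the intertwining $U A_{k,0}=A U$, so that the map $C\mapsto U C U^{-1}$ is a bijective $\ast$-preserving isomorphism between $\{A_{k,0}\}'$ and $\{A\}'$.

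Next, I would use the fact that any isometric isomorphism carries closed subspaces to closed subspaces, preserves kernels, and preserves closures of ranges. Consequently, for every $M\subset \bigoplus_{i=1}^n W_{p,0}^{k_i}[0,1]$,
\begin{equation*}
M\in \Lat A_{k,0}\ \Longleftrightarrow\ UM\in \Lat A,
\end{equation*}
and for every $C\in [\bigoplus_{i=1}^n W_{p,0}^{k_i}[0,1]]$,
\begin{equation*}
U(\ker C)=\ker(UCU^{-1}),\qquad U(\overline{\ran C})=\overline{\ran(UCU^{-1})}.
\end{equation*}

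Finally, given any $M\in\Lat A_{k,0}$, Theorem \ref{th5.3Halmos} applied to $UM\in\Lat A$ produces $\widetilde C\in\{A\}'$ with $UM=\overline{\ran\widetilde C}$ (resp. $UM=\ker\widetilde C$); setting $C:=U^{-1}\widetilde C U\in\{A_{k,0}\}'$ and applying $U^{-1}$ to both sides yields $M=\overline{\ran C}$ (resp.\ $M=\ker C$), which is the desired conclusion. There is no genuine obstacle: once the isometric equivalence of Corollary \ref{corollary32} is in place, the argument is purely formal transport of structure, and the only point to verify carefully is that $U$ really intertwines the commutants and respects closures of ranges, both of which are immediate from $U$ being a bounded bijection with bounded inverse.
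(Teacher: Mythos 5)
Your proposal is correct and is exactly the paper's (implicit) argument: the paper derives this theorem from Theorem \ref{th5.3Halmos} by the isometric equivalence of Corollary \ref{corollary32} (built from Lemma \ref{scalL2.1}), which is precisely the transport-of-structure you carry out. The only cosmetic quibble is the phrase ``$\ast$-preserving isomorphism'' of commutants, which is out of place in the Banach-space setting $L_p[0,1]$; an algebra isomorphism is all that is needed and all that holds.
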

\section
{The operator $A_k$}\label{OperatorAk}
This section contains the main results of the paper. Namely, we
described  the spectral properties  of the operator
$A_k:=\bigoplus_{j=1}^n\lambda_j J_{k}^\alpha$ defined on $X^{(n)}=
\bigoplus_1^nX $ where  $X = W_p^{k}[0,1]$.
\subsection {The algebra $\Alg A_k$}\label{sec7}
\begin{theorem}\label{AlgProp2}
Suppose
      $
      A_k=\bigoplus_{i=1}^n\lambda_i J_k^\alpha
      $
      is defined on
      $
      \bigoplus_{i=1}^n W_p^k[0,1]
      $
      and
\begin{equation}
\lambda_i=
      \lambda_1/s_i^{\alpha},\ \ \ 1=
      s_1\leqslant s_2\leqslant \ldots\leqslant s_n,\qquad
 i\in\{1,\dots,n\}.\label{=arg}
\end{equation}
Let also
\begin{equation}
  R:=\bigoplus_{i=1}^n R_i \in \biggl[\bigoplus_{i=1}^n
W_p^k[0,1]\biggr],\quad
 \ (R_if)(\cdot)=c_if(\cdot)+(r_i*f)(\cdot),\ \ i\in\{1,\dots,n\}.\label{neweq5}
\end{equation}
Then the following is true:
\begin{itemize}
      \item[(1)]
      if  $1\leqslant\alpha\leqslant k-1$, then
      \begin{equation}\label{algargarg1}
      \begin{split}
             \Alg  A_k=
             \bigl\{&R :\ c_1=\dots=c_n\in \mathbb C;\  r_1\in
             W_p^{k-1}[0,1];\
              r_i(x)=s_i^{-1} r_1(s_i^{-1}x), \\  &1\leqslant i \leqslant
              n;
             \ r_1^{(l)}(0)=0,\ \ l\ne m\alpha-1,\ \
             \ 1\leqslant m\leqslant [(k-1)/{\alpha}]\bigr\};
      \end{split}
      \end{equation}
      \item[(2)]
      if $2\leqslant k\leqslant \alpha+\frac{1}{p}$, then
      \begin{equation}\label{algargatg2}
         \begin{split}
             \Alg  A_k=
             \bigl\{R:\ c_1=\dots=c_n\in \mathbb C;
             \ r_1\in W_{p,0}^{k-1}[0,1]&;\\  r_i(x):=s_i^{-1} r_1(s_i^{-1}x),\ \  &\ 1\leqslant i \leqslant
              n\bigr\}.
         \end{split}
      \end{equation}
\end{itemize}
\end{theorem}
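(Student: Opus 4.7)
My plan is to adapt the proof of Theorem~\ref{th5.3alg} to the Sobolev setting. There the central tool was Lemma~\ref{lem2.6}, applied with the auxiliary operators $M_i,N_i$ from~\eqref{neweq4new}; unfortunately $N_i$ truncates a dilated function by zero past $s_i^{-1}$, creating a jump incompatible with $W_p^k[0,1]$, so $N_i$ is not bounded on the Sobolev space and Lemma~\ref{lem2.6} cannot be invoked directly. Instead, the coherence between the diagonal blocks of an element of $\Alg A_k$ will be extracted via explicit polynomial computation followed by a WOT approximation, leaning on the description~\eqref{algsobolev} of $\Alg J_k^\alpha$ from \cite{I.Yu.Domanov and M.M.Malamud}.

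The first step is an explicit formula for $P(A_k)$ when $P(\zeta)=\sum_m a_m \zeta^m$ is a polynomial. Using $(J_k^\alpha)^m f = (x^{m\alpha-1}/\Gamma(m\alpha))*f$ together with $\lambda_i=\lambda_1 s_i^{-\alpha}$, the $i$-th diagonal block of $P(A_k)$ equals $P(0)\mathbb I + r_{P,i}*$ with $r_{P,1}(x)=\sum_{m\geqslant 1} a_m \lambda_1^m x^{m\alpha-1}/\Gamma(m\alpha)$ and $r_{P,i}(x)=s_i^{-1}r_{P,1}(s_i^{-1}x)$. Both the equality of the constants $c_1=\cdots=c_n=P(0)$ and the scaling relation between the convolution kernels therefore hold on the polynomial level. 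For the inclusion of $\Alg A_k$ into the set stated in the theorem, write any $R\in\Alg A_k$ as a WOT limit of a net $P_\nu(A_k)$. Block-diagonality is preserved in the limit, and each block $R_i$ lies in the WOT closure $\Alg(\lambda_i J_k^\alpha)=\Alg J_k^\alpha$, so by~\eqref{algsobolev} it has the form $R_i=c_i\mathbb I+r_i*$ with $r_i$ satisfying the conditions of case (1) or (2) as appropriate. Evaluating $P_\nu(A_k)f$ on $f\equiv 1$ at $x=0$ gives $c_i=\lim P_\nu(0)$ independent of $i$; the scaling relation $r_i(x)=s_i^{-1}r_1(s_i^{-1}x)$ is inherited by passing the polynomial-level identity to the limit, using that the convolution correspondence $r\mapsto(f\mapsto r*f)$ from~\eqref{commutantsobolev} is a bounded injection $W_p^{k-1}[0,1]\hookrightarrow[W_p^k[0,1]]$ and that the scaling map $r\mapsto s_i^{-1}r(s_i^{-1}\cdot)$ is weakly continuous on $W_p^{k-1}[0,1]$.

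For the reverse inclusion, take any $R$ of the form specified in~\eqref{algargarg1} or~\eqref{algargatg2}. Since the corresponding $r_1$ satisfies the hypothesis of~\eqref{algsobolev}, $R_1\in\Alg(\lambda_1 J_k^\alpha)$; thus there is a net of polynomials $P_\nu$ with $P_\nu(\lambda_1 J_k^\alpha)\to R_1$ in WOT. The polynomial computation above forces the convolution kernel of $P_\nu(\lambda_i J_k^\alpha)$ to be the $s_i$-scaled copy of that of $P_\nu(\lambda_1 J_k^\alpha)$; the same weak-continuity argument shows these scaled kernels converge to $s_i^{-1}r_1(s_i^{-1}\cdot)=r_i$, whence $P_\nu(\lambda_i J_k^\alpha)\to R_i$ in WOT for each $i$, and therefore $P_\nu(A_k)\to R$ in WOT. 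I expect the main obstacle to be precisely this kernel-convergence step: transferring WOT convergence of the convolution operators $r_{P_\nu,1}*$ on $W_p^k[0,1]$ to weak convergence of the kernels $r_{P_\nu,1}$ in $W_p^{k-1}[0,1]$, which is the mechanism that lets the scaling relation propagate across the $n$ components and ultimately yields the case (1)/case (2) dichotomy directly from~\eqref{algsobolev}.
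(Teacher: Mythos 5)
Your proposal is correct, but it departs from the paper's argument, and its opening premise is slightly off: the paper \emph{does} run Theorem~\ref{AlgProp2} through Lemma~\ref{lem2.6}, only with a repaired $N_i$. Instead of the zero--truncation of \eqref{neweq4new} (which, as you rightly observe, creates a jump at $x=s_i^{-1}$ and so is unbounded on $W_p^k[0,1]$), the paper sets $(N_if)(x)=f(s_ix)$ on $[0,s_i^{-1}]$ and continues it by the Taylor polynomial $\sum_{m=0}^{k-1}\frac{(xs_i-1)^m}{m!}f^{(m)}(1)$ on $[s_i^{-1},1]$; this glues $C^{k-1}$--smoothly, keeps $N_i$ bounded on $W_p^k[0,1]$, still satisfies $(\lambda_iJ_k^\alpha)^m=M_i(\lambda_1J_k^\alpha)^mN_i$ (only the values of $N_if$ on $[0,s_i^{-1}]$ enter), and hence yields $\Alg A_k=\{\bigoplus_i R_i:\ R_1\in\Alg(\lambda_1J_k^\alpha),\ R_i=M_iR_1N_i\}$ in one stroke, after which \eqref{algsobolev} and a direct computation of $M_iR_1N_i$ finish the proof. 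Your route replaces the intertwiners by an explicit polynomial computation plus a WOT limit, and the step you flag as the main obstacle is genuinely fillable: applying the operators to $\one$ gives $P_\nu(\lambda_1J_k^\alpha)\one-P_\nu(0)\one=Jr_{P_\nu,1}$, and since $J$ is an isomorphism of $W_p^{k-1}[0,1]$ onto the closed subspace $\{g\in W_p^k[0,1]:g(0)=0\}$, WOT convergence of the operators forces weak convergence of the kernels in $W_p^{k-1}[0,1]$; conversely, for fixed $f\in W_p^k[0,1]$ the map $r\mapsto r*f$ is bounded from $W_p^{k-1}[0,1]$ to $W_p^k[0,1]$, which gives back the WOT convergence needed for the reverse inclusion. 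The trade-off is clear: the paper's fix of $N_i$ is shorter and delivers the block structure wholesale, while your argument avoids constructing intertwiners altogether at the cost of the kernel-convergence lemma, and it makes transparent why the constants must coincide and why the kernels must be $s_i$-dilates of one another.
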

\begin{proof}
Let
\begin{equation*}
(M_if)(x): = f(s_i^{-1}x),\quad (N_if)(x):=\begin{cases}
f(s_ix),& x\in [0,s_i^{-1}],\\
\sum\limits_{m=0}^{k-1}\frac{(xs_i-1)^m}{m!}f^{(m)}(1),&x\in
[s_i^{-1},1].
\end{cases}
\end{equation*}
It can  easily be checked that
\begin{equation*}
(\lambda_iJ_k^\alpha)^m=M_i(\lambda_1J_k^\alpha)^m N_i, \qquad
m\in{\mathbb Z}_+, \qquad  i\in\{1,\dots,n\}.
\end{equation*}
Setting $A_i:= \lambda_iJ_k^\alpha$ and applying Lemma
\ref{alglemma} we obtain
\begin{equation}\label{4.5}
       \Alg A=\Bigl\{R=\bigoplus_{i=1}^n R_i: R_1\in \Alg
      (\lambda_1J_k^\alpha),\ \ R_i = M_iR_1N_i,\ \
      i\in\{2,\dots,n\}\Bigr\}.
\end{equation}
Next we confine ourselves to the case $1\leqslant\alpha\leqslant
k-1$.   The case $2\leqslant k\leqslant \alpha+\frac{1}{p}$ is
considered similarly. By \eqref{algsobolev}, $R_1\in \Alg
      (\lambda_1J_k^\alpha)$ if and only if
\begin{equation}\label{convoperrsob}
\begin{split}
R_1\ :\ f(x)\rightarrow c_1f(x)+\int_0^xr_1(x-t)f(t)\,dt,\quad \
c_1\in\mathbb C,\quad r_1\in W_p^{k-1}[0,1],
 \\
 r_1^{(l)}(0)=0,\ \ l\ne m\alpha-1,\ \
             \ 1\leqslant m\leqslant [(k-1)/{\alpha}].
\end{split}
\end{equation}
Straightforward calculations show that
$$
(M_iR_1N_if)(x)=c_1+\int_0^xs_i^{-1}r_1(s_i^{-1}(x-t))f(t)\,dt,\qquad
i\in\{2,\dots,n\}.
$$
Combining the last relations with \eqref{4.5} we arrive at the
required description.
        \end{proof}
In the proof of the following theorem we need a concept of the
weak operator topology in the algebra $[X]$. Recall the following
definition.
     \begin{definition}\label{defWeakTop}
Let $\{f_i\}_{i=1}^N)$ and $\{g_i\}_{i=1}^N$ be the sets of unit
vectors in $X$ and $X^*$, respectively, and let $\varepsilon$ be a
positive number. For any $R\in B[X]$ define $\mathcal V :=
\mathcal V (\varepsilon;\{f_i,g_i\}_{i=1}^N)$ to be the set of all
operators $T$ satisfying
$$
|(T - R)f_i, g_i)|<\varepsilon, \qquad i\in\{1,\dots,N\}.
$$
Then $\mathcal V$ is a weak  neighborhood of $R$ and the family of
all such sets $\mathcal V$ is a base of weak neighborhoods of $R.$
           \end{definition}
\begin{theorem}\label{AlgProp1}
 Suppose
      $
      A_k=\bigoplus_{j=1}^r\lambda_j J_k^\alpha
      $
      is defined on
      $
      \bigoplus_{j=1}^r W_p^k[0,1]
      $
      and
\begin{equation*}
\arg\lambda_i\ne \arg\lambda_j\pmod{2\pi},\qquad
1\leqslant i<j\leqslant r.
\end{equation*}
Let also
\begin{equation*}
        R:=\bigoplus_{j=1}^r R_j \in \biggl[\bigoplus_{j=1}^r
W_p^k[0,1]\biggr],\quad
 \ (R_jf)(\cdot)=c_jf(\cdot)+(r_j*f)(\cdot), \quad   j\in\{1,\dots,r\}.
\end{equation*}
Then the following are true:
\begin{itemize}
\item[(1)]
      if\ \  $1\leqslant \alpha\leqslant k-1$, then
      \begin{equation}
      \begin{split}
      \Alg  A_k=\Bigl\{&R:\ c_1=\dots =c_r\in  \mathbb C;\
      r_j\in  W_p^{k-1}[0,1],\\
      &r_j^{(\alpha m-1)}(0)=
      (\lambda_j\lambda_1^{-1})^{m}r_1^{(\alpha m-1)}(0),
      \ \ m\leqslant \Bigl[\frac{k-1}{\alpha}\Bigr],\ \ 1\leqslant j\leqslant r
      ;\\
       &r_j^{(l)}(0)=0,\ \ l\ne \alpha m-1,\
             \ m\leqslant [(k-1)/{\alpha}],\ 1\leqslant j\leqslant r
             \Bigr\};\label{Alg1}
      \end{split}
      \end{equation}
\item[(2)]
      if\ \  $2\leqslant k\leqslant \alpha+\frac 1p$, then
      \begin{equation*}
             \Alg  A_k=\bigl\{R:\ c_1=\dots =c_r\in \mathbb C;\ \
             r_j \in W_{p,0}^{k-1}[0,1],\ \ \ 1\leqslant j\leqslant r \bigr\}.
      \end{equation*}
\end{itemize}
\end{theorem}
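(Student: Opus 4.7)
The plan is to mirror the strategy of Theorem~\ref{split}, refined for the Sobolev setting where the identity contributes an extra scalar degree of freedom and the finite jet of the convolution kernel at $0$ produces algebraic tying conditions across the components. I proceed in three stages: (i) extract the block structure and coordinate-wise algebra membership of any $R\in\Alg A_k$, (ii) read off the additional tying conditions across coordinates, and (iii) construct approximating polynomial sequences for the converse.

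For (i), every operator $P(A_k)=\bigoplus_j P(\lambda_j J_k^\alpha)$ is block-diagonal, and weak operator limits preserve block-diagonality. Moreover, the map sending an operator on $\bigoplus_j W_p^k[0,1]$ to its $j$-th diagonal block is weakly continuous. Hence any $R\in\Alg A_k$ has the form $R=\bigoplus_j R_j$ with $R_j$ a weak limit of $P(\lambda_j J_k^\alpha)$, so $R_j\in\Alg(\lambda_j J_k^\alpha)=\Alg(J_k^\alpha)$. By~\eqref{commutantsobolev} and~\eqref{algsobolev}, we can write $R_jf=c_jf+r_j*f$ with $r_j\in W_p^{k-1}[0,1]$; in case~(1), $r_j^{(l)}(0)=0$ for $l\neq\alpha m-1$, $m\leqslant[(k-1)/\alpha]$, and in case~(2), $r_j\in W_{p,0}^{k-1}[0,1]$.

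For (ii), a direct computation yields, for $P(z)=\sum_{m=0}^{M}a_mz^m$,
\begin{equation*}
P(\lambda_j J_k^\alpha)f=a_0f+\Bigl(\sum_{m=1}^{M}a_m\lambda_j^m\tfrac{x^{\alpha m-1}}{\Gamma(\alpha m)}\Bigr)*f,
\end{equation*}
so the scalar part $a_0$ is common to every coordinate, and the $(\alpha m-1)$-st derivative at $0$ of the $j$-th kernel equals $a_m\lambda_j^m$. Since the point evaluations $r\mapsto r^{(l)}(0)$ (for $l\leqslant k-2$) on $W_p^{k-1}[0,1]$ are continuous functionals, they are preserved under weak operator limits. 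This forces $c_1=\cdots=c_r$ and $r_j^{(\alpha m-1)}(0)=(\lambda_j/\lambda_1)^m r_1^{(\alpha m-1)}(0)$ for $m\leqslant[(k-1)/\alpha]$, yielding the necessary conditions.

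For (iii), given $R=\bigoplus_j(c\mathbb I+r_j*)$ satisfying the stated conditions, decompose $R=c\mathbb I+P_0(A_k)+R^{\mathrm{c}}$ with
\begin{equation*}
P_0(z):=\sum_{m=1}^{[(k-1)/\alpha]}r_1^{(\alpha m-1)}(0)\,\lambda_1^{-m}\,z^m,
\end{equation*}
so that, by the tying relations, the $j$-th component of $P_0(A_k)$ exactly matches the prescribed jet of $r_j$ at $0$; thus $R^{\mathrm{c}}=\bigoplus_j(\tilde r_j*)$ with $\tilde r_j\in W_{p,0}^{k-1}[0,1]$. The terms $c\mathbb I$ and $P_0(A_k)$ are manifestly in $\Alg A_k$. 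To place $R^{\mathrm{c}}$ in $\Alg A_k$, adapt the first proof of Theorem~\ref{split}: using the distinctness of $\arg\lambda_j$ and a cyclic vector for $A_k$, construct for each $j$ a polynomial sequence $Q_n^{(j)}$ with $Q_n^{(j)}(A_k)\to \mathbb O\oplus\cdots\oplus(\tilde r_j*)\oplus\cdots\oplus\mathbb O$ in the weak (or strong) operator topology. The main obstacle is precisely this last step: the $L_p$-coordinate-separation argument of Theorem~\ref{split} must be carried out in the Sobolev topology, which requires controlling boundary derivatives throughout the approximation. The Sobolev regularity $\tilde r_j\in W_{p,0}^{k-1}$ is compatible with polynomial approximation, but the verification demands careful bookkeeping of the kernel's jet at $0$.
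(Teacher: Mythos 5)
Your necessity argument (steps (i)--(ii)) is sound and is essentially what the paper does: the coordinatewise inclusion $\Alg A_k\subset\bigoplus_j\Alg(\lambda_jJ_k^\alpha)$ together with \eqref{algsobolev} gives the form of each $R_j$, and the tying conditions $c_1=\dots=c_r$ and $r_j^{(\alpha m-1)}(0)=(\lambda_j\lambda_1^{-1})^{m}r_1^{(\alpha m-1)}(0)$ follow because $c_j=(R_j\one)(0)$ and $r_j^{(l)}(0)=(R_j\one)^{(l+1)}(0)$, $l\leqslant k-2$, are weak-operator continuous functionals of $R$; the paper implements exactly this with the test pairs $\one$ and $x^{\alpha m}/\Gamma(\alpha m)$ and an explicit $\varepsilon$-neighborhood. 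Your reduction of the converse to showing $R^{\mathrm c}=\bigoplus_j(\tilde r_j*\,\cdot\,)\in\Alg A_k$ with independent $\tilde r_j\in W_{p,0}^{k-1}[0,1]$ is also the right reduction (it is the paper's decomposition $R=R_0+R_{k-2}$).

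The gap is that last step, which you flag but do not close, and the route you propose for it would fail as stated. The first proof of Theorem \ref{split} is launched from a cyclic vector $g$ of $A$ on $\bigoplus L_p[0,1]$; but $A_k$ on $\bigoplus_{j=1}^rW_p^k[0,1]$ is \emph{not} cyclic --- by Theorem \ref{th5.6}, $\mu_{A_k}=\sum_j\min(-[-\alpha],k)\geqslant r\geqslant 2$ --- so there is no cyclic vector with which to run the coordinate-separation argument, and no bookkeeping of jets at $0$ repairs that. The paper circumvents the difficulty rather than confronting it: by Theorem \ref{split} and Corollary \ref{corollary32} the splitting of the algebra holds for $A_{k,0}$ on $\bigoplus W_{p,0}^k[0,1]$ (where the operator is isometrically equivalent to the $L_p$ model), so $\mathbb O\oplus\dots\oplus\lambda_jJ_{k,0}^\alpha\oplus\dots\oplus\mathbb O$ is a weak limit of polynomials in $A_{k,0}$; since $\ran A_k^{k}\subset\bigoplus W_{p,0}^k[0,1]$, composing with $A_k^{k}$ transfers this to $M_j:=\mathbb O\oplus\dots\oplus(\lambda_jJ_k^\alpha)^{k+1}\oplus\dots\oplus\mathbb O\in A_k^{k}\Alg A_k\subset\Alg A_k$, and from the $M_j$ together with \eqref{algsobolev} one obtains the inclusion $\Alg A_k\supset\bigl\{R:\ c_1=\dots=c_r,\ r_j\in W_{p,0}^{k-1}[0,1]\bigr\}$ (the paper's \eqref{Alg2inclusion}), which is exactly what your $R^{\mathrm c}$ requires. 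You should replace the appeal to a cyclic vector of $A_k$ by this transfer through $W_{p,0}^k[0,1]$.
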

\begin{proof}
$(i)$ Theorem \ref{split}  and Corollary \ref{corollary32} imply
that $\mathbb O\oplus\dots\oplus
        \mathbb O\oplus\lambda_jJ_{k,0}^\alpha\oplus\mathbb O\oplus\dots\oplus\mathbb
        O\in \Alg (\bigoplus_{j=1}^r\lambda_j
        J_{k,0}^\alpha)$
for any $j\in\{1,\dots,r\}$. It easily implies that $M_j:=\mathbb
O\oplus\dots\oplus
        \mathbb O\oplus\ (\lambda_jJ_k^\alpha)^{k+1}\oplus\mathbb O\oplus\dots\oplus\mathbb
        O\in A^k\Alg A_k$.
 Thus $M_j\in \Alg A_k$ and
\eqref{algsobolev} implies that if
 either $\alpha\in \mathbb Z_+\setminus\{0\}$ or $\alpha
>k-\frac{1}{p}$, then
\begin{equation}
             \Alg  A_k\supset\bigl\{R:\ c_1=\dots =c_r\in \mathbb C;\
             r_j \in W_{p,0}^{k-1}[0,1], \ \ 1\leqslant j\leqslant r \bigr\}.
	     \label{Alg2inclusion}
\end{equation}

$(ii)$ Let  $2\leqslant k\leqslant \alpha+\frac 1p$. Then combining the obvious
inclusion
 $\Alg A_k\subset \bigoplus_{j=1}^r \Alg \lambda_j J_k^\alpha$ with \eqref{algsobolev}
 we arrive at opposite inclusion in \eqref{Alg2inclusion}. Thus, (2)
is proved.

$(iii)$ Let us prove the inclusion "$\subset$" in \eqref{Alg1}.
Description \eqref{algsobolev} and inclusion
 $\Alg A_k\subset \bigoplus_{j=1}^r \Alg \lambda_j J_k^\alpha$ imply
 that
\[
\begin{split}
  \Alg  A_k\subset\bigl\{&R:\ c_j\in  \mathbb C;\
      r_j\in  W_p^{k-1}[0,1],\\
   &r_j^{(l)}(0)=0,\ \ l\ne \alpha m-1,\
             \ m\leqslant [(k-1)/{\alpha}],\ 1\leqslant j\leqslant r
             \bigr\}.
\end{split}
\]

For $j\in\{1,\dots,r\}$ and $m\in\{1,\dots,[\frac{k-1}{\alpha}]\}$
by definition, put :
$$
x_{jm}:=\underbrace{0\oplus\dots\oplus 0\oplus \one}_j\oplus\, 0\oplus\dots
\oplus 0,\quad
 y_{jm}:=\underbrace{0\oplus\dots\oplus 0\oplus
\frac{x^{\alpha m}}{\Gamma(\alpha m)}}_j\oplus\, 0\oplus\dots \oplus 0.
$$
Let $R:=\bigoplus_{j=1}^rR_j\in\Alg A_k$. Choose $\varepsilon_1>0$
and  put
$$
\varepsilon:=\min\bigl\{ |{2^{-1}\lambda_j^m\varepsilon_1}| :\  1\leq
j\leq r,\ \ 0\leq m\leq k_1 \bigr\} \qquad \text {and} \qquad  k_1
:=\Bigl[\frac{k-1}{\alpha}\Bigr].
$$
Next, choose  vectors $\{x_{jm}\}_{j,m=1}^{r,k_1}$ and $\{
y_{jm}\} _{j,m=1} ^{r, k_1}$ belonging to $W_p^k[0,1]$ and\\
$(W_p^k[0,1])^*=W_{p'}^k[0,1]$, respectively and define a weak
neighborhood \\ $\mathcal V := \mathcal V (\varepsilon;
\{x_{jm}\}_{j,m=1}^{r,k_1}, \{ y_{jm}\} _{j,m=1} ^{r, k_1})$ of
$R$ according to Definition \ref{defWeakTop}. Then by definition
of $\Alg  A_k$ there exists a polynomial
$p(x):=\sum_{l=0}^Na_lx^l$ such that $p(A_k)$ belongs to the weak
neighborhood $\mathcal V$ of $R$, $p(A_k)\in \mathcal V$, that is
\begin{equation}\label{eq1}
|((R-p(A_k))x_{jm},y_{jm})|<\varepsilon,\qquad
j\in\{1,\dots,r\},\qquad m\in\{0,\dots, k_1\}.
   \end{equation}
It is clear that \eqref{eq1} is equivalent to the following system
\begin{equation*}
\Big|\Bigl(\bigl(R_j-p(\lambda_jJ_k^\alpha)\bigr)\one,\frac{x^{\alpha
m}}{\Gamma(\alpha m)}\Bigr)\Big|<\varepsilon,\quad j\in\{1,\dots,r\},\quad
m\in\{0,\dots,k_1\}.
\end{equation*}
After simple computations this systems reduces to the following
one
$$
|c_j-a_0|<\varepsilon,\quad \bigg|\frac{r_j^{(\alpha
m-1)}(0)}{\lambda_j^m}-a_m\bigg|<\frac{\varepsilon}{\lambda_j^m},\quad
j\in\{1,\dots,r\},\quad m\in\{0,\dots,k_1\}.
$$
Finally, triangle inequality implies that
$$
|c_1-c_j|<2\varepsilon\leq \varepsilon_1,\quad \bigg|\frac{r_1^{(\alpha
m-1)}(0)}{\lambda_1^m}-\frac{r_j^{(\alpha
m-1)}(0)}{\lambda_i^m}\bigg|<\frac{2\varepsilon}{\lambda_j^m}\leq\varepsilon_1,\quad
m\in\{0,\dots,k_1\}.
$$
Hence,
\begin{equation*}
\begin{split}
&c_j=c_1,\\
 &r_j^{(\alpha m-1)}(0)=
      (\lambda_j\lambda_1^{-1})^{m}r_1^{(\alpha
      m-1)}(0), \quad
   m\in\{1,\dots,k_1\},  \quad j\in\{1,\dots,r\}.
\end{split}
\end{equation*}
Thus, the inclusion "$\subset$" in \eqref{Alg1} is proved.

$(iiii)$ Let $R$ belongs to the algebra defined by the right side
of \eqref{Alg1}. Since $r_j\in  W_p^{k-1}[0,1]$, it follows that
\begin{flalign*}
 r_j(x) = r_{j,0}+r_{j,k-2}:= \left(r_j(x)-
\sum\limits_{i=0}^{k-2}r_j^{(i)}(0)\frac{x^i}{i!}\right)+
\sum\limits_{i=0}^{k-2}r_j^{(i)}(0)\frac{x^i}{i!},\ \ j\in\{1,\dots,r\}.
\end{flalign*}
According to this decomposition we can write $R=R_0+R_{k-2}$,
where
$$
R_0=R_{1,0}\oplus\dots \oplus R_{r,0},\qquad
R_{k-2}=R_{1,k-2}\oplus\dots \oplus R_{r,k-2},
$$
and
\begin{flalign*}
\ &\ & (R_{j,0}f)(\cdot):= (r_{j,0}*f)(\cdot),\quad
(R_{j,k-2}f)(\cdot):= (r_{j,k-2}*f)(\cdot),\ &\ & j\in\{1,\dots,r\}.
\end{flalign*}
Furthermore,  $R_0\in \Alg A_k$ by \eqref{Alg2inclusion} and
$R_{k-2}\in \Alg A_k$ by $(iii)$ . Thus (1) is proved.
\end{proof}
Combining Theorems \ref{AlgProp1} and  \ref{AlgProp2} we arrive at
\begin{theorem}
     Suppose
     $
     A_k(j):= \bigoplus_{i=1}^{n_j}\lambda_{ji} J_k^\alpha
     $
     is defined on $
     \bigoplus_{i=1}^{n_j}W_p^k[0,1],
     $
      $
     \ j\in\{1,\dots,r\}
     $
     and
     $
     A_k:=\bigoplus_{j=1}^r A(j)
     $
     is defined on
     $
     \bigoplus_{j=1}^r(\bigoplus_{i=1}^{n_j}W_p^k[0,1])
     $.
Let also
\begin{flalign*}
    \lambda_{ji}&=\lambda_{j1}/s_{ji}^{\alpha}, 1=s_{j1}\leqslant s_{j2}\leqslant \ldots\leqslant s_{jn_j},
                              &1&\leqslant j \leqslant r,\ \ 1\leqslant i \leqslant n_j,\\
\arg\lambda_{i1}&\ne \arg\lambda_{j1}           \pmod{2\pi},
                              &1&\leqslant i<j\leqslant r.\ \
\end{flalign*}
     Let also
\begin{flalign*}
 R:=\bigoplus_{j=1}^r\bigoplus_{i=1}^{n_j} R_{ji} \in
\biggl[\bigoplus_{j=1}^r\bigoplus_{i=1}^{n_j} W_p^k[0,1]\biggr],\ \
(R_{ji}f)(\cdot)=(r_{ji}*f)(\cdot),\ \ 1\leqslant j \leqslant r.
\end{flalign*}
Then the following are true:
\begin{itemize}
      \item[(1)]
      if\ \  $1\leqslant \alpha\leqslant k-1$, then
      \begin{equation*}
      \begin{split}
      \Alg  A_k=\Bigl\{&c\mathbb I +R:\ c\in  \mathbb C;\
      r_{j1}\in  W_p^{k-1}[0,1],\ 1\leqslant j\leqslant r;\\
      &r_{ji}(x)=s_{ji}^{-1}r_{j1}(s_{ji}^{-1}x),\ \ 1\leqslant j\leqslant r,
      \ \ 1\leqslant i\leqslant n_j;\\
      &r_{j1}^{(\alpha m-1)}(0)=
      (\lambda_{j1}\lambda_{11}^{-1})^{m}r_{11}^{(\alpha m-1)}(0),
      \ \ \ m\leqslant \Bigl[\frac{k-1}{\alpha}\Bigr],\ \ 1\leqslant j\leqslant r;\\
      &r_{j1}^{(l)}(0)=0,\ \ l\ne \alpha m-1,\
         m\leqslant \Bigl[\frac{k-1}{\alpha}\Bigr],\ \ 1\leqslant j\leqslant r\Bigr\};
      \end{split}
      \end{equation*}
      \item[(2)]
      if\ \  $2\leqslant k\leqslant \alpha+\frac 1p$, then
      \begin{equation*}
      \begin{split}
      \Alg  A_k=\bigl\{&c\mathbb I +R:\ c\in  \mathbb C;\
      r_{j1}\in  W_{p,0}^{k-1}[0,1],\ 1\leqslant j\leqslant r;\\
      &r_{ji}(x)=s_{ji}^{-1}r_{j1}(s_{ji}^{-1}x),\ \ 1\leqslant j\leqslant r,
      \ \ 1\leqslant i\leqslant n_j\bigr\}.
      \end{split}
      \end{equation*}
\end{itemize}
\end{theorem}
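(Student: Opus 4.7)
The strategy is to fuse Theorem \ref{AlgProp1} (outer layer, across indices $j$ with pairwise distinct arguments $\arg\lambda_{j1}$) with Theorem \ref{AlgProp2} (inner layer, across indices $i$ within each $j$-block, where all $\lambda_{ji}$ share the argument of $\lambda_{j1}$ and differ only by the positive real factors $s_{ji}^{-\alpha}$). Accordingly I would write $A_k=\bigoplus_{j=1}^r A_k(j)$ with $A_k(j):=\bigoplus_{i=1}^{n_j}\lambda_{ji}J_k^\alpha$, and handle the range $1\leqslant\alpha\leqslant k-1$ in detail; the range $2\leqslant k\leqslant \alpha+1/p$ runs in parallel and is strictly easier, because there all $r_{j1}$ must lie in $W_{p,0}^{k-1}[0,1]$ and the cross-$j$ boundary linkage becomes vacuous (it follows immediately from $\Alg A_k\subseteq\bigoplus_{j,i}\Alg(\lambda_{ji}J_k^\alpha)$ together with Theorem \ref{AlgProp2}(2)).

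For the inclusion ``$\supseteq$'' I would split any candidate $R=c\mathbb I+\bigoplus_{j,i}R_{ji}$ as $R=R_0+(c\mathbb I+R_{k-2})$, where $r_{j1}=r_{j1,0}+r_{j1,k-2}$ is the decomposition into its $W_{p,0}^{k-1}[0,1]$-part and its Taylor polynomial of order $k-2$ at $0$. The ``polynomial'' piece $c\mathbb I+R_{k-2}$ is realized as $p(A_k)$ for
\[
p(x):=c+\sum_{m=1}^{[(k-1)/\alpha]}\lambda_{11}^{-m}\,r_{11}^{(\alpha m-1)}(0)\,x^m;
\]
a short computation using $\lambda_{ji}^m=(\lambda_{j1}/\lambda_{11})^m\lambda_{11}^m/s_{ji}^{\alpha m}$ verifies that $p(A_k)$ has exactly the prescribed polynomial convolution kernels on every slot. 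The ``smooth'' piece $R_0$ has $r_{j1,0}\in W_{p,0}^{k-1}[0,1]$ and within each $j$-block satisfies the scaling relation of Theorem \ref{AlgProp2}(2), so $R_0(j)\in\Alg A_k(j)$; the argument of step (i) in the proof of Theorem \ref{AlgProp1} (via Theorem \ref{split} and Corollary \ref{corollary32}) then shows that the block-diagonal embedding of each $R_0(j)$ lies in $\Alg A_k$, whence $R_0\in\Alg A_k$.

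For the inclusion ``$\subseteq$'' I take $R\in\Alg A_k$. The obvious containment $\Alg A_k\subseteq\bigoplus_{j,i}\Alg(\lambda_{ji}J_k^\alpha)$ combined with \eqref{algsobolev} yields $(R_{ji}f)(\cdot)=c_{ji}f(\cdot)+(r_{ji}*f)(\cdot)$ with $r_{ji}\in W_p^{k-1}[0,1]$ and $r_{ji}^{(l)}(0)=0$ for $l$ outside $\{\alpha m-1:1\leqslant m\leqslant [(k-1)/\alpha]\}$. Within each $j$-block I repeat the intertwining argument of Theorem \ref{AlgProp2}: the operators $M_{ji},N_{ji}$ from its proof, extended by identities on the other $j'$-blocks, satisfy $(\lambda_{ji}J_k^\alpha)^m=M_{ji}(\lambda_{j1}J_k^\alpha)^m N_{ji}$, so Lemma \ref{lem2.6} forces $R_{ji}=M_{ji}R_{j1}N_{ji}$, hence $c_{ji}=c_{j1}$ and $r_{ji}(x)=s_{ji}^{-1}r_{j1}(s_{ji}^{-1}x)$. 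For the cross-$j$ linkage of boundary data I approximate $R$ weakly by polynomials $p_n(A_k)$ and test against the pairs of vectors $x_{jm}$, $y_{jm}$ that have $\one$ and $x^{\alpha m}/\Gamma(\alpha m)$ placed in the $(j,1)$-slot and zero elsewhere, exactly as in step (iii) of the proof of Theorem \ref{AlgProp1}; the triangle-inequality estimate there then forces $c_{11}=\dots=c_{r1}=:c$ and $r_{j1}^{(\alpha m-1)}(0)=(\lambda_{j1}/\lambda_{11})^m r_{11}^{(\alpha m-1)}(0)$ for every admissible $m$.

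The main technical obstacle is installing the block-indicator elements into $\Alg A_k$ needed in the $\supseteq$ argument. The natural candidates arise from Theorem \ref{split} applied to $\bigoplus_{j}\lambda_{j1}J^\alpha$ on $\bigoplus_j L_p[0,1]$, but they must be transported through Corollary \ref{corollary32} to live in $\bigoplus_j W_{p,0}^k[0,1]$ and then lifted to $\bigoplus_j W_p^k[0,1]$ by pre-composition with a sufficiently high power of $A_k$ (whose range lies in $\bigoplus_j W_{p,0}^k[0,1]$), precisely in the spirit of step (i) of the proof of Theorem \ref{AlgProp1}. Everything else is either a direct appeal to Theorems \ref{AlgProp1}, \ref{AlgProp2}, and Lemma \ref{lem2.6}, or a routine verification that the scaling relation within each block and the boundary linkage across blocks are mutually compatible and together suffice, since the former only constrains $r_{j1}$ in the interior while the latter only couples the boundary values $r_{j1}^{(\alpha m-1)}(0)$.
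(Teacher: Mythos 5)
Your proposal is correct and follows exactly the route the paper intends: the paper offers no proof beyond the phrase ``Combining Theorems \ref{AlgProp1} and \ref{AlgProp2} we arrive at\dots'', and your argument is a faithful, correctly executed elaboration of precisely that combination (block-restriction to get the within-block scaling via Theorem \ref{AlgProp2}, weak approximation against the test vectors $x_{jm},y_{jm}$ for the cross-block boundary linkage, and the splitting $R=R_0+(c\mathbb I+R_{k-2})$ with the lifting through a high power of $A_k$ for the reverse inclusion). The only cosmetic point is that your appeal to Lemma \ref{lem2.6} should be read as applied to the single block $A_k(j)$ after first observing that the compression of $\Alg A_k$ to the $j$-th block lands in $\Alg A_k(j)$; this is immediate and does not affect correctness.
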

\begin{remark}
In this paper we do not consider questions about the reflexivity
of the operator $A_k$. Such results are contained in
\cite{I.Yu.Domanov and V.V.Surovtseva}.
\end{remark}
\subsection {The commutant  $\{A_k\}'$} \label{sec6}
 As in Section \ref{Preliminaries}  we define
operator $ L_{a}\in [W_p^k[0,1]]$ for $a\in (0,1]$ and $ L_{a}\in
[W_{p,0}^k[0,1],W_p^k[0,1]]$ for $a\in (1,\infty)$ by
\begin{equation}
      L_a:\ f(x)\to
                 g(x)=
                       \begin{cases}
                           f(ax)       &  0<a\leqslant 1, \\
                             \begin{cases}
                                   0,         & x\in [0,1-a^{-1}], \\
                                   f(ax-a+1), & x\in [1-a^{-1},1],
                             \end{cases} & a>1.
                       \end{cases}\label{definitionLa}
\end{equation}
Next we investigate solvability  of the equation
\begin{equation}\label{4.11}
      RJ^\alpha_k=c J^\alpha_kR
\end{equation}
in the space $X = W_p^k[0,1]$  and describe the set of its
solutions. The following proposition plays a crucial role in the
sequel. Its proof is based on Corollary \ref{mainfor01} and use
some ideas from \cite{I.Yu.Domanov and M.M.Malamud}.
\begin{proposition}\label{mainfor1}
      Let $c\in\mathbb C$ and let $R\in [X]$ be a
      solution  of  equation \eqref{4.11} where $X = W_p^k[0,1]$.
     Then
\begin{itemize}
     \item[(1)] \ \ If  $c\not\in \mathbb R_+$, then $R=0$;
     \item[(2)]
       \ \ If $0<c=a^\alpha\leqslant 1$, $a>0$, then $R\in L_a\{J_k^\alpha\}^{\prime}=
     \{J_k^\alpha\}^{\prime}L_a$,  that is,
     $$
     (Rf)(x)=\frac{d}{dx}\int_0^xr(x-t)f(at)\,dt, \qquad r\in W_p^k[0,1];
     $$
     \item[(3)]
     \ \ If  $1<c=a^\alpha$, $a>0$, then
     $
     R\in L_a\{J_k^\alpha\}^{\prime}
     $, that is,
\begin{align*}
(&Rf)(x)=\bigl(L_a\frac{d}{dx}(r*f)\bigr)(x)\\
     &=\begin{cases}
             0,                                 & x\in [0,1-a^{-1}], \\
        a^{-1}\frac{d}{dx}\int\limits_{0}^{ax-a+1}
                             r(ax-a+1-t)f(t)\,dt,\ \ r\in W_{p,0}^k[0,1], & x\in [1-a^{-1},1].
     \end{cases}
\end{align*}
\end{itemize}
\end{proposition}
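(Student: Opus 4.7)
The plan is to reduce equation~\eqref{4.11} on $W_p^k[0,1]$ to its $L_p[0,1]$-counterpart, which has already been handled in Corollary~\ref{mainfor01}, by restricting $R$ to the invariant subspace $W_{p,0}^k[0,1]$ and then lifting the conclusion back to the full Sobolev space. The easy ``sufficiency'' direction of (2) and (3) is immediate: a substitution $s=at$ in the defining integral of $J^\alpha$ yields the scaling identity $L_a J_k^\alpha = a^\alpha J_k^\alpha L_a$, so if $R=L_aK$ with $K\in\{J_k^\alpha\}'$, then $RJ_k^\alpha = L_aKJ_k^\alpha = L_aJ_k^\alpha K = a^\alpha J_k^\alpha L_aK = cJ_k^\alpha R$; the two branches of the definition~\eqref{definitionLa} of $L_a$ produce the two explicit formulae displayed in~(2) and~(3).

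For the classification (the converse direction), I would first establish that $R$ preserves $W_{p,0}^k[0,1]$. Fix an integer $m$ with $m\alpha\geq k$. Then $\ran(J_k^\alpha)^m \subset W_{p,0}^k[0,1]$, because any function of the form $J_k^{m\alpha}f$ vanishes at $0$ to order $m\alpha\geq k$. The iterated commutation $R(J_k^\alpha)^m = c^m (J_k^\alpha)^mR$ therefore gives $R(\ran(J_k^\alpha)^m)\subset W_{p,0}^k[0,1]$. Since $C_0^\infty(0,1)\subset \ran(J_k^\alpha)^m$ is dense in $W_{p,0}^k[0,1]$, continuity of $R$ and closedness of $W_{p,0}^k[0,1]$ then force $R(W_{p,0}^k[0,1])\subset W_{p,0}^k[0,1]$.

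With invariance in hand, Lemma~\ref{scalL2.1} supplies the isometry $U=d^k/dx^k\colon W_{p,0}^k[0,1]\to L_p[0,1]$ intertwining $J_{k,0}^\alpha$ and $J^\alpha$, so $\widetilde R := U(R\upharpoonright W_{p,0}^k[0,1])U^{-1}\in[L_p[0,1]]$ satisfies $\widetilde R J^\alpha = cJ^\alpha \widetilde R$. Corollary~\ref{mainfor01} then yields $\widetilde R = 0$ when $c\notin\mathbb R_+$, and $\widetilde R\in L_a\{J^\alpha\}'$ when $c = a^\alpha>0$; unwinding the isometry gives the asserted description of $R\upharpoonright W_{p,0}^k[0,1]$. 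It remains to extend to $W_p^k[0,1] = W_{p,0}^k[0,1]\dotplus P_{k-1}$. For any $p\in P_{k-1}$ the identity $J_k^\alpha Rp = c^{-1}R(J_k^\alpha p)$, combined with injectivity of $J_k^\alpha$ and the already known action of $R$ on $J_k^\alpha p$, determines $Rp$ uniquely: in case~(1) it forces $Rp=0$, hence $R=0$, while in cases~(2)--(3) the extension is precisely the one produced by the explicit formulae in the statement, whose boundedness on $W_p^k[0,1]$ and commutation~\eqref{4.11} are then confirmed by direct calculation using~\eqref{commutantsobolev} and~\eqref{definitionLa}.

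The principal obstacle is the invariance step: the commutation~\eqref{4.11} by itself does not imply $R(W_{p,0}^k[0,1])\subset W_{p,0}^k[0,1]$, and one has to argue indirectly through the range and density properties of iterated Volterra operators $(J_k^\alpha)^m$. A secondary subtlety is that in case~(3) the operator $L_a$ is only defined on $W_{p,0}^k[0,1]$, so the extension of $R$ from $W_{p,0}^k[0,1]$ onto the polynomial complement $P_{k-1}$ must be performed in a way compatible with this restricted domain while still producing a bounded operator on the larger space $W_p^k[0,1]$.
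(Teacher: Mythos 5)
Your strategy is essentially the paper's: the paper decomposes $W_p^k[0,1]=W_{p,0}^k[0,1]\dotplus\myspan\{1,x,\dots,x^{k-1}\}$, shows the corner block $R_{21}$ vanishes (your invariance step, proved there via nilpotency of the quotient operator and density of $\ran J_{11}^{\alpha k}$ in $W_{p,0}^k[0,1]$), applies Corollary \ref{mainfor01} to the restriction, upgrades the regularity of the kernel, and recovers $R$ on the polynomial complement from the injectivity of $J_k^{\alpha k}$. However, two of your steps do not work as written.

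First, $C_0^\infty(0,1)$ is \emph{not} dense in $W_{p,0}^k[0,1]$ for $k\geqslant 1$: the functionals $f\mapsto f^{(i)}(1)=\int_0^1\frac{(1-t)^{k-1-i}}{(k-1-i)!}\,f^{(k)}(t)\,dt$, $0\leqslant i\leqslant k-1$, are continuous in the $W_{p,0}^k$-norm and vanish on $C_0^\infty(0,1)$, so the closure of that set has codimension $k$. What you actually need (and what the paper uses) is the density of $\ran (J_k^\alpha)^m$ itself in $W_{p,0}^k[0,1]$, which follows from Lemma \ref{scalL2.1} together with the density of $\ran J^{m\alpha}$ in $L_p[0,1]$; with that substitution your invariance argument is sound. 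Second, the extension step is circular as stated: for a monomial $x^j$ with $j<k$ the function $J_k^\alpha x^j$ is a multiple of $x^{j+\alpha}$ and in general does \emph{not} lie in $W_{p,0}^k[0,1]$, so ``the already known action of $R$ on $J_k^\alpha p$'' is not in fact known. You must iterate: from $(J_k^\alpha)^mRp=c^{-m}R(J_k^\alpha)^mp$ with $m\alpha\geqslant k$ one has $(J_k^\alpha)^mp\in W_{p,0}^k[0,1]$, and injectivity of $(J_k^\alpha)^m$ then determines $Rp$ --- exactly the paper's computation with $m=k$. Finally, ``unwinding the isometry'' only delivers a kernel $r\in L_{p'}[0,1]$ from Corollary \ref{mainfor01}; the assertions $r\in W_p^k[0,1]$ in case (2) and $r\in W_{p,0}^k[0,1]$ in case (3) require the additional evaluation of $(RJ_k^{\alpha k}\one)(x)$, which shows $r=R\one$ and which the paper carries out explicitly. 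All three repairs stay within your (and the paper's) framework, so the architecture is correct even though the details need fixing.
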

\begin{proof}
Let $c\in\mathbb C$ and $ RJ_k^\alpha=c J_k^\alpha R $. Consider
the block matrix representations of the operators $J_k^\alpha$ and
$R$ with respect to the direct sum decomposition $
W_p^k[0,1]=W_{p,0}^k[0,1]\dotplus X_k $, where  $X_k:=\myspan\{1,x,\dots,
x^{k-1}\}$. Since $ W_{p,0}^k[0,1] \in \Lat J_k^\alpha $, one has
\begin{equation*}
      J_k^\alpha=
                 \left(
                  \begin{matrix}
                   J_{11}^\alpha  & J_{12}^\alpha\\
                   \mathbb O      & J_{22}^\alpha
                  \end{matrix}
                 \right),\qquad
      R         =
                  \left(
                   \begin{matrix}
                     R_{11} & R_{12}\\
                     R_{21} & R_{22}
                   \end{matrix}
                  \right).
\end{equation*}
Now  the equality $ RJ_k^\alpha=c J_k^\alpha R $ splits into
\begin{align}
       R_{11}J_{11}^\alpha   &=
      c  J_{11}^\alpha R_{11} +
      c J_{12}^\alpha R_{21},
      \label{equ7.3}
      \\
       R_{21}J_{11}^\alpha &=
       c J_{22}^\alpha R_{21},
      \label{equ7.4}
      \\
      R_{21}J_{12}^\alpha +
       R_{22}J_{22}^\alpha &=
      c J_{22}^\alpha R_{22},
      \nonumber
      \\
       R_{11}J_{12}^\alpha   +
       R_{12} J_{22}^\alpha  &=
       c J_{11}^\alpha R_{12} +
       c J_{12}^\alpha R_{22}.
       \nonumber
\end{align}
It is clear that $J_{22}^\alpha$ is a nilpotent operator on $X_k$
and consequently $J_{22}^{\alpha k}=0$. Therefore one derives from
\eqref{equ7.4} that $
 R_{21}J_{11}^{\alpha k}=c  J_{22}^{\alpha k}R_{21}=\mathbb O
$. It follows that $R_{21}=\mathbb O$ since  $\ran
J_{11}^{\alpha k} $ is dense in $W_{p,0}^k[0,1]$. Now equation
\eqref{equ7.3} takes the form $ R_{11}J_{11}^{\alpha}=c
J_{11}^{\alpha}R_{11} $, that is, $R_{11}$ intertwines the operators
$J_{11}^\alpha$ and $c J_{11}^\alpha$.

$(1)$  Let $c\not\in\mathbb R_+ $. Then Corollary \ref{mainfor01}
(i) yields $R_{11}=\mathbb O$. Furthermore, since $J^{\alpha k}x^m\in
W_{p,0}^k[0,1]$, $m\in\{0,\dots, k-1\} $, one has
\begin{equation*}
      0=
      R_{11}J_k^{\alpha k}x^m=
      R J_k^{\alpha k}x^m=
      cJ_k^{\alpha k}R x^m.
\end{equation*}
It follows that $Rx^m = 0$ for $m\in \{0,\dots, k-1\}$, hence
$R=\mathbb O$.

$(2)$ Let  $0<c=a^\alpha\leqslant 1$ for some $a>0$. Then
Corollary \ref{mainfor01} (ii) yields
$(R_{11}f)(x)=\frac{d}{dx}\int_0^xr(x-t)f(at)\,dt$, where $r\in L_{p'}[0,1]$. Let us prove that $r\in W_p^k[0,1]$. We
have
\begin{align*}
 a^{\alpha k} (J_k^{\alpha k}R\one)(x)&=(RJ_k^{\alpha
k}\one)(x)= (R_{11}J_k^{\alpha k}\one)(x)\\
 &= \frac{d}{dx}\int_0^xr(x-t)\frac{(at)^{\alpha k}}{\Gamma
(\alpha k+1)}\,dt= a^{\alpha k}(J^{\alpha k}r)(x).
\end{align*}
Hence $r=R\one\in W_p^k[0,1]$.

So, the operator $R_{11}$ defined on $W_{p,0}^k[0,1]$ admits a
continuation $T$ as an operator defined on $W_p^k[0,1]$ by
\begin{equation*}
       T:\ W_p^k[0,1]\rightarrow W_p^k[0,1],
       \ \qquad T:\ f(x)\rightarrow
       \frac{d}{dx}\int_0^xr(x-t)f(at)\,dt.
\end{equation*}
Since $
 T\upharpoonright W_{p,0}^k[0,1]=
 R\upharpoonright W_{p,0}^k[0,1]=
 R_{11}
$ and $
 J_k^{\alpha k}x^m\in W_{p,0}^k[0,1]
$ for $m\in\{0,\dots,k-1\}$, we obtain
\begin{equation*}
     J_k^{\alpha k}Tx^m=
     a^{-\alpha k}TJ_k^{\alpha k}x^m=
     a^{-\alpha k} RJ_k^{\alpha k}x^m=
     J_k^{\alpha k}Rx^m.
\end{equation*}
It follows that $Tx^m=Rx^m$ for $m\in\{0,\dots,k-1\}$. Thus
$R=T$.

$(3)$ Since $c=a^\alpha>1$, Corollary \ref{mainfor01} (ii) yields
\begin{align*}
(R_{11}f)(x)&=\bigl(L_a\frac{d}{dx}(r*f)\bigr)(x)\\
     &=\begin{cases}
             0,                                 & x\in [0,1-a^{-1}], \\
        a^{-1}\frac{d}{dx}\int\limits_{0}^{ax-a+1}
                             r(ax-a+1-t)f(t)\,dt, & x\in [1-a^{-1},1],
     \end{cases}
\end{align*}
where $r\in L_{p'}[0,1]$. Let us prove that $r\in
W_{p,0}^k[0,1]$.
\begin{align*}
a^{\alpha k} (J_k^{\alpha k}R\one)(x)&= (RJ_k^{\alpha
k}\one)(x)= (R_{11}J_k^{\alpha k}\one)(x)\\
&=
\begin{cases}
             0,                                      & x\in [0,1-a^{-1}], \\
        a^{-1}\frac{d}{dx}\int\limits_{0}^{ax-a+1}r(ax-a+1-t)
        \frac{t^{\alpha k}}{\Gamma(\alpha k+1)}  \,dt, & x\in [1-a^{-1},1],
     \end{cases}\\
&= \begin{cases}
    0,                                 & x\in [0,1-a^{-1}], \\
    a^{-1}\frac{d}{dx}(J^{\alpha k+1}r)(ax-a+1),
                                       & x\in [1-a^{-1},1],
\end{cases}\\
&=\begin{cases}
0,                             & x\in [0,1-a^{-1}], \\
(J^{\alpha k}r)(ax-a+1),       & x\in [1-a^{-1},1].
\end{cases}
\end{align*}
Hence
\begin{equation*}
(R\one)(x)=
\begin{cases}
0,                 & x\in [0,1-a^{-1}], \\
r(ax-a+1),         & x\in [1-a^{-1},1].
\end{cases}
\end{equation*}
Since $R\one\in W_p^k[0,1]$, it follows that
$r\in W_{p,0}^k[0,1]$.

So, the operator $R_{11}$ defined on $W_{p,0}^k[0,1]$ admits a
continuation $T$ on $W_p^k[0,1]$ defined by
\begin{equation*}
   (Tf)(x)=
     \begin{cases}
             0,                                 & x\in [0,1-a^{-1}], \\
        a^{-1}\frac{d}{dx}\int\limits_{0}^{ax-a+1}
                             r(ax-a+1-t)f(t)\,dt, & x\in [1-a^{-1},1].
     \end{cases}
\end{equation*}
 Since $
 T\upharpoonright W_{p,0}^k[0,1]=
 R\upharpoonright W_{p,0}^k[0,1]=
 R_{11}
$ and $
 J^{\alpha k}x^m\in W_{p,0}^k[0,1]
$ for $m\in\{0,\dots,k-1\}$, one deduces
\begin{equation*}
     J_k^{\alpha k}Tx^m=
     a^{-\alpha k}TJ_k^{\alpha k}x^m=
     a^{-\alpha k} RJ_k^{\alpha k}x^m=
     J_k^{\alpha k}Rx^m.
\end{equation*}
It follows that $Tx^m=Rx^m$ for $m\in\{0,\dots,k-1\}$. Thus
$R=T$.
\end{proof}
\begin{corollary}\cite[Theorem 3.4]{I.Yu.Domanov and M.M.Malamud} $R\in\{J_k^\alpha\}'$ if and only if
      \begin{equation*}
             (Rf)(x)=\frac{d}{dx}\int_0^xr(x-t)f(t)\,dt=r(0)f(x)+\int_0^xr'(x-t)f(t)\,dt,
             \ \ \ r\in W_p^k[0,1].
                 \end{equation*}
\end{corollary}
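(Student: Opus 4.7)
The plan is to derive this corollary essentially as a direct specialization of Proposition \ref{mainfor1}. For the forward implication ($R\in\{J_k^\alpha\}'$ $\Rightarrow$ representation formula), I observe that commuting with $J_k^\alpha$ is exactly equation \eqref{4.11} with $c=1$. Since $1=1^\alpha$, I am in case (2) of Proposition \ref{mainfor1} with $a=1$. But $L_1$ is the identity operator by the definition \eqref{definitionLa}, so the proposition yields
\begin{equation*}
(Rf)(x)=\frac{d}{dx}\int_0^x r(x-t)f(t)\,dt,\qquad r\in W_p^k[0,1],
\end{equation*}
which is the claimed representation.

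For the reverse implication, I would verify directly that any $R$ of this form commutes with $J_k^\alpha$. The calculation is a standard manipulation of convolutions: writing $J^\alpha f = \frac{x^{\alpha-1}}{\Gamma(\alpha)}*f$ and using associativity and commutativity of convolution on $L_1[0,1]$, one has
\begin{equation*}
J_k^\alpha R f = \tfrac{x^{\alpha-1}}{\Gamma(\alpha)} * \tfrac{d}{dx}(r*f)
= \tfrac{d}{dx}\bigl(\tfrac{x^{\alpha-1}}{\Gamma(\alpha)}*r*f\bigr)
= \tfrac{d}{dx}\bigl(r*\tfrac{x^{\alpha-1}}{\Gamma(\alpha)}*f\bigr)
= R J_k^\alpha f,
\end{equation*}
so the formula indeed defines an element of $\{J_k^\alpha\}'$, provided $R$ is bounded on $W_p^k[0,1]$. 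Boundedness follows from $r\in W_p^k[0,1]$: differentiating $k$ times under the integral, one controls $\|Rf\|_{W_p^k}$ by $\|r\|_{W_p^k}\|f\|_{W_p^k}$ via Young's inequality applied to each convolution term, together with the boundary value $r(0)f(x)$ that appears after integrating by parts once.

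The second equality in the statement is exactly this integration by parts: since $r\in W_p^k[0,1]$ with $k\geqslant 1$, we have $r$ absolutely continuous, so
\begin{equation*}
\frac{d}{dx}\int_0^x r(x-t)f(t)\,dt = r(0)f(x)+\int_0^x r'(x-t)f(t)\,dt,
\end{equation*}
which is a routine computation and requires no further justification.

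The only non-routine point in this proof plan is checking boundedness of $R$ on $W_p^k[0,1]$ from the representation; however, as this was already established in Theorem 3.4 of \cite{I.Yu.Domanov and M.M.Malamud}, the corollary can simply be quoted by combining Proposition \ref{mainfor1}(2) (with $a=1$) and that earlier result. Thus I expect no serious obstacle: the whole statement is a clean specialization of Proposition \ref{mainfor1} to the commutant case.
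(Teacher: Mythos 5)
Your proposal is correct and follows essentially the same route as the paper, which presents this statement precisely as the specialization of Proposition \ref{mainfor1}(2) to $c=1$, $a=1$ (where $L_1=\mathbb I$), quoting \cite[Theorem 3.4]{I.Yu.Domanov and M.M.Malamud} for the full equivalence. Your added verification of the converse (commutation via convolution algebra plus boundedness of $R$ on $W_p^k[0,1]$ from $r\in W_p^k[0,1]$) is exactly the routine content the paper delegates to that reference, and your integration-by-parts identity for the second equality is sound.
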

\begin{theorem}\label{pr7.3}
      Suppose
      $
      A_k=\bigoplus_{i=1}^n\lambda_i J_k^\alpha
      $
      is defined on
      $
     X^{(n)}= \bigoplus_{i=1}^n W_p^k[0,1]
      $
      and
\begin{equation*}
\lambda_i=
      \lambda_1/s_i^{\alpha},\qquad 1=
      s_1\leqslant s_2\leqslant \ldots\leqslant s_n,\qquad a_{ij}
      =s_i^{-1}s_j,\qquad 1\leqslant i,j\leqslant
      n.
\end{equation*}
      Then the commutant $\{A_k\}^{\prime}$ is of the form
      \begin{equation*}
             \{A_k\}^{\prime}=
             \{R:\ R=(R_{ij})_{i,j=1}^n,
             \ \ R_{ij}=L_{a_{ij}}K_{ij}\},
      \end{equation*}
      where
\begin{equation*}
      (K_{ij}f)(x)
=\frac{d}{dx}\int\limits_0^xk_{ij}(x-t)f(t)\,dt, \qquad
k_{ij}\in
\begin{cases}
W_p^k[0,1],    & a_{ij}\leqslant 1,\\
W_{p,0}^k[0,1],& a_{ij}>1.
\end{cases}
\end{equation*}
\end{theorem}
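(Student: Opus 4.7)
The plan is to reduce the matrix commutation relation $RA_k=A_kR$ to scalar intertwining equations of the type solved by Proposition \ref{mainfor1}, and then read off the description of each entry. Write any $R\in[X^{(n)}]$ as an operator matrix $R=(R_{ij})_{i,j=1}^n$ with $R_{ij}\in[W_p^k[0,1]]$. Since $A_k=\diag(\lambda_1 J_k^\alpha,\dots,\lambda_n J_k^\alpha)$, the relation $RA_k=A_kR$ splits into the scalar equations
\[
\lambda_j R_{ij}J_k^\alpha=\lambda_i J_k^\alpha R_{ij}, \qquad 1\leqslant i,j\leqslant n,
\]
that is, $R_{ij}J_k^\alpha=c_{ij}J_k^\alpha R_{ij}$ with
\[
c_{ij}=\frac{\lambda_i}{\lambda_j}=\Bigl(\frac{s_j}{s_i}\Bigr)^\alpha=a_{ij}^\alpha>0,
\]
so each entry satisfies an equation of the form treated in Proposition \ref{mainfor1} with positive $c$.

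Next I would apply Proposition \ref{mainfor1} entrywise. Because $c_{ij}=a_{ij}^\alpha>0$, part~(1) is vacuous, and we fall into one of the two remaining cases according to whether $a_{ij}\leqslant 1$ or $a_{ij}>1$. Part~(2) gives, when $a_{ij}\leqslant 1$, a representation $R_{ij}=L_{a_{ij}}K_{ij}$ with $K_{ij}f=\tfrac{d}{dx}\int_0^x k_{ij}(x-t)f(t)\,dt$ and $k_{ij}\in W_p^k[0,1]$, while part~(3) gives, when $a_{ij}>1$, the same structural form but with $k_{ij}\in W_{p,0}^k[0,1]$. By the corollary immediately following Proposition \ref{mainfor1} each such $K_{ij}$ belongs to $\{J_k^\alpha\}'$, so we recover precisely the description stated in the theorem.

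For the converse inclusion it suffices to verify that every operator matrix of the stated form commutes with $A_k$. This reduces, entry by entry, to checking that $(L_{a_{ij}}K_{ij})J_k^\alpha=a_{ij}^\alpha J_k^\alpha(L_{a_{ij}}K_{ij})$, which is essentially the converse side of Proposition \ref{mainfor1} and can be obtained from the explicit formulas there by a direct substitution using $K_{ij}\in\{J_k^\alpha\}'$ and the defining formula \eqref{definitionLa} for $L_{a_{ij}}$.

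The main obstacle is the function-space distinction between the two cases $a_{ij}\leqslant 1$ and $a_{ij}>1$. The operator $L_a$ is bounded on $W_p^k[0,1]$ only when $a\leqslant 1$; for $a>1$ the formula \eqref{definitionLa} produces a function whose values on $[0,1-a^{-1}]$ vanish but whose derivatives at $x=1-a^{-1}$ come from the Taylor data of the argument at $0$, so $L_a$ is bounded only as a map $W_{p,0}^k[0,1]\to W_p^k[0,1]$. Consequently, in the case $a_{ij}>1$ the operator $K_{ij}$ must map $W_p^k[0,1]$ into $W_{p,0}^k[0,1]$, which forces the kernel $k_{ij}$ to satisfy $k_{ij}(0)=\dots=k_{ij}^{(k-1)}(0)=0$, i.e.\ $k_{ij}\in W_{p,0}^k[0,1]$. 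This boundedness considering is precisely what Proposition \ref{mainfor1}~(3) encodes, and once it is accounted for the remaining verification is routine bookkeeping.
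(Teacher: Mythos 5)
Your proposal is correct and follows essentially the same route as the paper: block-decompose $R$, observe that $RA_k=A_kR$ is equivalent to the entrywise relations $R_{ij}J_k^\alpha=a_{ij}^\alpha J_k^\alpha R_{ij}$, and apply Proposition \ref{mainfor1} (parts (2) and (3) according to whether $a_{ij}\leqslant 1$ or $a_{ij}>1$). Your additional remarks on the converse inclusion and on why $k_{ij}$ must lie in $W_{p,0}^k[0,1]$ when $a_{ij}>1$ are consistent with what Proposition \ref{mainfor1} already encodes.
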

\begin{proof}
Let $R=(R_{ij})_{i,j=1}^n$ be the block matrix partition of the
operator $R$ with respect to the direct sum decomposition $
 X^{(n)} =\bigoplus_{i=1}^n W_p^k[0,1]$. Then the equality $RA_k=A_kR$ is
equivalent to the following system
\begin{equation*}
R_{ij} J_k^\alpha =
      \lambda_i\lambda_j^{-1} J_k^\alpha R_{ij}=
      (s_i^{-1}s_j)^\alpha    J_k^\alpha R_{ij}=
      a_{ij}^\alpha           J_k^\alpha R_{ij},
\qquad 1\leqslant i,\ j\leqslant n.
\end{equation*}
To complete the proof it remains to  apply  Proposition
\ref{mainfor1}.
\end{proof}
\begin{theorem}\label{pr7.1}
      Suppose
      $
      A_k=\bigoplus_{j=1}^r\lambda_j J_k^\alpha
      $
      is defined on
      $
      \bigoplus_{j=1}^r W_p^k[0,1]
      $
      and
      $
      \arg\lambda_i\ne \arg\lambda_j\pmod{2\pi}
      $
      for
      $1\leqslant i<j\leqslant r$.
      Then the commutant $\{A_k\}^{\prime}$
      splits,
      that is,
      \begin{equation*}
             \{A_k\}^{\prime}=
             \bigoplus_{j=1}^r\{\lambda_jJ_k^\alpha\}^{\prime}.
      \end{equation*}
\end{theorem}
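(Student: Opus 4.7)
My plan is to reduce the theorem directly to Proposition \ref{mainfor1}, which was established earlier for the single operator $J_k^\alpha$ on $W_p^k[0,1]$. The splitting will be an almost immediate consequence of the entry-by-entry analysis of the commutation relation, exactly as in the proof of Theorem \ref{pr7.3}, but now using the hypothesis $\arg\lambda_i \ne \arg\lambda_j \pmod{2\pi}$ to kill the off-diagonal entries.

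More precisely, I would write any $R \in \{A_k\}'$ as an operator block matrix $R=(R_{ij})_{i,j=1}^{r}$ with respect to the decomposition $\bigoplus_{j=1}^{r} W_p^k[0,1]$. The identity $RA_k = A_k R$ then splits into the $r^2$ scalar intertwining relations
\begin{equation*}
R_{ij} J_k^\alpha \;=\; (\lambda_i/\lambda_j)\, J_k^\alpha R_{ij}, \qquad 1\leqslant i,j \leqslant r.
\end{equation*}
For $i\ne j$ the scalar $c_{ij}:=\lambda_i/\lambda_j$ satisfies $\arg c_{ij}\ne 0 \pmod{2\pi}$ by the hypothesis, so $c_{ij} \notin \mathbb{R}_+$. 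Proposition \ref{mainfor1}(1) then forces $R_{ij}=\mathbb{O}$. For $i=j$ we have $c_{ii}=1=1^\alpha$, so the corresponding relation is precisely $R_{ii} J_k^\alpha = J_k^\alpha R_{ii}$, i.e. $R_{ii}\in \{J_k^\alpha\}' = \{\lambda_i J_k^\alpha\}'$ (multiplication by a nonzero scalar does not change the commutant).

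Combining these two observations gives $R = \bigoplus_{j=1}^{r} R_{jj}$ with $R_{jj}\in \{\lambda_j J_k^\alpha\}'$, which is exactly the inclusion $\{A_k\}' \subseteq \bigoplus_{j=1}^{r}\{\lambda_j J_k^\alpha\}'$. The reverse inclusion is trivial since every block-diagonal operator whose diagonal entries commute with the corresponding $\lambda_j J_k^\alpha$ commutes with $A_k$. Hence equality holds and the commutant splits as claimed.

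The whole argument is essentially a bookkeeping exercise on top of Proposition \ref{mainfor1}; there is no real obstacle. The only subtle point worth double-checking is that Proposition \ref{mainfor1}(1) is stated (and proved) for arbitrary $c\notin\mathbb{R}_+$ on $W_p^k[0,1]$, which is exactly what we need — the Sobolev setting is handled there, not just the $L_p$ setting — so no extra work (such as the Cayley-transform or quasisimilarity tricks used in Section~\ref{Preliminaries}) is required here.
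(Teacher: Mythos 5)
Your proposal is correct and follows essentially the same route as the paper: the authors likewise write $R=(R_{ij})_{i,j=1}^r$ blockwise, extract the relations $R_{ij}J_k^\alpha=\lambda_i\lambda_j^{-1}J_k^\alpha R_{ij}$, and invoke Proposition \ref{mainfor1}(1) to annihilate the off-diagonal blocks (since $\lambda_i\lambda_j^{-1}\notin\mathbb R_+$) and part (2) to identify the diagonal blocks with elements of $\{J_k^\alpha\}'$. Your added remarks --- the explicit reverse inclusion and the observation that Proposition \ref{mainfor1} already lives in the Sobolev setting --- are accurate but do not change the argument.
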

\begin{proof}
Following the proof of Theorem \ref{pr7.3}, one arrives at the
relations
\begin{equation}
R_{ij} J_k^\alpha =
      \lambda_i\lambda_j^{-1} J_k^\alpha R_{ij},\qquad
 1\leqslant i,\ j\leqslant r.
      \label{lambdy}
\end{equation}
The latter results with $i=j$ yield $R_{ii}\in\{J_k^\alpha\}^{\prime}$ for
$i\in\{1,\dots,r\}$, hence by Proposition \ref{mainfor1} (2)
\begin{equation*}
R_{ii}:\
       f\rightarrow\frac{d}{dx}\int_0^xp_{ii}(x-t)f(t)\,dt,\qquad
       r_{ii}\in W_p^k[0,1],\qquad
       i\in\{1,\dots,r\}.
\end{equation*}
Since $\arg\lambda_i\ne \arg\lambda_j\pmod{2\pi}$  $(1\leqslant i<
j\leqslant r)$, it follows that $\lambda_i\lambda_j^{-1}\not\in\mathbb R_+$,
hence by Proposition \ref{mainfor1} (1)  $R_{ij}=0$ $(1\leqslant
i\ne j\leqslant r)$. This completes the proof.
\end{proof}
Combining  Theorems \ref{pr7.3} and \ref{pr7.1}, we arrive at
\begin{theorem}\label{th7.5}
Suppose
     $
     A_k(j):= \bigoplus_{i=1}^{n_j}\lambda_{ji} J_k^\alpha
     $
     is defined on $
     \bigoplus_{i=1}^{n_j}W_p^k[0,1]
     $
      $
     \ j\in\{1,\dots,r\}
     $
     and
     $
     A_k:=\bigoplus_{j=1}^r A(j)
     $
     is defined on
     $
     W=\bigoplus_{j=1}^r(\bigoplus_{i=1}^{n_j}W_p^k[0,1])
     $.
Let also
\begin{align*}
\arg\lambda_{j1}&=\arg\lambda_{ji}\pmod{2\pi},
     &1&\leqslant j \leqslant r,\qquad  1\leqslant i \leqslant n_j,\\
 \arg\lambda_{i1}&\ne \arg\lambda_{j1}\pmod{2\pi},
   &1&\leqslant i<j\leqslant r.
     \end{align*}
      Then
      $$
      \{A_k\}'=\bigoplus_{j=1}^r\{A_k(j)\}',
      $$
where the algebras  $\{A_k(j)\}'$ are described in Theorem \ref{pr7.3}.
\end{theorem}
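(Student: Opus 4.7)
The plan is to combine Theorems \ref{pr7.3} and \ref{pr7.1} via a two-level block decomposition. Writing $W = \bigoplus_{j=1}^r W(j)$ with $W(j) = \bigoplus_{i=1}^{n_j} W_p^k[0,1]$, any bounded operator $R$ on $W$ has a block matrix representation $R = (R_{jj'})_{j,j'=1}^r$ with $R_{jj'} \in [W(j'), W(j)]$, and each $R_{jj'}$ in turn decomposes as $R_{jj'} = (R_{jj';ii'})_{i=1,\dots,n_j;\, i'=1,\dots,n_{j'}}$ with entries in $[W_p^k[0,1]]$. First I would write out what $RA_k = A_k R$ means on this double-indexed block level, obtaining the relations
\begin{equation*}
 R_{jj';ii'} J_k^\alpha = \lambda_{ji}\lambda_{j'i'}^{-1} J_k^\alpha R_{jj';ii'}, \qquad 1 \leqslant j,j' \leqslant r,\ \ 1 \leqslant i \leqslant n_j,\ 1 \leqslant i' \leqslant n_{j'}.
\end{equation*}

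Next, I would split the analysis according to whether $j = j'$ or $j \neq j'$. For $j \neq j'$, the hypothesis $\arg\lambda_{i1} \neq \arg\lambda_{j1} \pmod{2\pi}$ combined with $\arg\lambda_{ji} = \arg\lambda_{j1}$ and $\arg\lambda_{j'i'} = \arg\lambda_{j'1}$ yields
\begin{equation*}
 \arg(\lambda_{ji}\lambda_{j'i'}^{-1}) = \arg\lambda_{j1} - \arg\lambda_{j'1} \not\equiv 0 \pmod{2\pi},
\end{equation*}
so $\lambda_{ji}\lambda_{j'i'}^{-1} \notin \mathbb{R}_+$. By Proposition \ref{mainfor1}(1), every such block $R_{jj';ii'}$ must vanish, which forces $R_{jj'} = \mathbb O$ for all off-diagonal $j \neq j'$.

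For $j = j'$, the remaining diagonal block $R_{jj}$ is an operator on $W(j)$ satisfying $R_{jj} A_k(j) = A_k(j) R_{jj}$, i.e., $R_{jj} \in \{A_k(j)\}'$, which is exactly the situation handled by Theorem \ref{pr7.3}. Conversely, if $R_j \in \{A_k(j)\}'$ for each $j$, then $R := \bigoplus_{j=1}^r R_j$ plainly commutes with $A_k = \bigoplus_{j=1}^r A_k(j)$, giving the reverse inclusion.

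There is no genuine obstacle here: both ingredients are already in place, and the task is purely organizational. The only point demanding care is making sure the block matrix bookkeeping with the \emph{double} indexing $(j,i)$ is set up correctly so that Proposition \ref{mainfor1}(1) can be invoked cleanly on every cross block, and so that the surviving diagonal blocks are recognized as elements of $\{A_k(j)\}'$ in the sense of Theorem \ref{pr7.3} rather than being subjected once more to its proof.
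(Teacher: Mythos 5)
Your proof is correct and follows essentially the same route as the paper, which simply states that Theorem \ref{th7.5} is obtained by combining Theorems \ref{pr7.3} and \ref{pr7.1}: the off-diagonal cross-group blocks are killed by Proposition \ref{mainfor1}(1) exactly as in the proof of Theorem \ref{pr7.1}, and the diagonal blocks are then described by Theorem \ref{pr7.3}. Your double-indexed block bookkeeping is the intended (and correct) way to make that combination explicit.
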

\subsection {The double commutant $\{A_k\}''$}\label{sec777}
\begin{theorem}\label{bicargequal}
Suppose
      $
      A_k=\bigoplus_{i=1}^n\lambda_i J_k^\alpha
      $
      is defined on
      $
      W=\bigoplus_{i=1}^n W_p^k[0,1]
      $
      and
\begin{equation*}
\lambda_i=
      \lambda_1/s_i^{\alpha},\quad 1=
      s_1\leqslant s_2\leqslant \ldots\leqslant s_n,\qquad a_{ij}
      =s_i^{-1}s_j,\qquad   1\leqslant i,j\leqslant
      n.
\end{equation*}
Then
\begin{itemize}
\item[(1)]
\begin{equation*}
\begin{split}
          \{A_k\}^{\prime\prime}=
          \bigl\{&c\mathbb I +R :\ c\in \mathbb C,\ R=\diag (R_1,\dots ,R_n),\
          \ (R_if)(\cdot)=(r_i*f)(\cdot),\\
  &r_i(x)=s_i^{-1} r_1(s_i^{-1}x),\ \  r_i\in
W_p^{k-1}[0,1],\ \ 1\leqslant i \leqslant n\bigr\}.
\end{split}
\end{equation*}
\item[(2)]
      The dimension $d_{k,\alpha}$ of the quotient space
      $\{A_k\}^{\prime\prime}/\Alg A_k$ is
      $
       d_{k,\alpha}= k-1-[(k-1)/\alpha]
      $.
      In particular, $\Alg  A_k=\{A_k\}^{\prime\prime}$
      if and only if either $\alpha=1$ or $k=1$.
\end{itemize}
\end{theorem}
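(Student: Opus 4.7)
The plan for part (1) is to sandwich $\{A_k\}''$ between two explicit descriptions. The standard inclusion $\{A_k\}''\subset\bigoplus_{i=1}^n\{\lambda_iJ_k^\alpha\}''$ (used at the start of the proof of Theorem \ref{NewmannforLp}) combined with the Neumann-Sarason identity $\{J_k^\alpha\}'=\{J_k^\alpha\}''$ from \cite{I.Yu.Domanov and M.M.Malamud} together with the description \eqref{commutantsobolev} forces any $R\in\{A_k\}''$ to be block-diagonal, $R=\diag(R_1,\dots,R_n)$, with $(R_if)(\cdot)=c_if(\cdot)+(r_i*f)(\cdot)$ and $r_i\in W_p^{k-1}[0,1]$.

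To pin down the inter-block relations I would exploit specific off-diagonal elements of $\{A_k\}'$ furnished by Theorem \ref{pr7.3}. For each $i\ge 2$, let $T^{(i)}\in\{A_k\}'$ be the block matrix whose only nonzero entry is $L_{s_i^{-1}}$ at position $(i,1)$; this is admissible because $a_{i,1}=s_i^{-1}\le 1$ allows the constant kernel $k_{i,1}=\one\in W_p^k[0,1]$, which yields $L_{a_{i,1}}\cdot\mathbb I=L_{s_i^{-1}}$. The commutation $RT^{(i)}=T^{(i)}R$ collapses to the single identity $R_iL_{s_i^{-1}}=L_{s_i^{-1}}R_1$. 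Evaluating on $f=\one$ at $x=0$, where $L_{s_i^{-1}}\one=\one$ and the convolution parts vanish, gives $c_i=c_1$; for the convolution part, the substitution $t=s_iu$ in $\int_0^x r_i(x-t)f(s_i^{-1}t)\,dt$ converts it to $s_i\int_0^{s_i^{-1}x}r_i(x-s_iu)f(u)\,du$, which must match $\int_0^{s_i^{-1}x}r_1(s_i^{-1}x-u)f(u)\,du$ for every $f$, forcing $r_i(y)=s_i^{-1}r_1(s_i^{-1}y)$. The converse inclusion is a routine verification via Theorem \ref{pr7.3}.

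For part (2) I would count dimensions. Part (1) parametrises $\{A_k\}''$ by pairs $(c,r_1)\in\mathbb C\times W_p^{k-1}[0,1]$, while Theorem \ref{AlgProp2} parametrises $\Alg A_k$ by the same pairs subject to $r_1^{(l)}(0)=0$ for $l\in\{0,\dots,k-2\}\setminus\{m\alpha-1:1\le m\le [(k-1)/\alpha]\}$ (in the regime $2\le k\le\alpha+1/p$ this collapses to $r_1\in W_{p,0}^{k-1}[0,1]$, consistent with $[(k-1)/\alpha]=0$). Hence the quotient is isomorphic to the space of boundary values $\{r_1^{(l)}(0)\}$ at the $(k-1)-[(k-1)/\alpha]$ excluded indices, and $d_{k,\alpha}=0$ exactly when $k-1=[(k-1)/\alpha]$, which under our standing assumptions happens iff $k=1$ or $\alpha=1$.

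The main obstacle will be the convolution calculation that extracts $r_i(y)=s_i^{-1}r_1(s_i^{-1}y)$ together with the separate handling of the Sobolev constant term $c_i$ (which has no counterpart in the pure $L_p$ argument of Theorem \ref{NewmannforLp}): one must decouple the two contributions by exploiting that the identity part of $R_i$ persists at $x=0$ while the convolution part vanishes there.
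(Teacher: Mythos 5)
Your proposal matches the paper's proof in all essentials: both obtain the diagonal form $R=\diag(R_1,\dots,R_n)$ with $(R_if)=c_if+r_i*f$, $r_i\in W_p^{k-1}[0,1]$, from the inclusion $\{A_k\}''\subset\bigoplus_i\{\lambda_iJ_k^\alpha\}''$ together with \eqref{commutantsobolev}, both extract $c_1=\dots=c_n$ and $r_i(x)=s_i^{-1}r_1(s_i^{-1}x)$ by commuting $R$ against off-diagonal elements of the commutant supplied by Theorem \ref{pr7.3}, and both prove (2) by the identical dimension count comparing the parametrization of $\{A_k\}''$ by $(c,r_1)\in\mathbb C\times W_p^{k-1}[0,1]$ with that of $\Alg A_k$ from Theorem \ref{AlgProp2}. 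The only cosmetic difference is that the paper tests commutation against a full generating set of $\{A_k\}'$ (including the upper-triangular generators $L_{a_{ij}}J_k^k\otimes E_{ij}$), obtaining the conditions as an equivalence in one pass, whereas you use only the first-column elements $L_{a_{i1}}\otimes E_{i1}$ for necessity and defer the converse to a routine check.
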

\begin{proof}
Let us set
\begin{equation*}
e_i:=(\underbrace{0,\dots,0, 1}_i,0,\dots, 0),\qquad
E_{ij}:=e_i^T e_j,\qquad 1\leqslant i,j\leqslant n.
\end{equation*}
Then Theorem \ref{pr7.3} implies
\begin{equation*}
\begin{split}
\{A_k\}'=\Alg\bigl\{&J_k\otimes E_{ii},\ \ 1\leqslant i\leqslant n;\\
&L_{a_{ij}}\otimes E_{ij},\ 1\leqslant j\leqslant i \leqslant n;\
L_{a_{ij}}J_k^k\otimes E_{ij},\ 1\leqslant i<j\leqslant n\bigr\}.
\end{split}
\end{equation*}
Since $\{\bigoplus_{i=1}^n\lambda_i J_k^\alpha\}''\subset
\bigoplus_{i=1}^n\{\lambda_i J_k^\alpha\}''$, it follows from \eqref{commutantsobolev}
\begin{equation*}
\begin{split}
          \{A_k\}^{\prime\prime}\subset
          \bigl\{T:=(c_1\mathbb I +R_1)\oplus \dots\oplus &(c_n\mathbb I +R_n) :\ c_i\in \mathbb
          C,\\
          &(R_if)(\cdot)=(r_i*f)(\cdot),\ \ r_i\in W_p^{k-1}[0,1]\bigr\}.
\end{split}
\end{equation*}
It is clear that $T(J_k\otimes E_{ii}) =(J_k\otimes E_{ii})T$ for
$i\in\{1,\dots, n\}$. It can easily be checked that
\begin{align}
T(L_{a_{ij}}\otimes E_{ij})&=(L_{a_{ij}}\otimes E_{ij})T,
 &1\leqslant j\leqslant i \leqslant n,\label{bikk1} \\
T(L_{a_{ij}}J_k^k\otimes E_{ij})&=(L_{a_{ij}}J_k^k\otimes
E_{ij})T,  &1\leqslant i<j\leqslant n\label{bikk2}
\end{align}
if and only if $c_1=\dots=c_n$ and $r_i(x)=s_i^{-1}
r_1(s_i^{-1}x)$ for $1\leqslant i\leqslant n$. Indeed,
\eqref{bikk1} and \eqref{bikk2} are equivalent to the first and the second of the following relations
\begin{align*}
(c_j\mathbb I + R_j)L_{a_{ij}}f&=L_{a_{ij}}(c_i\mathbb I +
R_i)f, &f\in W_p^k[0,1],&  &1\leqslant j\leqslant i
\leqslant n,\\
(c_j\mathbb I +
R_j)L_{a_{ij}}J_k^kf&=L_{a_{ij}}J_k^k(c_i\mathbb I + R_i)f,
&f\in W_p^k[0,1],&  &1\leqslant i< j \leqslant n,
\end{align*}
respectively. According to the definition of $L_{a_{ij}}$ (see
\eqref{definitionLa}), we obtain
\begin{equation}
c_jf(a_{ij}x)+\int\limits_0^xr_j(x-t)f(a_{ij}t)\,dt\\
=c_if(a_{ij}x)+\int\limits_0^{a_{ij}x}r_i(a_{ij}x-t)f(t)\,dt\label{bikk12}
\end{equation}
for $f\in W_p^k[0,1]$, $x\in [0,1]$ and $1 \leqslant j\leqslant i
\leqslant n$, and
\begin{equation}
 \begin{split}
&c_j(J_k^kf)(a_{ij}x-a_{ij}+1)+\int\limits_{1-a_{ij}^{-1}}^xr_j(x-t)(J_k^kf)(a_{ij}t-a_{ij}+1)\,dt\\
=&c_i(J_k^kf)(a_{ij}x-a_{ij}+1)+\int\limits_0^{a_{ij}x-a_{ij}+1}r_i(a_{ij}x-a_{ij}+1)(J_k^kf)(t)\,dt
\label{bikk22}
\end{split}
\end{equation}
for $f\in W_p^k[0,1]$, $x\in [1-a_{ij}^{-1},1]$ and $1 \leqslant
j< i \leqslant n$.

After simple computations with  \eqref{bikk12}-\eqref{bikk22},  we
get
\begin{align*}
\int\limits_0^x\bigl[r_j(x-t)-a_{ij}r_i(a_{ij}(x-t))\bigr]f(a_{ij}t)\,dt&=(c_i-c_j)f(a_{ij}x),\\
\int\limits_0^x\left[r_i(x-t)-a_{ij}^{-1}r_j(a_{ij}^{-1}(x-t))\right](J_k^kf)(t)\,dt&=(c_j-c_i)(J_k^kf)(x).
\end{align*}
Now it is easy to see that any of the latter  equations is equivalent
to $c_1=\dots=c_n$ and $r_i(x)=s_i^{-1}
r_1(s_i^{-1}x)$ for $i\in\{1,\dots,n\}$. Thus, $(1)$ is proved.

 $(2)$ It is clear that $W_p^{k-1}[0,1]\approx
W_{p,0}^{k-1}[0,1]\dotplus \myspan\{\frac{x^l}{l!} : l=1,\dots,k-2\}$.
Hence (1) implies that
\begin{equation}
\{A_k\}''\approx \mathbb C^1\dotplus W_p^{k-1}[0,1]\approx \mathbb
C^1\dotplus W_{p,0}^{k-1}[0,1]\dotplus \myspan\Bigl\{\frac{x^l}{l!} :
l=0,\dots,k-2\Bigr\}.\label{algbc1}
\end{equation}
Further, Theorem \ref{AlgProp2} yields $A_k$ is isomorphic
\begin{equation}
\Alg A_k\approx \mathbb C^1\dotplus
W_{p,0}^{k-1}[0,1]\dotplus\myspan\left\{\frac{x^{\alpha m-1}}{(\alpha
m-1)!} : 1\leqslant m\leqslant
\left[\frac{k-1}{\alpha}\right]\right\}.\label{algbc2}
\end{equation}
Combining \eqref{algbc1} with \eqref{algbc2} we easily arrive at $(2)$.
\end{proof}
\begin{theorem}\label{bicargnotequal}
 Suppose
       $
      A_k=\bigoplus_{j=1}^r\lambda_j J_k^\alpha
      $
      is defined on
      $
      \bigoplus_{j=1}^r W_p^k[0,1]
      $
      and
      $
      \arg\lambda_i\ne \arg\lambda_j\pmod{2\pi}
      $
      for
      $1\leqslant i<j\leqslant r$.      Then
\begin{itemize}
\item[(1)]
      $
      \{A_k\}^{\prime\prime}=\bigoplus_{j=1}^r\{J_k^\alpha\}^{\prime\prime}
      $.
\item[(2)]
      The dimension $d_{k,\alpha}$ of the quotient space
      $\{A_k\}^{\prime\prime}/\Alg A_k$ is
      $
      d_{k,\alpha}= rk-1-[(k-1)/\alpha]
      $.
      In particular, $\Alg  A_k=\{A_k\}^{\prime\prime}$ if and only if
      either
\begin{itemize}
      \item[(a)]
      $r=1$ and $\alpha=1$,
      or
      \item[(b)]
      $r=1$ and $k=1$.
\end{itemize}
\end{itemize}
\end{theorem}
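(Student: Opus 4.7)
The proof splits into two pieces, (1) and (2), and I would handle them in order.

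For part (1), the plan is to bootstrap from the splitting of the commutant already established in Theorem \ref{pr7.1}. That theorem gives $\{A_k\}' = \bigoplus_{j=1}^r \{J_k^\alpha\}'$, and in particular the projections $P_j$ onto the $j$-th summand of $\bigoplus_{j=1}^r W_p^k[0,1]$ lie in $\{A_k\}'$. Consequently every $R \in \{A_k\}''$ must commute with each $P_j$, which forces $R = \bigoplus_{j=1}^r R_j$ to be block diagonal. Moreover, for any $C \in \{J_k^\alpha\}'$ the operator with $C$ in the $j$-th slot and zeros elsewhere sits in $\{A_k\}'$, so commutativity with $R$ gives $R_j C = C R_j$, i.e.\ $R_j \in \{J_k^\alpha\}''$. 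The reverse inclusion is immediate, since a direct sum of such $R_j$ commutes with every element of $\bigoplus_j \{J_k^\alpha\}'$. This yields $\{A_k\}'' = \bigoplus_{j=1}^r \{J_k^\alpha\}''$.

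For part (2), the idea is to compute the codimension of $\Alg A_k$ inside $\{A_k\}''$ from the two explicit descriptions already at our disposal. By the identity $\{J_k^\alpha\}'=\{J_k^\alpha\}''$ and formula \eqref{commutantsobolev}, a generic element of $\{A_k\}''$ is parameterised by the tuple $(c_1,\dots,c_r;r_1,\dots,r_r) \in \mathbb C^r \times (W_p^{k-1}[0,1])^r$. Using the decomposition $W_p^{k-1}[0,1] \cong W_{p,0}^{k-1}[0,1] \dotplus \mathbb C^{k-1}$, where the $\mathbb C^{k-1}$-factor records the boundary values $r_j(0),r_j'(0),\dots,r_j^{(k-2)}(0)$, the total boundary data has dimension $r(k-1)$.

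Now I read off from Theorem \ref{AlgProp1} what the algebra $\Alg A_k$ imposes on this data. In both cases $1\le\alpha\le k-1$ and $2\le k\le\alpha+1/p$, the constraint $c_1=\dots=c_r$ contributes codimension $r-1$; the zero-boundary parts $r_j - \sum_{l=0}^{k-2} r_j^{(l)}(0)\frac{x^l}{l!}$ remain free. In the first case the only boundary data allowed to be nonzero are $r_1^{(\alpha m-1)}(0)$ for $1\le m\le \left[\tfrac{k-1}\alpha\right]$, giving $\left[\tfrac{k-1}\alpha\right]$ free complex parameters; in the second case $\left[\tfrac{k-1}\alpha\right]=0$ and all boundary data must vanish. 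Either way the boundary-data contribution to codimension is $r(k-1)-\left[\tfrac{k-1}\alpha\right]$, and adding the $c_j$-constraint gives the claimed $d_{k,\alpha}=rk-1-\left[\tfrac{k-1}\alpha\right]$.

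The final assertion follows by a small arithmetic check: $d_{k,\alpha}=0$ forces $rk \le 1+\left[\tfrac{k-1}\alpha\right] \le k$, which is impossible for $r\ge 2$; hence $r=1$, and then $k-1=\left[\tfrac{k-1}\alpha\right]$ holds exactly when $k=1$ or ($k\ge 2$ and $\alpha=1$), recovering cases (a) and (b). The main conceptual obstacle is in (2), namely keeping careful track of which boundary values at $0$ are free versus constrained in Theorem \ref{AlgProp1}; once those are enumerated, the codimension count is essentially mechanical.
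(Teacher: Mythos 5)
Your proof is correct and follows essentially the same route as the paper: part (1) is the standard fleshing-out of the commutant splitting from Theorem \ref{pr7.1} (which the paper simply cites), and part (2) is the same codimension count obtained by comparing the parameterisations of $\{A_k\}''$ via \eqref{commutantsobolev} and of $\Alg A_k$ via Theorem \ref{AlgProp1}. The paper phrases the count as a difference of dimensions of the finite-dimensional complements of the common subspace $\bigoplus_{j=1}^r W_{p,0}^{k-1}[0,1]$, which is the same bookkeeping of the constants $c_j$ and the boundary values $r_j^{(l)}(0)$ that you carry out.
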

\begin{proof}
$(1)$ is implied by Theorem \ref{pr7.1}. Furthermore, $(1)$ and Theorem
\eqref{AlgProp1} imply  that
\begin{align*}
\{A_k\}''&\approx\bigoplus_{j=1}^r(\mathbb C^1\oplus
W_p^{k-1}[0,1])\\
&\approx \mathbb C^r\
\dotplus\bigoplus_{j=1}^rW_{p,0}^{k-1}[0,1]\dotplus\bigoplus_{j=1}^r
 \myspan\Bigl\{\frac{x^l}{l!} :
l=0,\dots,k-2\Bigr\}\\
\Alg A_k&\approx\mathbb
C^1\dotplus\bigoplus_{j=1}^rW_{p,0}^{k-1}[0,1]\dotplus
\myspan\left\{\frac{x^{\alpha m-1}}{(\alpha m-1)!} : 1\leqslant
m\leqslant \left[\frac{k-1}{\alpha}\right]\right\}.
\end{align*}
Now it is easy to see that
$d_{k,\alpha}=r+r(k-1)-1-\left[\frac{k-1}{\alpha}\right]=rk-1-\left[\frac{k-1}{\alpha}\right]$.
Thus $(2)$ is proved.
\end{proof}
Combining Theorems \ref{bicargequal} and
\ref{bicargnotequal}, we obtain
\begin{theorem} Under the conditions of Theorem \ref{th7.5}, we have
      \begin{equation*}
      \{A_k\}''=\bigoplus_{j=1}^r\{A_k(j)\}'',
      \end{equation*}
where the algebras $\{A_k(j)\}''$ are described in Theorem
\ref{bicargequal}.
\end{theorem}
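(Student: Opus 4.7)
The plan is to deduce the splitting of the double commutant directly from the splitting of the commutant that has already been established in Theorem \ref{th7.5}, and then to invoke Theorem \ref{bicargequal} to identify each summand.

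First I would record the following general fact, which is the engine of the argument. Let $Y=\bigoplus_{j=1}^r Y_j$ be a direct sum of Banach spaces, and suppose $\mathcal{C}\subset [Y]$ is an algebra containing the identity that splits as $\mathcal{C}=\bigoplus_{j=1}^r \mathcal{C}_j$, where each $\mathcal{C}_j\subset [Y_j]$ contains the identity $\mathbb{I}_{Y_j}$. Then the coordinate projections $P_j$ onto $Y_j$ belong to $\mathcal{C}$ (take the identity in the $j$-th factor and zero in the others; this lies in $\mathcal{C}$ since $\mathcal{C}=\bigoplus_j \mathcal{C}_j$ contains all such block-diagonal combinations). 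Consequently, every $R\in\mathcal{C}'$ must commute with each $P_j$, and therefore $R$ is itself block-diagonal, $R=\bigoplus_{j=1}^r R_j$ with $R_j\in[Y_j]$.

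Next I would apply this fact with $\mathcal{C}=\{A_k\}'$, $Y_j=\bigoplus_{i=1}^{n_j} W_p^k[0,1]$, and $\mathcal{C}_j=\{A_k(j)\}'$. Theorem \ref{th7.5} gives the splitting $\{A_k\}'=\bigoplus_{j=1}^r\{A_k(j)\}'$, and each $\mathcal{C}_j$ contains the identity on $Y_j$, so the preceding paragraph applies. Thus every $R\in \{A_k\}''=(\{A_k\}')'$ has the form $R=\bigoplus_{j=1}^r R_j$. For any $C_j\in\{A_k(j)\}'$, the block-diagonal operator $\mathbb{O}\oplus\cdots\oplus C_j\oplus\cdots\oplus \mathbb{O}$ lies in $\{A_k\}'$, and commutativity with $R$ forces $R_j C_j=C_j R_j$. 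Hence $R_j\in\{A_k(j)\}''$ for each $j$, giving the inclusion $\{A_k\}''\subset\bigoplus_{j=1}^r\{A_k(j)\}''$. The reverse inclusion is immediate: if $R_j\in\{A_k(j)\}''$ for every $j$, then $R=\bigoplus_j R_j$ commutes with every $\bigoplus_j C_j\in\bigoplus_j\{A_k(j)\}'=\{A_k\}'$, so $R\in\{A_k\}''$.

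Finally, since each $A_k(j)=\bigoplus_{i=1}^{n_j}\lambda_{ji}J_k^\alpha$ has factors $\lambda_{ji}$ with common argument (i.e.\ $\lambda_{ji}=\lambda_{j1}/s_{ji}^\alpha$ with $s_{ji}>0$), Theorem \ref{bicargequal} gives the explicit description of $\{A_k(j)\}''$, completing the statement. There is no substantive obstacle in this argument; the only point requiring care is the verification that the coordinate projections $P_j$ genuinely lie in $\{A_k\}'$, which is why I would state the general lemma explicitly before applying it.
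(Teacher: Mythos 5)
Your proposal is correct and takes essentially the route the paper intends: the paper offers no explicit argument beyond ``combining Theorems \ref{bicargequal} and \ref{bicargnotequal}'', and the general lemma you isolate (a splitting unital algebra contains the coordinate projections, so its commutant consists of block-diagonal operators and therefore splits) is precisely the step being taken for granted, applied to $\{A_k\}'=\bigoplus_{j=1}^r\{A_k(j)\}'$ from Theorem \ref{th7.5}, with Theorem \ref{bicargequal} then identifying each summand. No gaps.
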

\begin{remark}
Recall that according to celebrated von Neumann theorem
$\{T\}^{\prime\prime}=\Alg T$ whenever $T$ is a normal operator.
B. Sz.-Nagy and C. Foias  \cite{B. Sz.-Nagy and C. Foias
1}-\cite{B. Sz.-Nagy and C. Foias 2} generalized this result
 to the wide class of accretive (disssipative) operators. In particular, this result holds
for the accretive operator $A=J\otimes B$ defined on
$L_2[0,1]\otimes \mathbb C^n$, where $B$ is a diagonal positive
matrix, $B = B^*>0$. By Theorem \ref{AlgProp2} this result remains
 also  valid for non-accretive operator $T:=A_k=J^{\alpha}_k\otimes B$
defined on $\bigoplus_{j=1}^nW_2^k[0,1]$, with  the same $B$ .
\end{remark}
\subsection
{ Invariant subspaces } \label{subsec4.2}
In \cite{I.Yu.Domanov and M.M.Malamud} we proved that  every
subspace invariant under
 $J_k^\alpha$ belongs either to the  "continuous chain"
$\Lat^c J_k^\alpha$ or to the "discrete chain" $\Lat^d J_k^\alpha$.
 It turns out that $\Lat^c J_k^\alpha$ does not depend on $\alpha$:
$\Lat^c J_k^\alpha=Lat^c J_k$ (see \eqref{neweq9}). We proved also
that the description of  $\Lat^d J_k^\alpha$  easily follows from
that of $\Lat J(0,k)^\alpha$. This  description is extracted from
Theorem \ref{theorem4.2}.

In this section we prove that every $A_k$-invariant subspace can
be decomposed into a direct sum of two
 invariant subspaces : the first one belongs to the "continuous part" of $\Lat A_k $ and the second one belongs to the "discrete part" of $\Lat A_k $.
We show also, that "continuous part"  does not depend on $\alpha$.
Moreover, a description of the "discrete part" is deduced from
Theorem \ref{theorem4.2}.

Let  $\chi_s$ stand for the  characteristic function of an
arbitrary nonempty subset $ S\subset\mathbb Z_n:=\{1,\dots, n\}$.
We denote by $P_S$  and $\widehat {P_S}$  the canonical
projections from $ \bigoplus_{j=1}^n W_p^{k_j}[0,1] $  and from $
\bigoplus_{j=1}^n C^{k_j} $ onto $ \bigoplus_{j=1}^n
\chi_{s}(j)W_p^{k_j}[0,1] $ and onto\\ $ \bigoplus_{j=1}^n
\chi_{s}(j)C^{k_j}$, respectively. Next we let
\begin{equation*}
      A_{k,S}:=\bigoplus_{j=1}^n \chi_{s}(j)
      \lambda_j J_{k_j}^\alpha\upharpoonright
      \ran P_S,\ \ \ \
      \widehat {A_{k,S}}:=\bigoplus_{j=1}^n \chi_{s}(j)
      \lambda_j J(0;k_j)^\alpha\upharpoonright \ran \widehat {P_S}
\end{equation*}
and denote by $\pi_S$ the quotient mapping from $\ran P_S $ onto
$\ran\widehat {P_S}$.
\begin{theorem}\label{th4.3}
      Suppose
      $
      A_k=\bigoplus_{j=1}^n\lambda_j J_{k_j}^\alpha
      $
      is defined on
      $
      \bigoplus_{j=1}^n W_p^{k_j}[0,1]
      $
      and
      $
      \arg\lambda_i\ne \arg\lambda_j\pmod{2\pi}
      $
      for
      $1\leqslant i<j\leqslant n$.
      Then $E\in \Lat A_k$ if and only if there exists $ S\subset\mathbb Z_n$
      and $a_1,\dots,a_n\in [0,1]$ such that
      \begin{equation*}
      E=\Lat  A_{k,S}\bigoplus_{j=1}^n\chi_{s^c}(j)E_{a_j,0}^{k_j},
      \end{equation*}
            where
      \begin{equation}
         \Lat  A_{k,S}=
         \bigcup_M\pi_S^{-1}\left\{[M,(\widehat {A_{k,S}})^{-1}M]\
         :\ M\in \Lat  \widehat {A_{k,S}}\upharpoonright\widehat {A_{k,S}}M\right\}
         \label{1111}
      \end{equation}
      and $S^c$ is the complement for $S$ in
      $\mathbb Z_n$ $(S\cup S^c=\mathbb Z_n)$.
        Here $[M,(\widehat {A_{k,S}})^{-1}M]$ is a closed interval
        in the lattice of all subspaces of  $\ran \widehat {P_S}$.
      Each interval satisfies the equation
      \begin{equation}
             \dim (\widehat {A_{k,S}})^{-1}M-\dim M=
             \sum_{j\in S} \min\bigl\{-[-\alpha ],k_j\bigr\}.
             \label{2222}
      \end{equation}
\end{theorem}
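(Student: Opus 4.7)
The plan exploits the upper block-triangular form of $A_k$ with respect to the decomposition
\begin{equation*}
W_p^{k_j}[0,1] = W_{p,0}^{k_j}[0,1] \dotplus X_{k_j}, \qquad X_{k_j} := \myspan\{1,x,\ldots,x^{k_j-1}\}.
\end{equation*}
The lower diagonal block is $A_{k,0} = \bigoplus_j \lambda_j J_{k_j,0}^\alpha$, isometrically equivalent by Corollary \ref{corollary32} to $A = \bigoplus_j \lambda_j J^\alpha$ on $\bigoplus_j L_p[0,1]$; the upper diagonal block is the finite-dimensional nilpotent $\widehat{A_k} = \bigoplus_j \lambda_j J(0;k_j)^\alpha$ on $\bigoplus_j \mathbb{C}^{k_j}$. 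My strategy is to associate with any $E \in \Lat A_k$ a subset $S \subset \mathbb{Z}_n$ of indices that contribute polynomial content to $E$, establish the continuous decoupling on $S^c$, and reduce the analysis on $S$ to the finite-dimensional lattice description of Theorem \ref{theorem4.2}.

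For necessity, fix $E \in \Lat A_k$ and let $\pi$ denote the quotient onto $\bigoplus_j \mathbb{C}^{k_j}$. Define $S$ to be the set of indices $j$ for which the projection of $\pi(E)$ onto the $j$-th factor $\mathbb{C}^{k_j}$ is nonzero. The pivotal step is the claim $W_{p,0}^{k_j}[0,1] \subset E$ for every $j \in S$. Here I would exploit the analog, for varying $k_j$, of the observation in the proof of Theorem \ref{AlgProp1}(i) that, under condition \eqref{equ1.2}, the single-summand operator
\begin{equation*}
M_j := \mathbb O \oplus \dots \oplus (\lambda_j J_{k_j}^\alpha)^{N} \oplus \dots \oplus \mathbb O
\end{equation*}
belongs to $\Alg A_k$ provided $N$ is sufficiently large (so that the corresponding convolution kernel lies in $W_{p,0}^{k_j-1}[0,1]$, forcing the argument-matching constraints to be vacuous). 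Since $E$ is $\Alg A_k$-invariant, applying $M_j$ to a vector $v \in E$ with nonzero $j$-th polynomial component extracts a nonzero vector of $E$ lying in the $j$-th summand inside $W_{p,0}^{k_j}[0,1]$. Invariance of $E$ under the full split commutant $\{A_k\}' = \bigoplus_j \{\lambda_j J_{k_j}^\alpha\}'$ (Theorem \ref{pr7.1}), combined with Lemma \ref{scalL2.1} identifying $J_{k_j,0}^\alpha$ on $W_{p,0}^{k_j}[0,1]$ with $J^\alpha$ on $L_p[0,1]$ and the $\varepsilon$-criterion \eqref{equ5.1} for cyclic vectors, then upgrades this single vector to the full inclusion $W_{p,0}^{k_j}[0,1] \subset E$.

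Granted the claim, $E_{S^c} := E \cap \bigoplus_{j \in S^c} W_p^{k_j}[0,1]$ lies entirely in $\bigoplus_{j \in S^c} W_{p,0}^{k_j}[0,1]$ by definition of $S^c$, is invariant under the corresponding restriction of $A_{k,0}$, and therefore by Corollary \ref{corollary32} combined with the $\Lat$-splitting of Theorem \ref{split} takes the form $E_{S^c} = \bigoplus_{j \in S^c} E_{a_j,0}^{k_j}$ for some $a_j \in [0,1]$. The remaining part $E_S \subset \ran P_S$ contains $\ker \pi_S = \bigoplus_{j \in S} W_{p,0}^{k_j}[0,1]$, so $E_S = \pi_S^{-1}(\pi_S E_S)$ is fully determined by its image in $\ran \widehat{P_S}$; that image is $\widehat{A_{k,S}}$-invariant, so Theorem \ref{theorem4.2} places it in some interval $[M, \widehat{A_{k,S}}^{-1}M]$, establishing \eqref{1111}. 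Formula \eqref{2222} follows from \eqref{eqno(2.4)} after computing $\dim \ker(\lambda_j J(0;k_j)^\alpha) = \min\{-[-\alpha], k_j\}$ (equal to $\min\{\alpha,k_j\}$ for integer $\alpha$; equal to $k_j$ when $\alpha > k_j - 1/p$ forces $J(0;k_j)^\alpha = 0$) and summing over $j \in S$.

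The sufficiency direction is routine: each $E_{a_j,0}^{k_j}$ is $J_{k_j}^\alpha$-invariant by Tsekanovskii's description \eqref{neweq9}; any $\pi_S^{-1}(L)$ with $L \in \Lat \widehat{A_{k,S}}$ is $A_{k,S}$-invariant by the block-triangular structure; and direct sums across $S$ and $S^c$ remain $A_k$-invariant. The principal technical obstacle is the polynomial-content claim, where condition \eqref{equ1.2} is essential: without it the single-summand operators $M_j$ typically fail to lie in $\Alg A_k$ (compare Theorem \ref{AlgProp2} with Theorem \ref{AlgProp1}), so the extraction of single-summand polynomial content is no longer available and the coupling between distinct indices becomes genuinely non-split, as witnessed by the intermediate off-diagonal subspaces appearing in Theorem \ref{pr7.3}.
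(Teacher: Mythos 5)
Your overall architecture --- defining $S$ by polynomial content, peeling off a continuous part over $S^c$, and reducing the part over $S$ to Theorem \ref{theorem4.2} through the quotient $\pi_S$ --- matches the paper's, and your sufficiency direction and the dimension count \eqref{2222} are fine. The necessity direction, however, has two genuine gaps. First, you invoke ``invariance of $E$ under the full split commutant $\{A_k\}'$''. A subspace $E\in\Lat A_k$ is invariant under $\Alg A_k$, not under $\{A_k\}'$; invariance under the commutant is precisely hyperinvariance, and in this very paper $\Hyplat A_k$ is strictly smaller than $\Lat A_k$ (compare Theorem \ref{pr7.2} with Theorem \ref{th4.3}), so this step assumes far too much. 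The conclusion you want ($W_{p,0}^{k_j}[0,1]\subset E$ for $j\in S$) is still reachable, but only via powers of $A_k$ itself: once $M_jv=(0,\dots,g,\dots,0)\in E$ with $g=(\lambda_jJ^\alpha)^N f_j$ not vanishing identically near $0$, Lemma \ref{scalL2.1} and the $\varepsilon$-condition make $g$ a cyclic vector for $J_{k_j,0}^\alpha$, so $\myspan\{A_k^mM_jv:\ m\geqslant 0\}=0\oplus\dots\oplus W_{p,0}^{k_j}[0,1]\oplus\dots\oplus 0\subset E$; no commutant is needed, and none is available.

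Second, and more seriously, you never prove that $E$ decomposes as $E_S\oplus E_{S^c}$. You set $E_{S^c}:=E\cap\bigoplus_{j\in S^c}W_p^{k_j}[0,1]$ and then speak of ``the remaining part $E_S$'', but for an invariant subspace of a direct sum there is in general no remaining part: $E$ may contain vectors coupling the $S$- and $S^c$-components without containing their separate projections, and your intersection can be strictly smaller than $P_{S^c}E$. Establishing $P_{S^c}E\subset E$ (hence $P_SE\subset E$, hence the splitting) is the core of the paper's argument: it considers $F:=\overline{A_k^ME}$ with $M=\max_jk_j$, which is invariant for $A_{k,0}$ and hence, by Corollary \ref{corollary32} and Theorem \ref{split}, equals $\bigoplus_jE_{a_j,0}^{k_j}$ with $a_j=0$ for $j\in S$; the chain $E\supset F\supset E_{S^c}\supset P_{S^c}E$ then forces the decomposition. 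Your single-summand operators could serve the same purpose (applying them for $j\in S^c$ shows that $P_jE$ generates some $E_{a_j,0}^{k_j}\subset E$, whence $P_{S^c}E\subset E$), but some such argument must be supplied; as written, the proof of necessity is incomplete at its central step.
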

\begin{proof}
For every $E\in \Lat  A_k$, we put $j$ in $S:=S_E$ if
$P_jE\not\subset W_{p,0}^{k_j}[0,1]$ and put $j$ in $S^c$
otherwise.
  Next we introduce the subspaces
  $
  E_S:=\myspan  \{A_{k,S}^mP_SE :\ m\geqslant 0\}
  $
  and
  $
  E_{S^c}:=\myspan  \{A_{k,S^c}^mP_{S^c}E :\ m\geqslant 0\}
  \subset\bigoplus_{j=1}^n \chi_{s^c}(j)W_{p,0}^{k_j}[0,1]
  $.
It is clear that $E\subset E_{S}\oplus E_{S^c}$.

Let $ M=\max\limits_{1\leqslant j\leqslant n}k_j $.
  Then the subspace $F:=\overline{A_k^M E}$ is invariant for the operator
  $A_{k,0}:= A_k\upharpoonright\bigoplus_{j=1}^n W_{p,0}^{k_j}[0,1]$
  and, by Theorem \ref{split}, $F=\bigoplus_{j=1}^n E_{a_j,0}^{k_j}$
  for some $a_j\in[0,1]$.
By the construction of $S$, it is clear that $a_j=0$ for $j\in S$ and
hence
\begin{equation}
      F=\Biggl(\bigoplus_{j=1}^n \chi_{s}(j)W_{p,0}^{k_j}[0,1]\Biggr)
      \cup\Biggl(\bigoplus_{j=1}^n \chi_{s^c}(j)E_{a_j,0}^{k_j}\Biggr).
      \label{equ4.2}
\end{equation}
  It is clear that $E\supset F\supset E_{S^c}$.
Hence ${E}\supset P_{S^c}E$ and, therefore, $E\supset P_{S}E$.
  The latter inclusion yields $E\supset E_{S}$ and consequently
  $E$ splits : $E=E_{S}\oplus E_{S^c}$.

In turn, by Theorem \ref{split}, $E_{S^c}$ splits:
$E_{S^c}=\bigoplus_{j=1}^n \chi_{s^c}(j)E_{a_j,0}^{k_j}$.
  On the other hand, combining \eqref{equ4.2} with the relations
  $E=E_{S}\oplus E_{S^c}\supset F$, one gets
  $E_S\supset\bigoplus_{j=1}^n\chi_{s}(j)W_{p,0}^{k_j}[0,1]$.
Therefore,  $\pi_S(E_S)\in \Lat \widehat {A_S}$.
  Since the quotient map $\pi_S$ establishes a bijective correspondence
  between $E_S\in \Lat  {A_S}$ with
  $E_S\supset\bigoplus_{j\in S} W_{p,0}^{k_j}[0,1]$ and
  $\pi_S(E_S)$, one derives $E_S=\pi_S^{-1}(\pi_SE_S)$.
One completes the proof by applying Theorem \ref{theorem4.2}.
  Furthermore,  relations \eqref{1111}  and \eqref{2222} are implied
  by the relations \eqref{eqno(2.3)} and \eqref{eqno(2.4)}, respectively.
\end{proof}
\begin{corollary}\cite{I.Yu.Domanov and M.M.Malamud}\label{scalth2.5.}
      Let $\pi$ be the quotient map
      \begin{equation*}
             \pi :\ W_p^k[0,1]\rightarrow X_k:=
             \ W_p^k[0,1]/W_{p,0}^k[0,1]
      \end{equation*}
      and $\widehat {J_k^\alpha}$ be the quotient operator on $X_k$.
      Then
      $\Lat  J_k^\alpha=\Lat ^c J_k^\alpha\cup \Lat ^d J_k^\alpha$,
      where
\begin{itemize}
\item[(a)]
      \begin{equation*}
              \Lat ^c J_k^\alpha=
             \bigl\{E_{a,0}^k :\ 0\leqslant  a\leqslant 1 \bigr\},\ E_{a,0}^k:=
             \bigl\{f\in W_{p,0}^k[0,1]:\ f(x)=0,\
              x\in[0,a]\bigr\}
      \end{equation*}
      is the "continuous part" of $\Lat  J_k^\alpha$;
\item[(b)]
      \begin{equation*}
             \Lat ^d J_k^\alpha=
             \pi^{-1}(\Lat  \widehat {J_k^\alpha})=
             \bigcup_M\pi^{-1}
             \left\{
             [M,(\widehat {J_k^\alpha})^{-1}M]:\
             \ M\in \Lat
             (\widehat {J_k^\alpha}\upharpoonright \widehat {J_k^\alpha} M)
             \right\}
      \end{equation*}
      is the "discrete part" of $\Lat  J_k^\alpha$.

      Here $[M,(\widehat {J_k^\alpha})^{-1}M]$ is a closed interval
      in the lattice of all subspaces of $X_k$.
      Each interval satisfies the equation
      \begin{equation*}
             \dim(\widehat {J_k^\alpha})^{-1}M-\dim M=d,
      \end{equation*}
      where $d=\min\{-[-\alpha],k\}$.
\end{itemize}
\end{corollary}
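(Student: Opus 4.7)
The plan is to derive the corollary as the one-summand specialization of Theorem \ref{th4.3}. Take $n=1$, $k_1=k$, and (without loss of generality) $\lambda_1=1$, so that $A_k=J_k^\alpha$. The condition $\arg\lambda_i\ne\arg\lambda_j\pmod{2\pi}$ for $1\leqslant i<j\leqslant n$ is vacuous in this case, so Theorem \ref{th4.3} applies unconditionally.

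With $n=1$ the index set $\mathbb Z_n=\{1\}$ admits exactly two subsets, and these produce exactly the two pieces of the decomposition. First I would treat $S=\emptyset$: here $S^c=\{1\}$, so Theorem \ref{th4.3} yields $E=\chi_{S^c}(1)E_{a_1,0}^k=E_{a_1,0}^k$ for some $a_1\in[0,1]$. As $a_1$ varies, this sweeps out $\Lat^c J_k^\alpha$ as described in (a); in particular $E_{0,0}^k=W_{p,0}^k[0,1]$ and $E_{1,0}^k=\{0\}$. Next, for $S=\{1\}$, the projection $P_S$ is the identity on $W_p^k[0,1]$, the operator $\widehat{A_{k,S}}$ coincides with the quotient operator $\widehat{J_k^\alpha}$ on $X_k=W_p^k[0,1]/W_{p,0}^k[0,1]$, and $\pi_S=\pi$. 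Formula \eqref{1111} of Theorem \ref{th4.3} then gives exactly the description in (b), namely $E=\pi^{-1}(M)$ with $M$ ranging over the intervals $[M,(\widehat{J_k^\alpha})^{-1}M]$ attached to $M\in\Lat(\widehat{J_k^\alpha}\upharpoonright\widehat{J_k^\alpha}M)$. By construction, these are precisely the $A_k$-invariant subspaces $E$ with $E\supset W_{p,0}^k[0,1]$.

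For the dimension formula I would specialize \eqref{2222}: with $|S|=1$ and $k_1=k$ it reduces to
\begin{equation*}
\dim(\widehat{J_k^\alpha})^{-1}M-\dim M=\min\{-[-\alpha],k\}=d,
\end{equation*}
which is the equation asserted in (b). The dichotomy $\Lat J_k^\alpha=\Lat^c J_k^\alpha\cup\Lat^d J_k^\alpha$ then follows because each $E\in\Lat J_k^\alpha$ corresponds to a unique choice of $S$ according to the rule in the proof of Theorem \ref{th4.3}: $S=\emptyset$ when $E\subset W_{p,0}^k[0,1]$ (continuous case) and $S=\{1\}$ otherwise (discrete case), with the single overlap $E=W_{p,0}^k[0,1]=E_{0,0}^k=\pi^{-1}(\{0\})$ belonging to both families.

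No real obstacle is expected here, since the whole content of the corollary is packaged inside Theorem \ref{th4.3}; the only issue is a careful identification of $\pi_S$, $\widehat{A_{k,S}}$, and the dimension sum in the one-summand case, together with the observation that the hypothesis on the arguments of the $\lambda_j$'s is automatic when $n=1$.
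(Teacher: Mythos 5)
Your specialization of Theorem \ref{th4.3} to $n=1$ (where the argument condition is vacuous, $S=\emptyset$ yields the continuous chain and $S=\{1\}$ yields $\pi^{-1}(\Lat\widehat{J_k^\alpha})$ with the dimension count \eqref{2222} reducing to $\min\{-[-\alpha],k\}$) is correct and is exactly how the paper obtains this statement, which it presents as an immediate corollary of Theorem \ref{th4.3} without further proof.
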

\begin{corollary}\cite{I.Yu.Domanov and M.M.Malamud}
Operator $J_k^\alpha$ is unicellular if and only if either
$\alpha=1$ or $k=1$.
\end{corollary}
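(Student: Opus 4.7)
The plan is to reduce unicellularity of $J_k^\alpha$ to a chain condition on the finite-dimensional quotient operator $\widehat{J_k^\alpha}$ acting on $X_k=W_p^k[0,1]/W_{p,0}^k[0,1]$, and then settle that chain condition by a simple dimension count using the preceding corollary.

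First I would invoke that corollary to split $\Lat J_k^\alpha=\Lat ^c J_k^\alpha\cup\Lat ^d J_k^\alpha$. The continuous part $\Lat ^c J_k^\alpha=\{E_{a,0}^k:a\in[0,1]\}$ is already a chain, and its largest member $E_{0,0}^k=W_{p,0}^k[0,1]=\ker\pi$ is contained in every element of $\Lat ^d J_k^\alpha=\pi^{-1}(\Lat \widehat{J_k^\alpha})$. Hence the continuous part sits entirely below the discrete part in $\Lat J_k^\alpha$, and since $\pi^{-1}$ is an order isomorphism from $\Lat \widehat{J_k^\alpha}$ onto $\Lat ^d J_k^\alpha$, the full lattice $\Lat J_k^\alpha$ is totally ordered if and only if $\Lat \widehat{J_k^\alpha}$ is.

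For sufficiency, if $k=1$ then $\dim X_1=1$ and $\Lat \widehat{J_1^\alpha}$ is trivially a chain. If $\alpha=1$, I would check that $\widehat{J_k}$ sends the coset of $x^m/m!$ to that of $x^{m+1}/(m+1)!$ for $m<k-1$ and to zero for $m=k-1$, so $\widehat{J_k}$ is a single Jordan nilpotent cell of order $k$, and $\Lat \widehat{J_k}$ is the standard chain $\{0\}\subset\ker\widehat{J_k}\subset\ker\widehat{J_k}^{\,2}\subset\dots\subset X_k$.

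For necessity, suppose $k\geqslant 2$ and $\alpha\neq 1$. By the paper's standing hypotheses $\alpha$ is either a positive integer or satisfies $\alpha>k-1/p$; since $k\geqslant 2$ and $p\in(1,\infty)$ give $k-1/p>1$, combining with $\alpha\neq 1$ forces $-[-\alpha]\geqslant 2$. Hence the dimension jump $d=\min\{-[-\alpha],k\}$ from the preceding corollary is at least $2$. Applying that corollary with $M=\{0\}$ yields $\dim\ker\widehat{J_k^\alpha}=d\geqslant 2$, so $\ker\widehat{J_k^\alpha}$ contains two distinct one-dimensional subspaces; these are automatically $\widehat{J_k^\alpha}$-invariant but mutually incomparable, so $\Lat \widehat{J_k^\alpha}$ is not a chain and $J_k^\alpha$ is not unicellular. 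The only step that needs any real care is verifying in the $\alpha=1$ case that the induced quotient operator is a single Jordan cell; the rest is bookkeeping from the interval description in the preceding corollary.
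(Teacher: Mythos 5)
Your proposal is correct and follows essentially the route the paper intends: the statement is presented as an immediate consequence of the lattice description in the preceding corollary (the splitting of $\Lat J_k^\alpha$ into the continuous chain and the discrete part $\pi^{-1}(\Lat\widehat{J_k^\alpha})$, together with the dimension-jump formula $d=\min\{-[-\alpha],k\}$), and your reduction to whether $\Lat\widehat{J_k^\alpha}$ is a chain, with the kernel-dimension count $\dim\ker\widehat{J_k^\alpha}=d\geqslant 2$ when $k\geqslant 2$ and $\alpha\ne 1$, is exactly that derivation. The verification that $\widehat{J_k}$ is a single Jordan cell for $\alpha=1$ is routine and your treatment of both admissible ranges of $\alpha$ (integer, or $\alpha>k-\tfrac 1p$) is accurate.
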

\begin{example}\label{ex4.4}
    Suppose that the operator
    $
    A=\lambda_1J_{k_1}^\alpha\oplus \lambda_2J_{k_2}^\alpha
    $
    $(\arg\lambda_1\ne \arg\lambda_2)\pmod{2\pi}$
is defined on
    $
    W_p^{k_1}[0,1]\oplus W_p^{k_2}[0,1]
    $.
    By Theorem \ref{th4.3}, one has the following description of its lattice of invariant subspaces :
   \begin{align*}
             \Lat  A=
             &\bigcup_{[a_1,a_2]\in [0,1]\times [0,1]}
             (E_{a_1,0}^{k_1}\oplus E_{a_2,0}^{k_2})
             \cup\bigcup_{a\in [0,1]}\pi_{\{1\}}^{-1}
             (\Lat \widehat {A_{\{1\}}})\oplus E_{a,0}^k\\
             &\cup\bigcup_{a\in [0,1]}
             E_{a,0}^k\oplus\pi_{\{2\}}^{-1}(\Lat \widehat {A_{\{2\}}})
             \cup\bigcup\pi_{\{1,2\}}^{-1}(\Lat \widehat {A_{\{1,2\}}}),
    \end{align*}
    where lattices
      $
      \pi_{\{1\}}^{-1}(\Lat  A_{\{1\}})=\Lat ^d J_{k_1}^\alpha
      $
    and
      $
      \pi_{\{2\}}^{-1}(\Lat  A_{\{2\}})=\Lat ^d J_{k_2}^\alpha
      $
    are described in Corollary \ref{scalth2.5.}.
    For example, if
      $
      k_1=1,\ k_2=2,\ \lambda_1=i,\ \lambda_2=1
      $
    and
      $\alpha=1$,
    one has
      $
      \pi_{\{1\}}^{-1}(\Lat \widehat {A_{\{1\}}})=
      \Lat ^d J_{1}^1=
      W_{p,0}^1[0,1]\cup W_p^1[0,1]
      $,\
      $
      \pi_{\{2\}}^{-1}(\Lat \widehat {A_{\{2\}}})=
      \Lat ^d J_{2}^1=
      W_{p,0}^2[0,1]\cup E_1^2\cup W_p^2[0,1]
      $.
    It is easily seen that
      $
      \widehat {A_{\{1,2\}}}=
      0\oplus J(0;2)
      $,
    hence,
      $
      \widehat {A_{\{1,2\}}}\upharpoonright \ran
      (\widehat {A_{\{1,2\}}}):\ e_3\rightarrow 0
      $
    $($here $\{e_1,e_2,e_3\}$ is the standard basis in $\mathbb C^3)$.
    Thus, by Theorem \ref{theorem4.2},
     \begin{align*}
             \Lat  \widehat {A_{\{1,2\}}}&=
             \bigcup_{M\subset \{e_3\}}
             [M, (\widehat {A_{\{1,2\}}})^{-1}M]=
             [0, \{e_1,e_3\}]\cup[\{e_3\}, \{e_1,e_2,e_3\}]\\
             &=
             \{0\}\cup\bigcup_{\alpha,\beta\in \mathbb C}
             \{\alpha e_1+\beta e_3\}
             \cup\bigcup_{\alpha,\beta\in \mathbb C}
             \{\alpha e_1+\beta e_2,e_3\}
             \cup\{e_1,e_2,e_3\}\\
             &\approx
             \{0\}\cup\bigcup_{\alpha,\beta\in \mathbb C}
             \{(\alpha,\beta x)\}\cup\bigcup_{\alpha,\beta\in \mathbb C}
             \{(\alpha ,\beta),(0,x)\}\cup\{(1,0),(0,1),(0,x)\}.
      \end{align*}
    Hence
      \begin{align*}
             \pi_{\{1,2\}}^{-1}(&\Lat \widehat {A_{\{1,2\}}})=
             \bigl(W_{p,0}^1[0,1]\oplus W_{p,0}^2[0,1]\bigr)\\
             &\cup\bigcup_{\alpha,\beta\in\mathbb C}
             \bigl\{\{f_1,f_2\}:\ f_1\in W_p^1[0,1],f_2\in E_1^2,
             \alpha f_1(0)+\beta f_2'(0)=0\bigr\}\\
             &\cup\bigcup_{\alpha,\beta\in \mathbb C}
             \bigl\{\{f_1,f_2\}:\ f_1\in W_p^1[0,1],f_2\in W_p^2[0,1],
             \alpha f_1(0)+\beta f_2(0)=0\bigr\}\\
             &\cup\bigl(W_{p}^1[0,1]\oplus W_{p}^2[0,1]\bigr).
      \end{align*}
\end{example}
\begin{remark}
\begin{itemize}
     \item[(i)] An alternative description of $\Lat ^d J_k^\alpha$ might be obtained
     from the Halmos description of $\Lat  T$ for $T\in [\mathbb C^n]$
     $($see Theorem \ref{Halmos}$)$.
    \item[(ii)] A quite different proof of the description  of $\Lat  J_k$ has
    been originally obtained  by E.Tsekanovskii \cite{E.R.Tsekanovskii}.
\end{itemize}
\end{remark}
\subsection
{Hyperinvariant subspaces}\label{hyperinvAk}
To present a description of $\Hyplat A_k$ we keep  the notation from
Subsection \ref{subsec4.2}.
\begin{theorem}\label{pr7.4}
      Let the conditions of Theorem \ref{pr7.3} hold. Then
\begin{equation*}
\Hyplat    A_k=
             \bigcup_{S\subset \mathbb Z_n}\{E_{S^c}\oplus E_S\}.
\end{equation*}
Here
\begin{itemize}
\item[(a)]
      "the continuous part" $E_{S^c}$ is of the form
\begin{equation*}
              E_{S^c}=
             \biggl\{\bigoplus_{j=1}^n\chi_{S^c}(j)E_{a_j,0}^k:\ a=\{a_j\}_{j\in S^c}
             \in P(\{s_j\}_{j\in S^c}) \biggr\},
\end{equation*}
where
\begin{align*}
P(\{s_i\}_{i\in S^c}):=P(s_{n_1},\dots,s_{n_{|S^c|}})=
      \Bigl\{(a_{n_1},\ldots,a_{n_{|S^c|}})\in\square_{|S^c|}:\\
      s_{n_j}a_{n_{j+1}}\leqslant s_{n_{j+1}}a_{n_j}\leqslant s_{n_{j+1}}-s_{n_j}+s_{n_j}a_{n_{j+1}},\
      1\leqslant j\leqslant  |S^c|-1
      \Bigr\}.
\end{align*}
\item[(b)]
      "the discrete part" $E_S$ is of the form
      $
      E_S= \bigoplus_{j=1}^n \chi_{S}(j)E_{l_j}^k
      $,
      where $1\leqslant l_j\leqslant k-1$ and
      $l_j\leqslant l_i$ if $s_j\leqslant s_i$ for $1\leqslant i,j\leqslant n$;
\end{itemize}
      In particular, if $\lambda_1=\dots=\lambda_n$, then
      \begin{equation*}
             \Hyplat    A_k=
             \bigcup_{S\subset \mathbb Z_n,\ a\in [0,1],\ 1\leqslant l\leqslant k-1}
             \Biggl\{
              \bigoplus_{j=1}^n\chi_{s}(j) E_l^k
              \bigoplus_{i=j}^n \chi_{s^c}(j)E_{a,0}^k
            \Biggr\}.
      \end{equation*}
\end{theorem}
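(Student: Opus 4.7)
The plan is to imitate the decomposition strategy used in the proof of Theorem \ref{th4.3}, but to replace ``arbitrary invariant subspace'' by ``hyperinvariant subspace'' and to exploit the richer commutant described in Theorem \ref{pr7.3}. Fix $E\in\Hyplat A_k$ and define $S\subset\mathbb Z_n$ by the rule $j\in S$ iff $P_jE\not\subset W_{p,0}^k[0,1]$. Since $W_{p,0}^k[0,1]$ is invariant under $J_k^\alpha$, the subspace $F:=\overline{A_k^kE}$ lies in $\bigoplus_{j=1}^nW_{p,0}^k[0,1]$, is hyperinvariant for $A_{k,0}:=A_k\upharpoonright\bigoplus_jW_{p,0}^k[0,1]$, and via Corollary \ref{corollary32} and Theorem \ref{theorem29} it equals $\bigoplus_{j=1}^n E^k_{a_j,0}$ for a tuple $(a_j)$ belonging to $P(s_1,\dots,s_n)$, with $a_j=0$ for $j\in S$. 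A short argument using the projections $P_j\in\{A_k\}'$ (note that $P_j$ commutes with $A_k$ because $A_k$ is block-diagonal) and the fact that $E\supset F$ then forces the splitting $E=E_{S^c}\oplus E_S$, where $E_{S^c}:=P_{S^c}E\subset\bigoplus_{j\in S^c}W_{p,0}^k[0,1]$ and $E_S:=P_SE\supset\bigoplus_{j\in S}W_{p,0}^k[0,1]$.

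Next I would analyse the \emph{continuous} summand $E_{S^c}$. Since it lives inside $\bigoplus_{j\in S^c}W_{p,0}^k[0,1]$, Lemma \ref{scalL2.1} transports the problem to $\bigoplus_{j\in S^c}L_p[0,1]$, where the restriction of the $A_k$-commutant to this block still contains all operators $L_{a_{ij}}\otimes E_{ij}$ with $i,j\in S^c$ (and their compositions with $\{J^\alpha\}'$). Consequently every hyperinvariant subspace must, after the transport, be hyperinvariant for $\bigoplus_{j\in S^c}\lambda_jJ^\alpha$, and Theorem \ref{theorem29} immediately yields $E_{S^c}=\bigoplus_{j\in S^c}\chi_{S^c}(j)E_{a_j,0}^k$ with $(a_j)_{j\in S^c}\in P(\{s_j\}_{j\in S^c})$, which is exactly assertion (a).

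For the \emph{discrete} summand $E_S$ I would pass to the finite-dimensional quotient. Since $E_S\supset\bigoplus_{j\in S}W_{p,0}^k[0,1]$, the quotient map $\pi_S$ sends $E_S$ to a subspace $\widehat E_S$ of $\bigoplus_{j\in S}\mathbb C^k$ which is hyperinvariant for the induced operator $\widehat A_{k,S}=\bigoplus_{j\in S}\lambda_jJ(0;k)^\alpha$ together with all finite-dimensional shifts inherited from Theorem \ref{pr7.3}. A direct inspection of the induced commutant (the operators $L_{a_{ij}}$ project modulo $W_{p,0}^k$ to honest linear maps $\mathbb C^k\to\mathbb C^k$ sending basis vectors $x^m/m!$ to scalar multiples of themselves) shows that every hyperinvariant subspace of $\widehat A_{k,S}$ inside each $\mathbb C^k$-coordinate must be one of the standard chains $\pi_S(E_{l}^k)$, yielding $E_S=\bigoplus_{j\in S}\chi_S(j)E_{l_j}^k$.

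The step I expect to be the main obstacle is establishing the monotonicity condition ``$l_j\leqslant l_i$ whenever $s_j\leqslant s_i$''. This compatibility condition is not an artefact of each individual coordinate, but a genuine coupling imposed by the off-diagonal part of the commutant. The argument will use the operators $L_{a_{ij}}\otimes E_{ij}\in\{A_k\}'$ of Theorem \ref{pr7.3}: when $s_j\leqslant s_i$ one has $a_{ij}=s_i^{-1}s_j\leqslant 1$, so $L_{a_{ij}}$ is defined on all of $W_p^k[0,1]$ and maps the ``top'' polynomial $x^{l}/l!$ in the $j$-th coordinate to a nonzero polynomial of the \emph{same} degree in the $i$-th coordinate modulo $W_{p,0}^k$. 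Hyperinvariance of $E$ under this operator, combined with the form $E_S=\bigoplus_{j\in S}E_{l_j}^k$ already obtained, forces $l_j\leqslant l_i$, completing the description; the final special case $\lambda_1=\cdots=\lambda_n$ (where $s_1=\cdots=s_n$) is then immediate from the resulting symmetry.
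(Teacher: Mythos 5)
Your overall architecture matches the paper's proof: the coordinate-wise splitting of a hyperinvariant subspace, the separation into a continuous part indexed by $S^c$ and a discrete part indexed by $S$, the reduction of the continuous part to Theorem \ref{theorem29} via the isometric equivalence of Corollary \ref{corollary32}, and --- crucially --- the derivation of the monotonicity constraint $l_j\leqslant l_i$ for $s_j\leqslant s_i$ from the off-diagonal commutant elements $L_{a_{ij}}\otimes E_{ij}$ of Theorem \ref{pr7.3}. That last step is exactly the paper's argument, and it is the only genuinely new point of the theorem relative to the $L_p$ and single-coordinate results; you identified it correctly.

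One step, as written, does not work and should be repaired. You import the $F:=\overline{A_k^kE}$ device from the proof of Theorem \ref{th4.3} and assert that $F$ is \emph{hyperinvariant} for $A_{k,0}$, then identify it via Theorem \ref{theorem29}. You give no argument that $F\in\Hyplat A_{k,0}$ (an operator $C\in\{A_{k,0}\}'$ need not extend to, or be the restriction of, an element of $\{A_k\}'$, and $CA_k^ke$ does not obviously lie in $\overline{A_k^kE}$). Nor can you fall back on mere invariance: in the present hypothesis \eqref{arg=arg} the arguments of the $\lambda_i$ coincide, so $\Lat A_{k,0}$ does \emph{not} split and an invariant subspace of $A_{k,0}$ need not have the form $\bigoplus_jE_{a_j,0}^k$ (the splitting in Theorem \ref{th4.3} relied on condition \eqref{equ1.2}, which fails here). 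The repair is the route the paper takes, and which you already mention in passing: since the coordinate projections $P_j$ lie in $\{A_k\}'$, one gets $E=\bigoplus_jP_jE$ immediately, with each $P_jE$ invariant under the diagonal block $\{J_k^\alpha\}'$ and hence equal to some $E_{a_j,0}^k$ or $E_{l_j}^k$ by the single-operator result $\Hyplat J_k^\alpha=\Lat J_k$; the constraints linking the coordinates (membership of $(a_j)_{j\in S^c}$ in $P(\{s_j\}_{j\in S^c})$ and the monotonicity of the $l_j$) then come solely from the off-diagonal entries $L_{a_{ij}}K_{ij}$. With the $F$-detour deleted and replaced by this direct splitting, your proof coincides with the paper's. (Both you and the paper leave the converse inclusion --- that every subspace of the stated form is invariant under all of $\{A_k\}'$ --- to a routine verification.)
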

\begin{proof}
It is clear that $\Hyplat A_k=\Hyplat
(\bigoplus_{j=1}^n\lambda_jJ_k^\alpha)\subset \bigoplus_{j=1}^n\Hyplat
\lambda_jJ_k^\alpha=\bigoplus_{j=1}^n\Lat\lambda_jJ_k$. Hence if
$E\in \Hyplat A_k$ then  $E=\bigoplus_{j=1}^nE_j$, where $E_j\in \Lat
J_k$. For each $E_j\in \Lat J_k$ $(1\leqslant j\leqslant n)$ we
put $j$ in $S$ if $E_j\in \Lat^dJ_k\backslash W_{p,0}^k[0,1]$ and
put $j$ in $S^c$ otherwise (i.e., if $E_j\in \Lat^cJ_k$). Thus
$E=E_S\oplus E_{S^c}$, where
$E_{S^c}=\bigoplus_{j=1}^n\chi_S(j)E_{a_j,0}^k$ and
$E_S=\bigoplus_{j=1}^n\chi_{S^c}(j)E_{l_j}^k$. Now $E_{S^c}$ is
described in Theorem \ref{theorem29} and Corollary
\ref{corollary32}. Let us prove that
$E_S=\bigoplus_{j=1}^n\chi_{S^c}(j)E_{l_j}^k\in \Hyplat A_k$ if and
only if $l_j\leqslant l_i$ whenever  $s_j\leqslant s_i$ for
$1\leqslant i,j\leqslant n$.

Let $s_j\leqslant s_i$ and $P\in\{A_k\}^{\prime}$ be such that the
block matrix partition of the operator $P$ with respect to the
direct sum decomposition $\bigoplus_{j=1}^nW_p^k[0,1]$ contains the
only non-zero element $P_{ij}:=L_{a_{ij}}$. Then the inclusion
$PE_S\subset E_S$ yields $E_{l_j}=P_{ij}E_{l_j}\subset E_{l_i}$.
So $s_j\leqslant s_i$ yields $E_{l_j}\subset E_{l_i}$ or
$l_j\leqslant l_i$.

The opposite statement may be obtained using routine matrix
calculations, which we omit.
\end{proof}
\begin{theorem}\label{pr7.2}
      Under the conditions of Theorem \ref{pr7.1},
      the lattice $\Hyplat  A_k$ splits :
\begin{equation*}
 \Hyplat   A_k = \bigoplus_{j=1}^n \Hyplat
\lambda_j J_k^\alpha= \bigoplus_{j=1}^n \Lat  J_k.
\end{equation*}
\end{theorem}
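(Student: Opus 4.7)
The plan is to derive the splitting of $\Hyplat A_k$ directly from the splitting of the commutant $\{A_k\}'$ already established in Theorem \ref{pr7.1}, and then to identify the hyperinvariant lattice of each summand with $\Lat J_k$ by exploiting the fact that the commutant of $J_k^\alpha$ does not depend on $\alpha$.

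First I would recall that by Theorem \ref{pr7.1} the commutant splits as $\{A_k\}' = \bigoplus_{j=1}^n \{\lambda_j J_k^\alpha\}'$. In particular, for each $j$ the diagonal projection $P_j$ onto the $j$-th summand of $\bigoplus_{j=1}^n W_p^k[0,1]$ lies in $\{A_k\}'$. Hence, for any $E \in \Hyplat A_k$ one has $P_j E \subset E$ for all $j$, which forces the decomposition $E = \bigoplus_{j=1}^n E_j$ with $E_j := P_j E$. Next, for every $C_j \in \{\lambda_j J_k^\alpha\}'$ the block-diagonal operator $\mathbb O \oplus \cdots \oplus C_j \oplus \cdots \oplus \mathbb O$ belongs again to $\{A_k\}'$ by the commutant splitting, so $C_j E_j \subset E_j$. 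This shows $E_j \in \Hyplat \lambda_j J_k^\alpha$ and therefore $\Hyplat A_k \subset \bigoplus_{j=1}^n \Hyplat \lambda_j J_k^\alpha$.

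For the reverse inclusion, take $E_j \in \Hyplat \lambda_j J_k^\alpha$ for $j \in \{1,\dots,n\}$ and set $E := \bigoplus_{j=1}^n E_j$. Any $C \in \{A_k\}'$ decomposes, by Theorem \ref{pr7.1}, as $C = \bigoplus_{j=1}^n C_j$ with $C_j \in \{\lambda_j J_k^\alpha\}'$, and then $C E = \bigoplus_{j=1}^n C_j E_j \subset \bigoplus_{j=1}^n E_j = E$, so $E \in \Hyplat A_k$. Combining the two inclusions yields $\Hyplat A_k = \bigoplus_{j=1}^n \Hyplat \lambda_j J_k^\alpha$.

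Finally, it remains to identify $\Hyplat \lambda_j J_k^\alpha$ with $\Lat J_k$. Multiplication by the nonzero scalar $\lambda_j$ does not affect the commutant, so $\Hyplat \lambda_j J_k^\alpha = \Hyplat J_k^\alpha$. The description \eqref{commutantsobolev} of $\{J_k^\alpha\}'$ in terms of convolution with $r \in W_p^{k-1}[0,1]$ is independent of $\alpha$; in particular $\{J_k^\alpha\}' = \{J_k\}'$ and hence $\Hyplat J_k^\alpha = \Hyplat J_k$. Since $J_k$ is unicellular on $W_p^k[0,1]$ (as recalled in the Introduction, following \cite{E.R.Tsekanovskii}), every invariant subspace of $J_k$ is automatically hyperinvariant, so $\Hyplat J_k = \Lat J_k$. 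The main subtlety in the argument is the second step above, namely that the full splitting of the \emph{lattice} $\Hyplat A_k$ (not only the containment in a direct sum) requires the stronger fact that the individual commutants embed as summands of $\{A_k\}'$, and this is precisely what Theorem \ref{pr7.1} provides; without it one would only get the easy inclusion $\Hyplat A_k \subset \bigoplus_j \Hyplat \lambda_j J_k^\alpha$.
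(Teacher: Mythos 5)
Your argument is correct, and it supplies precisely the deduction that the paper leaves implicit: Theorem \ref{pr7.2} is stated without proof, as an immediate consequence of the commutant splitting in Theorem \ref{pr7.1}, and your two-inclusion argument (block-diagonal projections and block operators give $\Hyplat A_k\subset\bigoplus_j\Hyplat\lambda_jJ_k^\alpha$; the absence of off-diagonal intertwiners, which is the actual content of Theorem \ref{pr7.1}, gives the reverse inclusion) is the intended route. One small caution: in your closing sentence you attribute the hard direction to the fact that the individual commutants embed as summands of $\{A_k\}'$ -- that embedding is the trivial half; what Theorem \ref{pr7.1} really buys is that $\{A_k\}'$ contains \emph{nothing but} block-diagonal operators, and that is what makes every direct sum of hyperinvariant subspaces hyperinvariant for $A_k$.

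The only step that leans on a principle the paper does not state is the final identification $\Hyplat J_k=\Lat J_k$ via ``unicellular $\Rightarrow$ every invariant subspace is hyperinvariant.'' The conclusion is right, but within this paper the cleaner justification is direct: by \eqref{commutantsobolev} every $R\in\{J_k^\alpha\}'$ has the form $f\mapsto cf+r*f$ with $r\in W_p^{k-1}[0,1]$, and such an operator visibly preserves each $E_{a,0}^k$ and each $E_l^k$ from \eqref{neweq9}--\eqref{equ1.7}; equivalently, this is the case $n=1$ of Theorem \ref{pr7.4}. Your observation that \eqref{commutantsobolev} is independent of $\alpha$, so that $\Hyplat\lambda_jJ_k^\alpha=\Hyplat J_k$, is exactly the point that makes the answer $\Lat J_k$ rather than $\Lat J_k^\alpha$.
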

\begin{remark}\label{rem7.3}
It is well known $($see \cite{J.B.Conway and B. P.Y.Wu}$)$ that
for two bounded operators $T_1$ and $T_2$ the splitting of $\Lat
(T_1\oplus T_2)$ implies the splitting of $\Hyplat  (T_1\oplus
T_2)$. In other words, the relation $ \Lat (T_1\oplus T_2)=\Lat
T_1\oplus \Lat  T_2 $ yields the relation $ \Hyplat  (T_1\oplus
T_2)=\Hyplat   T_1\oplus \Hyplat   T_2 $. Theorem \ref{pr7.2}
demonstrates that the converse implication is not true. Nevertheless the
converse implication  is true for $C_0$ contractions $T_1$ and
$T_2$ defined on  Hilbert space $($\cite{J.B.Conway and B. P.Y.Wu}$)$.
\end{remark}
Summing up Theorems \ref{pr7.4} and \ref{pr7.2}, we obtain
\begin{theorem}\label{th7.6}
      Under the conditions of Theorem \ref{th7.5}, we have
      \begin{equation*}
             \Hyplat   A_k=
             \bigoplus_{j=1}^r\Hyplat A_k(j),
      \end{equation*}
      where the lattices $\Hyplat A_k(j)$
      are described in Corollary \ref{pr7.4}.
\end{theorem}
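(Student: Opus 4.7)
The plan is to derive this result as a direct corollary of the splitting of the commutant already established in Theorem \ref{th7.5}, combined with the description of each individual block $\Hyplat A_k(j)$ obtained in Theorem \ref{pr7.4}. By definition $\Hyplat A_k = \Lat\{A_k\}'$, so once the commutant splits, the hyperinvariant lattice should split along the same decomposition, reducing the problem to computing $\Hyplat A_k(j)$ for each $j$, which is precisely the content of Theorem \ref{pr7.4}.

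More concretely, write the ambient space as $W=\bigoplus_{j=1}^r W(j)$ with $W(j)=\bigoplus_{i=1}^{n_j} W_p^k[0,1]$, and let $P_j\in [W]$ be the canonical projection onto $W(j)$. Since $A_k$ is block-diagonal with respect to this decomposition, each $P_j$ commutes with $A_k$, hence $P_j\in\{A_k\}'$. Given $E\in\Hyplat A_k$, applying each $P_j$ yields $P_jE\subset E$, and together with $\sum_j P_j=\mathbb I$ this forces the splitting $E=\bigoplus_{j=1}^r E_j$ with $E_j:=P_jE\subset W(j)$. For any fixed $j$ and any $R_j\in\{A_k(j)\}'$, the operator $\mathbb O\oplus\dots\oplus R_j\oplus\dots\oplus\mathbb O$ lies in $\bigoplus_j\{A_k(j)\}'=\{A_k\}'$ by Theorem \ref{th7.5}, so applying it to $E$ gives $R_jE_j\subset E_j$. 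Thus $E_j\in\Hyplat A_k(j)$, which proves the inclusion $\Hyplat A_k\subset\bigoplus_{j=1}^r\Hyplat A_k(j)$.

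The reverse inclusion is immediate from the splitting of the commutant: if $E_j\in\Hyplat A_k(j)$ for each $j$ and $E:=\bigoplus_{j=1}^r E_j$, then every $R\in\{A_k\}'$ has the form $R=\bigoplus_{j=1}^r R_j$ with $R_j\in\{A_k(j)\}'$ by Theorem \ref{th7.5}, hence $RE=\bigoplus_{j=1}^r R_jE_j\subset\bigoplus_{j=1}^r E_j=E$, so $E\in\Hyplat A_k$. Combining both inclusions yields the claimed identity, and the individual summands $\Hyplat A_k(j)$ are then read off from Theorem \ref{pr7.4}.

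There is no real obstacle here: the conceptual work has been absorbed into Theorems \ref{pr7.4} and \ref{th7.5}. The only point requiring a brief check is that each canonical projection $P_j$ truly lies in the commutant, which is automatic from the block-diagonal structure of $A_k$; after that, the argument is purely formal. In spirit the proof mirrors the derivation of Theorem \ref{pr7.2} from Theorem \ref{pr7.1}, just at the coarser block level indexed by distinct arguments $\arg\lambda_{j1}$ rather than at the level of individual summands.
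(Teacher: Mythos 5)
Your proof is correct and matches the paper's (implicit) argument: the paper offers no proof beyond ``summing up Theorems \ref{pr7.4} and \ref{pr7.2},'' and the content of that summing-up is exactly your derivation --- the block splitting of $\Hyplat A_k$ follows from the splitting of the commutant in Theorem \ref{th7.5} together with the block-diagonal projections, and each block is then described by Theorem \ref{pr7.4}.
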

\subsection {Cyclic subspaces} \label{sec5}
Some results of this subsection were announced in \cite{I.Yu. Domanov 2}.
First, we present the following simple
\begin{lemma}\label{l5.5}
      Let $A\in [\mathbb C^k]$, $\sigma (A)=\{0\}$ and
      $P_{\ker A^*}$ be the orthoprojection from $\mathbb C^k$ onto $\ker    A^*$.
      Then
\begin{itemize}
\item[(1)]
     $\mu_{A}=\disc A=\dim(\ker A^*)=\dim(\ker A)$;
\item[(2)]
     $E\in \Cyc  A$ if and only if $PE=\ker A^*$.
\end{itemize}
\end{lemma}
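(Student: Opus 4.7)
The plan is to exploit the fact that $\sigma(A)=\{0\}$ forces $A$ to be nilpotent on $\mathbb C^k$, and then to reduce the whole statement to a single Nakayama-style reduction modulo $\ran A$. Concretely, I would first establish the key equivalence
\[
    E\in\Cyc A\ \Longleftrightarrow\ E+\ran A=\mathbb C^k.
\]
The forward direction is immediate because $V:=\myspan\{A^mE:m\geqslant 0\}$ satisfies $V=E+AV$, so $V=\mathbb C^k$ yields $\mathbb C^k=E+A\mathbb C^k=E+\ran A$. For the converse I would argue by iterated substitution: if $E+\ran A=\mathbb C^k$, then applying $A$ and using $AV\subseteq V$ gives $\ran A\subseteq V+\ran A^2$, hence $V+\ran A^2=\mathbb C^k$; inductively $V+\ran A^m=\mathbb C^k$ for all $m$, and nilpotence kills $\ran A^m$ for large $m$, forcing $V=\mathbb C^k$.

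Statement (2) is then a purely geometric reformulation. Since $(\ran A)^{\perp}=\ker A^{*}$, the orthogonal decomposition $\mathbb C^k=\ran A\oplus\ker A^{*}$ shows that $E+\ran A=\mathbb C^k$ is equivalent to $P_{\ker A^{*}}E=\ker A^{*}$, which is exactly the desired condition (I read the ``$PE$'' in the statement as $P_{\ker A^{*}}E$).

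For (1), the equality $\dim\ker A=\dim\ker A^{*}$ is just the finite-dimensional rank-nullity identity $\dim\ker A=k-\dim\ran A=k-\dim\ran A^{*}=\dim\ker A^{*}$. Setting $d:=\dim\ker A^{*}$, the characterisation in (2) immediately gives $\mu_A\geqslant d$ (any subspace projecting onto a $d$-dimensional space has dimension $\geqslant d$) and $\mu_A\leqslant d$ (pick any $d$ vectors in $\mathbb C^k$ whose $P_{\ker A^{*}}$-images form a basis of $\ker A^{*}$). For the $\disc$-part, given an arbitrary $E\in\Cyc A$, I would select $e_1,\dots,e_d\in E$ whose projections $P_{\ker A^{*}}e_j$ form a basis of $\ker A^{*}$; the subspace $E':=\myspan\{e_1,\dots,e_d\}\subset E$ then satisfies (2), so $E'\in\Cyc A$ with $\dim E'\leqslant d$, which, combined with the trivial inequality $\disc A\geqslant\mu_A$, yields $\disc A=\mu_A=d$.

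The main obstacle is the Nakayama-type step establishing $E+\ran A=\mathbb C^k\Rightarrow E\in\Cyc A$; once that reduction modulo $\ran A$ is in hand, both (1) and (2) drop out by elementary linear algebra, and no appeal to Jordan form or to Theorem \ref{theorem4.2} is needed.
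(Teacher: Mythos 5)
Your proof is correct and follows essentially the same route as the paper: both arguments reduce cyclicity to the condition $\myspan\{E,\ran A\}=\mathbb C^k$ via the orthogonal decomposition $\mathbb C^k=\ker A^*\oplus\ran A$, and both establish the converse by the same iterated substitution $\ran A^j=\myspan\{A^jE,\ran A^{j+1}\}$ killed off by nilpotence. The only difference is cosmetic (you phrase the reduction as $E+\ran A=\mathbb C^k$ before translating to the projection condition) plus the fact that you spell out part (1), including the $\disc$ computation, which the paper's proof leaves implicit.
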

\begin{proof}
\emph{Necessity.} Note that $\myspan \{E,\ran A\}\supset\myspan
\{A^jE:\ j\geqslant 0\}$ and $(\mathbb I_k-P_{\ker A^*})E=P_{\ran
A}E\subset \ran A$. Therefore, since $E\in \Cyc  A$, we have
\begin{align*}
\mathbb C^k&=\myspan  \{A^jE :\
j=0,1,\dots,k-1\}\subset \myspan \{P_{\ker
A^*}E,(\mathbb I_k -P_{\ker A^*})E, \ran A\}\\
&=\myspan
\{P_{\ker A^*}E, \ran A\}\subset \myspan
\{\ker A^*, \ran A\}= \ker A^* \oplus \ran A=\mathbb C^k.
\end{align*}
Hence $P_{\ker A^*}E=\ker    A^*$.

\emph{Sufficiency.}
 Let $PE=\ker   A^*$. Then
\begin{equation*}
\mathbb C^k=\myspan  \{P_{\ker A^*}E,\ran A\}\subset \myspan
\{E,(\mathbb I_k-P_{\ker A^*})E,\ran A\}=\myspan \{E,\ran A\}.
\end{equation*}
Applying the operator $A^j$, we obtain
 $\ran A^j=\myspan \{A^jE,\ran A^{j+1}\}$ $(1\leqslant j\leqslant k-1)$.
 Hence
\begin{equation*}
\mathbb C^k=\myspan  \{E,\ran A\}=\myspan \{E,AE,\ran A^2 \}=\dots
=\myspan \{E,\dots,A^{k-1}E\}.
\end{equation*}
It means $E\in \Cyc  A$.
\end{proof}
For every system $
\phi=\{\overrightarrow{\phi_l}\}_1^N$, $\overrightarrow{\phi_l}\in
\mathbb C^n$, we denote by $W(\phi)$ the $n\times N$ matrix
consisting of the columns $ \overrightarrow{\phi_l}:\ W(\phi)=
(\overrightarrow{\phi_1},\dots ,\overrightarrow{\phi_N}) $.
\begin{corollary}\label{cor5.6}
      Suppose that
      $
      A=\bigoplus_{j=1}^n\lambda_j J(0;k_j)^{\alpha}
      $
      is defined on
      $
      \bigoplus_{j=1}^n\mathbb C^{k_j}
      $
      and
      $
      m_j:= \min(-[-\alpha],k_j)
      $ for $1\leqslant j\leqslant n$.
      Then
\begin{itemize}
\item[(1)]
       $\mu_A=\disc A=\sum_{j=1}^n m_j$;
\item[(2)]
        the following system
         \begin{equation*}
\overrightarrow{\phi_l}=
               \mathrm{col}
              (
                \phi_{l11},\dots ,\phi_{l1k_1},
                \phi_{l21},\dots ,\phi_{l2k_2},
                \dots \dots\
                \phi_{ln1},\dots ,\phi_{lnk_n}
              ),\qquad
1\leqslant l\leqslant N
         \end{equation*}
     generates a cyclic subspace for the operator
      $
      A
      $

      if and only if
\begin{itemize}
      \item[(1)] $N\geqslant \sum_{j=1}^n m_j$;

      \item[(2)] the matrix
      $W_0=P_{\ker A^*}W(\phi)$ is of maximal rank,
      that is, $\rank W_0=\sum_{j=1}^n m_j$.
\end{itemize}
\end{itemize}
\end{corollary}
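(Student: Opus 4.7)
The plan is to reduce the statement to a direct application of Lemma \ref{l5.5} on the finite-dimensional space $\bigoplus_{j=1}^n \mathbb{C}^{k_j}$. Since each block $\lambda_j J(0;k_j)^\alpha$ is nilpotent and $\lambda_j\neq 0$, the operator $A$ is nilpotent, so $\sigma(A)=\{0\}$ and Lemma \ref{l5.5} is applicable once we identify $\dim\ker A^\ast$.

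First I would compute $\dim\ker A$. Because $A$ is block-diagonal and each scalar $\lambda_j$ is nonzero,
\begin{equation*}
\ker A \;=\; \bigoplus_{j=1}^n \ker\bigl(J(0;k_j)^\alpha\bigr),
\end{equation*}
so the computation reduces to a single Jordan block. Raising $J(0;k_j)$ to the $\alpha$-th power shifts the canonical basis of $\mathbb{C}^{k_j}$ by $-[-\alpha]$ positions, killing the first $\min(-[-\alpha],k_j)=m_j$ basis vectors, whence $\dim\ker(J(0;k_j)^\alpha)=m_j$. Summing and using $\dim\ker A=\dim\ker A^\ast$ in finite dimensions yields $\dim\ker A^\ast=\sum_{j=1}^n m_j$. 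Part~(1) of the Corollary now follows immediately from Lemma~\ref{l5.5}(1).

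For part~(2) I would translate Lemma \ref{l5.5}(2) into a rank condition on matrices. The lemma says $E:=\myspan\{\overrightarrow{\phi_1},\dots,\overrightarrow{\phi_N}\}\in \Cyc A$ if and only if $P_{\ker A^\ast}E=\ker A^\ast$. But $P_{\ker A^\ast}E$ is precisely the column span of $W_0 = P_{\ker A^\ast}W(\phi) = (P_{\ker A^\ast}\overrightarrow{\phi_1},\dots,P_{\ker A^\ast}\overrightarrow{\phi_N})$, so the equality $P_{\ker A^\ast}E=\ker A^\ast$ amounts to $\rank W_0 = \dim\ker A^\ast = \sum_{j=1}^n m_j$. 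Since the rank of an $n\times N$-type matrix is bounded by the number of columns, this forces $N\geqslant \sum_{j=1}^n m_j$, which is condition~(1) of part~(2); the rank equality itself is condition~(2).

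There is no substantial obstacle: given Lemma \ref{l5.5}, the only concrete calculation needed is $\dim\ker(J(0;k_j)^\alpha)=m_j$, after which both parts of the Corollary are formal consequences. The mild care required is in the ceiling-versus-integer interpretation of $\alpha$: the formula $m_j=\min(-[-\alpha],k_j)$ encodes both the case $\alpha\in\mathbb Z_+$ (a genuine power of a Jordan cell) and the interpretation used in Subsection~\ref{subsec4.2} via the quotient operator $\widehat{A_{k,S}}$, for which the same kernel dimension count applies.
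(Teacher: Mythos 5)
Your proposal is correct and follows exactly the route the paper intends: the paper states this corollary without proof as an immediate consequence of Lemma \ref{l5.5}, and your argument supplies precisely the two missing ingredients (the kernel count $\dim\ker(J(0;k_j)^\alpha)=m_j=\min(-[-\alpha],k_j)$ for each block, and the identification of $P_{\ker A^*}E$ with the column span of $W_0$). Nothing further is needed.
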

\begin{theorem}\label{th5.6}
      Suppose
      $
      A_k=\bigoplus_{j=1}^n\lambda_j J_{k_j}^{\alpha}
      $
      is defined on
      $\bigoplus_{j=1}^n W_p^{k_j}[0,1]$
      and \ $m_j:= \min(-[-\alpha],k_j)$ for $1\leqslant j\leqslant n$.
      Then
\begin{itemize}
\item[(1)]
       $\mu_{A_k}=\disc A_k=\sum_{j=1}^n m_j$;
\item[(2)]
       the system $\{f_l(x)\}_{l=1}^N$ of vectors
      ${f_l(x)}=\{ f_{l1}(x),\dots ,f_{ln}(x)\}$
      generates a cyclic subspace for  $A_k$
      if and only if the following conditions hold
\begin{itemize}
      \item[(i)] $N\geqslant \sum_{j=1}^n m_j$;

      \item[(ii)] the matrix
      \begin{equation*}
          W(0)=
            \left(
             \begin{matrix}
             f_{11}(0) & f_{21}(0) & \dots & f_{N1}(0)         \\
             f_{11}'(0)&     f_{21}'(0) &\dots &f_{N1}'(0)     \\
             \vdots     & \vdots     &  & \vdots             \\
             f_{11}^{(m_1-1)}(0) & f_{21}^{(m_1-1)}(0) &
             \dots & f_{N1}^{(m_1-1)}(0)                       \\
             \vdots     & \vdots     &  & \vdots             \\
             f_{1n}(0) & f_{2n}(0) & \dots & f_{Nn}(0)         \\
             f_{1n}'(0)& f_{2n}'(0)& \dots & f_{Nn}'(0)        \\
             \vdots     & \vdots     & & \vdots             \\
             f_{1n}^{(m_n-1)}(0) & f_{2n}^{(m_n-1)}(0) &
             \dots & f_{Nn}^{(m_n-1)}(0)                       \\
           \end{matrix}
          \right)\
      \end{equation*}
      is of maximal rank, i.e., $\rank W(0)=\sum_{j=1}^n m_j$.
\end{itemize}
\end{itemize}
\end{theorem}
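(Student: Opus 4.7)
The plan is to reduce to the finite-dimensional quotient of $W := \bigoplus_{j=1}^n W_p^{k_j}[0,1]$ modulo the $A_k$-invariant subspace $W_0 := \bigoplus_{j=1}^n W_{p,0}^{k_j}[0,1]$, apply the finite-dimensional Corollary~\ref{cor5.6}, and then fill in the continuous part. First I would show that $W_0$ is $A_k$-invariant and that the jet map $\pi : W \to \bigoplus_{j=1}^n \mathbb{C}^{k_j}$, $f \mapsto (f_j^{(i)}(0))_{j,i}$, intertwines $A_k$ with $\widehat{A_k} := \bigoplus_{j=1}^n \lambda_j J(0;k_j)^\alpha$ (for non-integer $\alpha > k_j - 1/p$ the induced quotient factor is zero, in which case the identification $m_j = k_j$ still produces the correct kernel dimension). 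Next I would verify that $\ker \widehat{A_k}^*$ consists, in each block, of the first $m_j = \min(-[-\alpha], k_j)$ jet coordinates, so that the projection $P_{\ker \widehat{A_k}^*}$ applied to the jet matrix $W(\pi(E))$ literally produces the matrix $W(0)$ displayed in the statement.

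With this identification in hand, necessity is immediate: if $E = \myspan\{f_l\}_{l=1}^N \in \Cyc A_k$, surjectivity of $\pi$ gives $\pi(E) \in \Cyc \widehat{A_k}$, and Corollary~\ref{cor5.6} delivers both $N \geq \sum_j m_j$ and $\rank W(0) = \sum_j m_j$. Conversely, assuming (i) and (ii), the same corollary yields $\pi(E) \in \Cyc \widehat{A_k}$, whence
\begin{equation*}
\overline{\myspan\{A_k^m E : m \geq 0\}} + W_0 = W.
\end{equation*}
The genuinely nontrivial step, and what I expect to be the main obstacle, is to upgrade this to the inclusion $W_0 \subset \overline{\myspan\{A_k^m E : m \geq 0\}}$, irrespective of the configuration of arguments $\{\arg \lambda_j\}$ --- in contrast to what happens on $\bigoplus_j L_p[0,1]$ (Theorems~\ref{th5.3} and~\ref{th4.1Lp}), where coincidences of arguments inflate the multiplicity.

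To settle this inclusion I would fix $M$ large enough that $A_k^M f_l \in W_0$ for every $l$ and translate the tail of the orbit to $\bigoplus_j L_p[0,1]$ via the componentwise isometry of Lemma~\ref{scalL2.1}; the operator becomes $A = \bigoplus_j \lambda_j J^\alpha$ and the translated vectors take the explicit form $\lambda_j^M J^{M\alpha - k_j} f_{l,j}$ in the $j$-th slot. By letting $M$ vary and forming Vandermonde-type linear combinations in $\lambda_j^M$, I would extract a sub-system of $\sum_j m_j$ vectors on which the matrix appearing in Theorem~\ref{th4.1Lp} has maximal $\srank$; the point is that the rank condition (ii) on the jets at $0$ is exactly what makes this Vandermonde/$\srank$ computation succeed, so the jet data at $0$ ``compensates'' for any coincidences of $\arg \lambda_j$. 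Once cyclicity is so characterized, the equalities $\mu_{A_k} = \disc A_k = \sum_j m_j$ follow quickly: the lower bound $\mu_{A_k} \geq \sum_j m_j$ is forced by (i); the upper bound is witnessed by the polynomial system with $x^i/i!$ in the $j$-th slot for $0 \leq i \leq m_j - 1$, $1 \leq j \leq n$; and $\disc A_k \leq \sum_j m_j$ follows by selecting $\sum_j m_j$ columns of $W(0)$ whose span equals the column space and observing that the corresponding sub-system still satisfies (ii).
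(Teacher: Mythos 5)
Your proposal is correct and follows essentially the same route as the paper: reduce modulo $W_0=\bigoplus_j W_{p,0}^{k_j}[0,1]$ to the nilpotent quotient $\widehat{A_k}$, apply Corollary~\ref{cor5.6}, and then show that the closed orbit $F$ with $\pi F$ equal to the whole quotient must contain $W_0$. The paper compresses that last inclusion into ``one gets that $F\supset W_0$'' (by the method of Theorem~\ref{th4.3}), whereas you make it explicit via the isometry of Lemma~\ref{scalL2.1} and the $\srank$ criterion of Theorems~\ref{th4.1Lp} and~\ref{th5.3}, with the zero-order jet rows of $W(0)$ (necessarily independent once $\rank W(0)=\sum_j m_j$) supplying the nonvanishing leading coefficients of the relevant $\ast$-minors --- exactly the computation the paper leaves to the reader.
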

\begin{proof}
It is clear that $E\in \Cyc A_k$ implies $\pi E\in \Cyc \widehat
{A_k}$. To prove the converse assertion we choose a subspace
$E\subset \bigoplus_{j=1}^n W_p^{k_j}[0,1]$ such that $\pi E\in \Cyc
\widehat {A_k}$ and denote by $F:=\myspan  \{A^jE:\ j\geqslant
0\}$. Since $ \pi F=\bigoplus_{j=1}^n\mathbb C^{k_j}$, one gets that
$ F\supset \bigoplus_{j=1}^nW_{p,0}^{k_j}[0,1]$. Therefore, just in
the same way as in Theorem \ref{th4.3}, we obtain that $
F=\pi^{-1}(\pi F)=\pi^{-1}(\bigoplus_{j=1}^n\mathbb C^{k_j})=
\bigoplus_{j=1}^n W_p^{k_j}[0,1]$, that is, $E\in \Cyc A_k$. To
complete the proof it suffices to apply Corollary \ref{cor5.6}.
\end{proof}
\begin{remark}\label{rem1}
     For $\alpha=1$ and $k_1=\dots = k_n=:k\geqslant 1$, that is, for the operator
     $
     A_k=\bigoplus_{j=1}^n\lambda_jJ_{k}
     $,
     Theorem \ref{th5.6} has been established in \cite{I.Yu. Domanov}
     by another method.

     We emphasize that the description of the
     set $\Cyc  A_{k,0}$ essentially differs from that of $\Cyc
     A_k$.
Namely, in contrast to the operator $A_{k,0}$,
     the description of the set $\Cyc  A_k$ does not depend
     on the choice of $\lambda_j$.
\end{remark}
Summing up, we obtain a  description of the  cyclic subspaces for
the operator $ A=\bigoplus_{j=1}^m\lambda_j
J_{k_j}^\alpha\oplus\bigoplus_{j=m+1}^n \lambda_jJ_{k_j,0}^\alpha
$ acting on the mixed space $ \bigoplus_{j=1}^m
W_p^{k_j}[0,1]\oplus \bigoplus_{j=m+1}^n W_{p,0}^{k_j}[0,1]$.
\begin{theorem}\label{th5.7}
Suppose that the operators
\begin{equation*}
          A_k(1):=\bigoplus_{j=1}^m\lambda_j J_{k_j}^\alpha
         ,\ \
          A_{k,0}(1):=\bigoplus_{j=1}^m\lambda_j J_{k_j,0}^\alpha
\end{equation*}
and
\begin{equation*}
          A_{k,0}(2):=\bigoplus_{j=m+1}^n\lambda_j J_{k_j,0}^\alpha
          ,\ \text{ and }\
          A:=A_k(1)\oplus A_{k,0}(2)
\end{equation*}
      are defined on
\begin{equation*}
         X(1):=\bigoplus_{j=1}^m W_p^{k_j}[0,1]
         ,\ \
          X_{0}(1):=\bigoplus_{j=1}^m W_{p,0}^{k_j}[0,1]
\end{equation*}
and
\begin{equation*}
          X_{0}(2):=\bigoplus_{j=m+1}^n W_{p,0}^{k_j}[0,1]
         ,\ \text{ and } \
         X:=X(1)\oplus X_{0}(2),\ \
\end{equation*}
respectively. Furthermore, let $P(1)$ be the canonical projection from
$X=X(1)\oplus X_{0}(2)$ onto $X(1)$. Then
\begin{itemize}
\item[(1)]
       $\mu_{A}=\max \{\mu_{A_k(1)},\mu_{A_{k,0}(1)\oplus
       A_{k,0}(2)}\}$;
\item[(2)]
       $E\in \Cyc A$ if and only if
\begin{itemize}
       \item[(a)] $ P(1)E\in \Cyc A_k(1)$,
       \item[(b)] $\overline{A^ME}\in\Cyc(A_{k,0}(1)\oplus
        A_{k,0}(2))$,
      where $M:=\max\limits_{1\leqslant j\leqslant m}k_j$.
      Furthermore,  the set
      $
      \Cyc(A_{k,0}(1)\oplus A_{k,0}(2))
      $
      is described in Theorems \ref{th4.1Lp} and
       \ref{th5.3}  and the
      set  $\Cyc A_k(1)$  is described in Theorem \ref{th5.6}.
\end{itemize}
\end{itemize}
      \end{theorem}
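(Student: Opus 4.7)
The plan is to prove the cyclicity criterion (2) first and then deduce the spectral multiplicity (1) from it. The structural fact underlying both parts is that the high power $A^M$ sends all of $X$ into the $A$-invariant subspace $Y:=X_{0}(1)\oplus X_{0}(2)$, on which $A$ coincides with $B:=A_{k,0}(1)\oplus A_{k,0}(2)$. Indeed, the standing hypothesis on $\alpha$ (either $\alpha\in\mathbb Z_+\setminus\{0\}$ with $\alpha M\geqslant M=\max_{j\leqslant m}k_j$, or $\alpha>k_j-1/p$) gives $J_{k_j}^{\alpha M}(W_p^{k_j}[0,1])\subset W_{p,0}^{k_j}[0,1]$ for each $j\leqslant m$, whence $A_k(1)^M(X(1))\subset X_0(1)$, and clearly $A^M(X_0(2))\subset X_0(2)$. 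Moreover $P(1)A=A_k(1)P(1)$ and $A^M$ has dense range in $Y$, since by Lemma \ref{scalL2.1} and Corollary \ref{corollary32} each of the summands $\lambda_j J_{k_j}^{\alpha M}$ and $\lambda_j J_{k_j,0}^{\alpha M}$ pulls back to a power of the Volterra operator on $L_p$, which has dense range.

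For the necessity in (2), assume $E\in \Cyc A$, i.e.\ $F:=\overline{\myspan\{A^nE:n\geqslant 0\}}=X$. Since $P(1)$ is continuous and surjective, approximating any $y\in X(1)\subset X$ by elements of $\myspan\{A^nE\}$ and projecting yields $y$ as a limit of elements of $P(1)\myspan\{A^nE\}=\myspan\{A_k(1)^nP(1)E\}$, giving (a). For (b), the inclusion $A^ME\subset Y$ together with the commutation above gives
\[
\overline{\myspan\{B^nA^ME:n\geqslant 0\}}=\overline{\myspan\{A^{n+M}E\}}\supset\overline{A^MX}=Y,
\]
where we used density of the range of $A^M$ in $Y$ noted above.

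For sufficiency, assume (a) and (b), put $F:=\overline{\myspan\{A^nE:n\geqslant 0\}}$, and observe that (b) gives $Y\subset F$ directly. Decomposing any $y\in\myspan\{A^nE\}$ as $y=P(1)y+(I-P(1))y$ with $(I-P(1))y\in X_0(2)\subset Y\subset F$ shows that $P(1)y=y-(I-P(1))y\in F$; hence $P(1)\myspan\{A^nE\}\subset F$. Closedness of $F$ together with (a) then yields $X(1)=\overline{\myspan\{A_k(1)^nP(1)E\}}\subset F$, so $F\supset X(1)+Y=X$ and $E\in\Cyc A$.

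For (1), the inequality $\mu_A\geqslant\max\{\mu_{A_k(1)},\mu_B\}$ follows immediately from (2), because any $E\in\Cyc A$ satisfies $\dim E\geqslant\dim P(1)E\geqslant\mu_{A_k(1)}$ and $\dim E\geqslant\dim\overline{A^ME}\geqslant\mu_B$. The reverse inequality is the principal obstacle: setting $d:=\max\{\mu_{A_k(1)},\mu_B\}$ and (say) $p:=\mu_{A_k(1)}\leqslant q:=\mu_B=d$, one picks $g_1,\dots,g_p\in X(1)$ cyclic for $A_k(1)$ via Theorem \ref{th5.6} and $h_1,\dots,h_q\in Y$ cyclic for $B$ via Theorem \ref{th4.1Lp} combined with Corollary \ref{corollary32}, and forms $E=\myspan\{f_i\}_{i=1}^{q}$ with $f_i=g_i+h_i$ for $i\leqslant p$ and $f_i=h_i$ otherwise. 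That $P(1)E\in\Cyc A_k(1)$ then follows from Theorem \ref{th5.6}, because $P(1)h_i\in X_0(1)$ contributes only zero rows to the matrix $W(0)$ and so does not affect its rank. The more delicate verification is $\overline{A^ME}\in\Cyc B$: one isolates the $X_0(1)$- and $X_0(2)$-contributions of $B^nA^Mf_i=A_{k,0}(1)^n A_k(1)^M g_i\oplus A_{k,0}(2)^{n+M}h_i^{(2)}$, then exploits the density of $A_k(1)^M(X(1))$ in $X_0(1)$ and the cyclicity of $\{h_i\}$ for $B$ to recover all of $Y$ in the closure.
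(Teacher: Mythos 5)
The paper states Theorem \ref{th5.7} without proof, so there is no argument of the authors to compare against; I therefore assess your proposal on its own terms. Your proof of part (2) is correct and complete: the three structural observations — $A^MX\subset Y:=X_0(1)\oplus X_0(2)$, $A^M$ has dense range in $Y$, and $A\upharpoonright Y=B:=A_{k,0}(1)\oplus A_{k,0}(2)$ — are all verified under the standing hypotheses on $\alpha$, and both implications of the cyclicity criterion are argued soundly. The inequality $\mu_A\geqslant\max\{\mu_{A_k(1)},\mu_B\}$ then follows as you say.

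The gap is in the reverse inequality of (1). You assert that for \emph{arbitrary} cyclic systems $\{g_i\}_{1}^{p}$ for $A_k(1)$ and $\{h_i\}_{1}^{q}$ for $B$, the sums $f_i=g_i+h_i$ generate a cyclic subspace, and you sketch the verification of (b) by ``isolating'' the $X_0(1)$- and $X_0(2)$-contributions. This cannot work: the closed span of the sums $A_k(1)^{n+M}g_i+B^{n+M}h_i$ is not the span of the two separate families, and your displayed formula for $B^nA^Mf_i$ already drops the term $A_{k,0}(1)^{n+M}h_i^{(1)}$ — which is exactly where the interference occurs. Concretely, take $n=2$, $m=1$, $k_1=k_2=1$, $\alpha=1$, $\lambda_1=\lambda_2=1$, so that $A=J_1\oplus J_{1,0}$, $p=\mu_{A_k(1)}=1$, $q=\mu_B=2$. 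Choose $g_1=\one$ and $h_1=(0,x)$, $h_2=(x,x^2/2)$; under the isometry of Corollary \ref{corollary32} the system $\{h_1,h_2\}$ becomes $\{(0,\one),(\one,x)\}$, whose $*$-determinant is $0*x-\one*\one=-x$, so $\{h_1,h_2\}$ is a legitimate cyclic system for $B$. Yet $f_1=(\one,x)$, $f_2=(x,x^2/2)$ give $Af_1=(x,x^2/2)$ and $Af_2=(x^2/2,x^3/6)$, which transform to $(\one,x)$ and $(x,x^2/2)$ with $*$-determinant $\one*(x^2/2)-x*x=x^3/6-x^3/6=0$; by your own part (2), this $E$ is \emph{not} cyclic for $A$. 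So the construction fails for admissible input data (while a compatible choice, e.g.\ $h_2=(x,0)$, does work). The real content of the upper bound is precisely the proof that a compatible choice of $\{h_i\}$ relative to $\{g_i\}$ always exists, which requires working with the $\srank$ criteria of Theorems \ref{th4.1Lp}, \ref{th5.3} and \ref{th5.6} rather than generic cyclic systems. A secondary omission: you treat only the case $\mu_{A_k(1)}\leqslant\mu_B$; the opposite case needs its own (similar but not identical) construction.
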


  We express our gratitude
to Professor Pei Yuan Wu for giving precise references concerning
Theorem \ref{Halmos}. We are also grateful to the referee for a
number of helpful suggestions for improvement in the article.

I.Yu. Domanov\\
Institute of Applied Mathematics and Mechanics\\
Roza-Luxemburg 74\\
Donetsk 83114\\
Ukraine\\
domanovi$@$yahoo.com

M.M. Malamud\\
Institute of Applied Mathematics and Mechanics\\
Roza-Luxemburg 74\\
Donetsk 83114\\
Ukraine\\
mmm$@$telenet.dn.ua
\end{document}